\newtheorem{theorem}{Theorem}[section]
\newtheorem{lemma}[theorem]{Lemma}
\newtheorem{corollary}[theorem]{Corollary}
\newtheorem{proposition}[theorem]{Proposition}
\theoremstyle{definition} 
\newtheorem{definition}[theorem]{Definition}
\newtheorem{remark}[theorem]{Remark}
\newtheorem{example}[theorem]{Example}
\title{Absolute profinite rigidity and hyperbolic geometry}
\author{M. R. Bridson}
\author{D. B. McReynolds}
\author{A. W. Reid}
\author{R. Spitler}
\address{\newline Mathematical Institute, 
\newline Andrew Wiles Building,
\newline University of Oxford,
\newline Oxford OX2 6GG, UK}
\email{ bridson@maths.ox.ac.uk}
\address{\newline Department of Mathematics,
\newline Purdue University
\newline West Lafayette, IN 47907, USA}
\email{dmcreyno@purdue.edu}
\address{\newline Department of Mathematics,
\newline Rice University, 
\newline Houston, TX 77005, USA}
\email{ alan.reid@rice.edu}
\address{\newline Department of Mathematics,
\newline McMaster University
\newline Hamilton, Ontario, Canada}
\email{spitlerr@mcmaster.ca}
\def\-{\overline}
\def\wh{\widehat}
\def\g{\gamma}
\def\G{\Gamma}
\def\H{\mathbb{H}}
\def\Z{\mathbb{Z}}
\def\R{\mathbb{R}}
\def\Q{\mathbb{Q}} 
\def\F{\mathbb{F}} 
\def\C{\mathbb{C}} 
\def\Out{{\rm{Out}}}
\def\Mod{{\rm{Mod}}}
\def\ab{\rm{ab}}
\def\mod{\rm{mod}}
\DeclareMathOperator{\SL}{SL} 
\DeclareMathOperator{\PSL}{PSL}
\DeclareMathOperator{\PGL}{PGL}
\DeclareMathOperator{\PSU}{PSU}
\def\Epi{\rm{Epi}}
\def\Isom{\rm{Isom}}
\def\qed{ $\sqcup\!\!\!\!\sqcap$}
\def\tr{\mbox{\rm{tr}}\, }
\def\P{\mbox{\rm{P}}}
\def\G{\Gamma}
\def\Gsis{\Gamma_s}
\def\Lsis{L_s}
\def\Deltasis{\Delta_s}
\def\<{\langle}
\def\>{\rangle}
\def\ilim{\varprojlim}
\def\ns{\vartriangleleft}
\def\Epi{{\rm{Epi}}}
\def\onto{\twoheadrightarrow}
\newcommand{\innp}[1]{\left< #1 \right>}
\newcommand{\abs}[1]{\left\vert#1\right\vert}
\newcommand{\set}[1]{\left\{#1\right\}}
\DeclareMathOperator{\Aut}{Aut}
\begin{document}

\date{}

\begin{abstract} 
We construct arithmetic Kleinian groups that are profinitely rigid in the absolute sense: each is distinguished from all other finitely generated, residually finite groups by its set of finite quotients. The Bianchi group $\PSL(2,\Z[\omega])$ with $\omega^2+\omega+1=0$ is rigid in this sense. Other examples include the non-uniform lattice of minimal co-volume in ${\rm{PSL}}(2,\C)$ and the fundamental group of the Weeks manifold (the closed hyperbolic $3$--manifold of minimal volume).
\end{abstract}


\subjclass[2010]{57M50, 20E18, 20H10, 11F06}
\keywords{Profinite completion, rigidity, hyperbolic 3--orbifold, hyperbolic 3--manifold, Bianchi group, Weeks manifold}
\maketitle

\section{Introduction} \label{intro}

It is notoriously hard to unravel the nature of a finitely presented group $\G$. In order to do so, one must explore how the group can act on different kinds of objects. The most primitive objects to consider are  finite sets. Actions on finite sets capture only the finite images of groups, so the power of such actions to explain the nature of $\Gamma$ is limited by the answer to the fundamental question: to what extent is $\G$ determined by its set of finite quotients? This compelling question has re-emerged with different emphases throughout the history of group theory, and in recent years it has been animated by a rich interplay with geometry and low-dimensional topology, e.g. \cite{BF}, \cite{BCR}, \cite{BridR}, \cite{BRW}, \cite{Fu}, \cite{He}, \cite{Wilk} and \cite{WZ}.

The finite images of $\G$ are encoded in its profinite completion $\wh{\G}$ (see \S \ref{prelims_profinite}).  If $\G$ has elements that do not survive in any finite quotient, then one cannot hope to recover $\G$ by studying $\wh{\G}$, so it is natural to restrict attention to residually finite groups, i.e.~groups where every finite subset injects into some finite quotient. In its rawest form, the fundamental question then becomes: which finitely generated, residually finite groups $\G$ are {\em profinitely rigid in the absolute sense}, i.e.~have the property that if $\Lambda$ is finitely generated and residually finite, then $\wh{\Lambda}\cong \wh{\G}$ implies ${\Lambda}\cong {\G}$.

Variations on this problem have driven many advances in group theory in recent years but there has been very little progress towards identifying infinite groups that are profinitely rigid in the absolute sense. It is easy to see that finitely generated abelian groups are rigid, but beyond that one immediately struggles; virtually cyclic groups need not be rigid for example \cite{Baum74}. If $\G$ satisfies  a group law  -- for example if it is nilpotent or solvable -- then $\wh{\Lambda}\cong \wh{\G}$ will imply that $\Lambda$ satisfies the same law, provided it is residually  finite. In such cases, the question of absolute profinite rigidity reduces to a question of {\em relative} profinite rigidity, where one asks if $\wh{\G}$ distinguishes $\G$ from all other groups in a restricted class. If the relative context is sufficiently tame, this observation allows one to identify examples of profinitely rigid groups: for example, the free nilpotent group of fixed class on a fixed n
 umber of generators is profinitely rigid (although many other nilpotent groups are not, \cite{GS}).

The pursuit of relative profinite rigidity theorems has provided a focal point for a rich body of research in recent years, particularly in geometric contexts. This includes many settings in which the groups are {\em{full-sized}} in the sense that they contain non-abelian free subgroups. (Note that a full-sized group cannot satisfy a law.) For example, every Fuchsian group can be distinguished from any other lattice in a connected Lie group by its finite quotients \cite{BCR}. Such relative theorems do not lead to absolute profinite rigidity, however, because in the absence of a group law it is extremely difficult to rule out the possible existence of an utterly exotic  $\Lambda$, finitely generated and residually finite, with $\wh{\Lambda}\cong \wh{\G}$. Thus, despite our understanding of Fuchsian groups, it remains unknown whether finitely generated free groups are profinitely rigid in the absolute sense (which is an old question of Remeslennikov \cite[Question 5.48]{Rem}).

In this article we provide the first examples of full-sized groups that are profinitely rigid in the absolute sense. Our examples are fundamental groups of 3--dimensional hyperbolic orbifolds, i.e.~lattices in $\PSL(2,\C)$. Our main result is the following.

\begin{theorem}\label{t:main} 
There exist arithmetic lattices in $\PSL(2,\C)$ that are profinitely rigid in the absolute sense.  
\end{theorem}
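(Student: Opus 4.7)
The plan is to exhibit specific arithmetic lattices $\G < \PSL(2,\C)$---the Bianchi group $\PSL(2,\Z[\omega])$, the minimal co-volume non-uniform lattice, and the fundamental group of the Weeks manifold---and to show each is profinitely rigid in the absolute sense. Fix such a $\G$ and let $\Lambda$ be finitely generated and residually finite with $\wh{\Lambda} \cong \wh{\G}$. The goal is to produce an isomorphism $\Lambda \cong \G$, and the strategy is to transfer representation-theoretic data from $\G$ to $\Lambda$ through the profinite isomorphism and then promote it to a faithful complex representation.

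The first step is to exploit the arithmetic structure of $\G$: it embeds naturally in $\PSL(2,\mathcal{O})$, where $\mathcal{O}$ is the ring of integers of the invariant trace field, and reduction modulo primes yields a compatible family of representations $\G \to \PSL(2,\F_q)$. Because $\wh{\Lambda} \cong \wh{\G}$, each such representation pulls back to a representation of $\Lambda$ with the same image. The decisive step is to assemble these residual representations into a single characteristic-zero representation $\rho \colon \Lambda \to \PSL(2,\C)$. This requires controlling obstructions in Galois cohomology and, in particular, detailed knowledge of the congruence structure of $\G$ (including the congruence kernel when the congruence subgroup property fails, as it does for $\PSL(2,\Z[\omega])$). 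The specific lattices in the theorem are presumably chosen because the invariant trace fields are very small---imaginary quadratic, or a small extension thereof---and so the necessary lifting can be carried out in closed form.

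Once $\rho$ exists, the next step is to verify it is injective. Any $g \in \ker \rho$ would, via the profinite isomorphism, have trivial image in every finite quotient of $\Lambda$ that factors through a congruence quotient of $\G$; combined with residual finiteness and the matching of all finite quotients, this should force $g = 1$. Mostow rigidity then identifies $\rho(\Lambda)$ with $\G$ up to conjugacy in $\PSL(2,\C)$, delivering $\Lambda \cong \G$.

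The main obstacle is the construction of $\rho$. Without any a priori linear structure on $\Lambda$, standard deformation theory is unavailable and the representation must be extracted purely from profinite data. I expect this step to occupy the bulk of the proof and to require the arithmetic-specific ingredients---small invariant trace field, fine control of congruence quotients, and rigidity of the relevant mod-$p$ representations---that distinguish the chosen examples from a generic arithmetic lattice. A secondary difficulty is that one cannot simply assume $\Lambda$ is a 3--manifold group or even a lattice, so all topological or geometric input has to come from $\G$ alone, routed through $\wh{\G} = \wh{\Lambda}$.
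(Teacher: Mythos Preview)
Your high-level shape is right---produce a representation $\rho\colon\Lambda\to\PSL(2,\C)$ landing in $\G$, then argue it is an isomorphism---but the two load-bearing steps are misconceived.

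First, the construction of $\rho$. The paper does \emph{not} assemble mod-$p$ representations via Galois cohomology or wrestle with the congruence kernel. Instead it proves that each chosen $\G$ is \emph{Galois rigid}: the number of conjugacy classes of Zariski-dense representations $\G\to\SL(2,\C)$ equals $n_{K\G}$, the degree of the trace field. For any prime $p$, bounded representations $\Lambda\to\SL(2,\overline{\Q_p})$ correspond bijectively to continuous representations $\wh{\Lambda}\to\SL(2,\overline{\Q_p})$; since $\wh{\Lambda}\cong\wh{\G}$, this transfers the count of Zariski-dense representations from $\G$ to $\Lambda$. Thus $\Lambda$ automatically has a Zariski-dense representation to $\SL(2,\C)$, and the arithmetic constraints (trace field, quaternion algebra, unique type of maximal order) force its image into $\G$ or a small overgroup. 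The congruence kernel never enters; Galois rigidity (which in turn is proved for $\PSL(2,\Z[\omega])$ using the Paoluzzi--Zimmermann classification of its $\PSL(2,\F)$ quotients together with strong approximation) is the decisive ingredient you are missing.

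Second, you have the injectivity/surjectivity argument backwards, and you misapply Mostow. Mostow rigidity does \emph{not} tell you $\rho(\Lambda)=\G$: a priori $\rho(\Lambda)$ could be any Zariski-dense subgroup, of infinite index or of large finite index. The paper first proves $\rho$ is \emph{surjective}, and this is where the bulk of the work lies. One uses 3--manifold topology (``half lives, half dies'' on a compact core) to rule out infinite-index image by producing excess first Betti number in a low-index subgroup, contradicting $\wh{\Lambda}\cong\wh{\G}$. Then a detailed, example-specific analysis of low-index subgroups---in particular locating a specific fibered subgroup (the sister of the figure-eight knot group for $\PSL(2,\Z[\omega])$, a genus-two bundle cover for the Weeks group), computing abelianizations by Magma, and comparing monodromies---forces the index to be $1$. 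Only once $\rho$ is onto does the Hopfian property of the finitely generated profinite group $\wh{\G}$ give injectivity of $\wh{\rho}$, hence of $\rho$. Your proposed injectivity argument (elements of $\ker\rho$ dying in congruence quotients) does not work as stated, and the 3--manifold topology you omit is not optional: it is precisely the part of the argument that fails to generalize beyond Kleinian groups and explains why the paper produces only finitely many examples.
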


These lattices are described in \S \ref{organisation} below, and in more detail in \S \ref{mainplayers}. We provide both uniform and non-uniform examples. For the moment we can construct only finitely many of each, but from these one can manufacture infinitely many commensurability classes of full-sized groups that are profinitely rigid. For instance, in \S \ref{full_sized} we prove that for one of our examples $\G_W$, the group $\G_W\times\Z^r$ is profinitely rigid for every $r\in\mathbb{N}$. Ultimately, one hopes to prove that all Kleinian groups are profinitely rigid, but such a result seems beyond the reach of current technology.

The lattices that we exhibit to prove Theorem \ref{t:main} have several important features in common, some arithmetic, some geometric, and some algebraic. A crucial algebraic characteristic is {\em representation rigidity}. The arguments that we develop around this idea are not specific to dimension $3$, are of a quite general nature, and apply especially in the setting of higher rank lattices. On the other hand, our geometric arguments make very strong use of the subgroup structure of Kleinian groups and $3$--manifold topology, and the corresponding parts of our proof do not extend to lattices in other settings. Indeed, any progress in this direction will require a far better understanding of the infinite index subgroups of general lattices than exists at present.  

We now outline how the strategy of our proof proceeds and describe how the key features of the examples enter the argument. First, representation rigidity: each of the lattices $\G$ that we study has very few irreducible representations into $\SL(2,\C)$ up to conjugacy, and all of the representations arise from the arithmetic structure of the lattice. The explanation for this is different in each case, as is the way in which it is exploited, but it is an indispensable feature of our arguments.  In particular it allows us to resolve the basic concern that an arbitrary finitely generated, residually finite group $\Lambda$ with $\widehat{\Lambda}\cong\widehat{\Gamma}$ may have no interesting representations to $\SL(2,\C)$.  It turns out that one can always construct a useful representation by working one prime at a time. In this way, we relate representations  $\G\to{\rm{SL}}(2,\C)$ to bounded representations  $\widehat{\G}\to{\rm{SL}}(2,\overline{\Q}_p)$. The restrictions of th
 e latter to $\Lambda < \widehat{\Lambda}=\widehat{\G}$ fit together to provide a Zariski-dense representation $\rho\colon\Lambda\to \SL(2,\C)$, and for the lattices that we consider,  one can use constraints coming from the arithmetic of $\G$  to force the image of $\rho$ to lie in $\G$, or perhaps a finite extension of $\G$. For more general lattices, it is difficult to control the images of representations constructed by such local techniques. (See the comments below concerning the $(2,3,7)$--triangle group.)

The arguments up to this point (\S \ref{s:rep}) are those that can be modified so as to apply quite generally; this theme is taken up in \cite{McSpit} and \cite{Spitler}. Thus, for example, one can prove that if  $\Lambda$ is a finitely generated group with the same finite quotients as ${\SL}(n,\Z)$, and $n\ge 3$, then there is a Zariski-dense representation $\Lambda\to {\SL}(n,\Z) < \SL(n,\R)$ that induces an isomorphism of profinite completions. 

Returning to our setting, it is the special way in which arithmetic and geometry dovetail in dimension $3$ that allows us to complete the proof. The arithmetic structure of a lattice $\Gamma<{\rm{PSL}}(2,\C)$ is encoded in its invariant trace-field $K\G = \Q({\rm{tr}}(\gamma^2) \colon \g\in\Gamma)$ and in the quaternion algebra $A\G$ over $K\G$. In all of our examples, $K\G$ is a number field of low degree ($2$ or $3$), $\G$ is closely related to  the unit group of  a maximal order in $A\G$, and the ramification of $A\G$ at finite places of $K\G$ is highly constrained. On the geometric side, in each case, we also exploit special features of the low index subgroups of $\G$, including the fact that the orbifold $\H^3/\G$ has a finite-sheeted covering of small index that is a surface bundle over the circle, explicit features of which are exploited heavily. Mostow Rigidity and volume calculations also play an important role. 

Once we have a Zariski-dense representation $\rho\colon\Lambda\to\G$ in hand, we argue, roughly speaking, that if $\rho$ were not surjective then $\Lambda$ would have a finite quotient that $\G$ does not have, contrary to hypothesis; and since $\wh{\Lambda}\cong\wh{\G}$ is Hopfian, the surjectivity of $\hat\rho\colon\wh{\Lambda} \to \wh{\G}$ implies injectivity, so $\rho$  is an isomorphism. In each case, arguments from classical 3--manifold topology reduce us quickly to the case where the image of $\rho$ has finite index in $\G$, and at that stage bespoke arguments about the low index subgroups of $\G$ and the finite linear quotients of $\G$ are invoked.

We wish to emphasize that our strategy for proving Theorem \ref{t:main} relies on the remarkable fact that the diverse array of arithmetic, geometric and algebraic features needed in the proof can be found in specific examples. It does not extend in an obvious way to any general classes of Kleinian groups. Looking further afield to the case of $\SL(n,\Z)$ with $n\geq 3$, this strategy reduces the issue of profinite rigidity to a question about finite quotients of finitely generated, infinite index, Zariski-dense subgroups of $\SL(n,\Z)$; but at this point we are blocked by how little is known about  the infinite index subgroups of $\SL(n,\Z)$ and the argument goes no further.

It is also interesting to consider how our argument breaks down if one replaces $\G$ by the triangle group $T = \Delta(2,3,7)$, which one also suspects might be profinitely rigid. One again has sufficient control to construct interesting representations of any group $\Lambda$ with $\widehat{\Lambda}\cong\widehat{T}$, but the arithmetic of the invariant trace-field of $T$ imposes less constraint than in the case of the 3--dimensional lattices we consider. Instead of concluding from $\wh{\Lambda}\cong\wh{T}$ that there exists a useful homomorphism $\Lambda\to T$, one has to contend with another possibility arising from the three real places of the invariant trace-field $K$. In this case, $\Lambda$ instead has a Zariski-dense representation to the Hilbert modular group $\PSL(2,R_K)$, about whose infinite index subgroups we again know embarrassingly little. There are, however, hyperbolic triangle groups where one has stricter control over their arithmetic, and in \cite{BMRS2} we 
 shall explain how the methods of the present paper can be extended to prove that certain of these co-compact Fuchsian groups are profinitely rigid in the absolute sense.

\subsection{Organisation of the paper}\label{organisation}

After gathering some basic facts about profinite completions in \S \ref{prelims_profinite} and trace-fields in \S \ref{s:algtraces}, in \S  \ref{s:rep}, we prove the general results that lead to the construction of $\rho\colon\Lambda\to{\rm{PSL}}(2,\C)$ as described above. Theorem \ref{T1} is the key result of a general nature in this section, and Corollary \ref{C1} applies it to the lattices $\G< {\rm{PSL}}(2,\C)$  that are the focus of subsequent sections. These lattices are presented in \S \ref{mainplayers}. Prominent among them is $\Gamma=\PSL(2,\Z[\omega])$, where $\omega$ denotes a primitive cube root of unity, i.e.~$\omega^2 + \omega + 1 = 0$. The orbifold $\H^3/\Gamma$ associated to this Bianchi group is a non-compact orientable arithmetic 3--orbifold, which is a 4--sheeted cover of the non-orientable orbifold of minimal volume. We also  describe all of the lattices that contain $\G$, as well as the Weeks manifold, which is the unique closed hyperbolic 3--manifold of minimal volume. In \S \ref{fewchars} we prove that, up to conjugacy, the only irreducible representations $\PSL(2,\Z[\omega]) \to{\rm{PSL}}(2,\C)$ are the inclusion and its complex conjugate; this is an instance of {\em Galois rigidity}, a concept that plays an important role in \S  \ref{s:rep}. Particular features of the arithmetic of $\Q(\omega)$, the invariant trace-field of $\PSL(2,\Z[\omega])$, also play a key role in \S \ref{fewchars}, as does an extension of Paoluzzi and Zimmermann's detailed analysis \cite{PZ} of epimorphisms $\G\to\PSL(2, \F)$, where $\F$ is a finite field. In \S \ref{s:rigid_bianchi} we prove that $\PSL(2,\Z[\omega])$ is profinitely rigid and in \S \ref{s:others} we prove that all of the lattices that contain it are also rigid. In \S \ref{s:weeks_rigidity} we turn our attention to uniform, torsion-free lattices and prove that the fundamental group of the Weeks manifold is profinitely rigid. 

Several of our proofs rely on computer calculations, implemented via Magma \cite{Bos}. The full details of each of these calculations can be found in the supplementary document \cite{magma_calcs}.

\bigskip


\noindent{\bf{Acknowledgements:}}~We want to acknowledge the great intellectual debt that we owe to Alexander Lubotzky. His seminal results and unparalleled vision underpin many aspects of the modern understanding of profinite completions of discrete groups; we thank him for these insights, as well as many helpful conversations.  We also thank Matt Stover for helpful conversations. During the period that this work was carried out, we benefited from the hospitality of many institutions, amongst which we mention The Hausdorff Institute for Mathematics (Bonn) which saw the final version of this work put in place. We thank all of them. We acknowledge with gratitude the financial support of the Royal Society (MRB), the National Science Foundation (DBM and AWR), and the Purdue Research Foundation (RS).

\section{Preliminaries concerning profinite groups}\label{prelims_profinite}

In this section we gather some results and remarks about profinite groups that we shall need. Let $\G$ be a finitely generated group.  The profinite completion of $\G$ is defined as $\wh{\G} = \ilim \G/N $ where the inverse limit is taken over the normal subgroups of finite index $N\ns\G$ ordered by reverse inclusion. $\wh{\G}$ is a compact, totally disconnected topological group.

The natural  homomorphism $i\colon\G\to\widehat\G$ is injective if and only if $\G$ is residually finite, and the image is always  dense. Hence, the restriction to $\G$ of any continuous epimomorphism from $\widehat{\Gamma}$ to a  finite group is onto.  A deep theorem of Nikolov and Segal \cite{NS} implies that if $\G$ is finitely generated then {\em every} homomorphism from $\widehat{\G}$ to a finite group is  continuous, and so every finite index subgroup of $\wh{\G}$ is open. The following proposition records the correspondence between subgroups of finite index in a discrete group and its profinite completion (see \cite[Prop 3.2.2]{RZ}). Note that we have used \cite{NS} to replace ``open subgroup'' in the profinite setting by ``finite index subgroup''. Given a subset $X$ of a profinite group $G$, we write $\overline X$ to denote the closure of $X$ in $G$.

\begin{proposition}\label{correspondence}
For every finitely generated, residually finite group $\Gamma$, there is a bijection from the set $\mathcal{X}$ of all finite index subgroups of $\Gamma$ to the set $\mathcal{Y}$ of all finite index subgroups of $\widehat{\Gamma}$.  If $\G$ is identified with its image in $\widehat{\Gamma}$, then this bijection takes $\Omega\in \mathcal{X}$ to $\overline{\Omega}$, while its inverse takes $\Lambda\in \mathcal{Y}$ to $\Lambda \cap \Gamma$. Also $[\Gamma : \Omega] = [\widehat{\Gamma}: \overline{\Omega}]$. Moreover, $\overline \Omega$ is normal in $\widehat\G$ if and only if $\Omega$ is normal in $\G$, in which case $\G/\Omega\cong\wh{\G}/\overline \Omega$.
\end{proposition}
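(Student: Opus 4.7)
My plan is to construct the bijection by exhibiting the two maps directly and verifying they are mutual inverses, relying throughout on the density of $\G$ in $\wh\G$ and the universal property of the profinite completion; the Nikolov--Segal theorem cited just above is invoked only to convert ``open subgroup of finite index'' on the profinite side into plain ``finite index''. The linchpin I would establish first is a density lemma: if $\Lambda$ is an open subgroup of $\wh\G$, then $\Lambda\cap\G$ is dense in $\Lambda$, and hence $\overline{\Lambda\cap\G}=\Lambda$. This holds because, for each $x\in\Lambda$ and each open neighborhood $U$ of the identity, the open set $xU\cap\Lambda$ meets the dense subgroup $\G$, and the points it picks out lie in $\Lambda\cap\G$.

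Given $\Omega\in\mathcal{X}$, I would then produce an explicit open model for its closure, as follows. Choose a finite-index normal subgroup $N\ns\G$ contained in $\Omega$ (for instance the normal core), and use the universal property of $\wh\G$ to extend $\G\to\G/N$ to a continuous surjection $\hat q\colon\wh\G\epi\G/N$. Set $\hat\Omega:=\hat q^{-1}(\Omega/N)$, an open subgroup of $\wh\G$ of index $[\G:\Omega]$ satisfying $\hat\Omega\cap\G=\Omega$. Applying the density lemma to $\hat\Omega$ gives $\overline{\Omega}=\overline{\hat\Omega\cap\G}=\hat\Omega$, so $\overline\Omega$ is open, has index $[\G:\Omega]$, and satisfies $\overline\Omega\cap\G=\Omega$.

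Running the correspondence in reverse, Nikolov--Segal guarantees that every finite-index $\Lambda\leq\wh\G$ is open, so the density lemma yields $\overline{\Lambda\cap\G}=\Lambda$; and $\Lambda\cap\G$ has finite index in $\G$ because $g\mapsto g\Lambda$ injects $\G/(\Lambda\cap\G)$ into the finite group $\wh\G/\Lambda$. Together, these computations show that $\Omega\mapsto\overline\Omega$ and $\Lambda\mapsto\Lambda\cap\G$ are mutually inverse index-preserving bijections between $\mathcal{X}$ and $\mathcal{Y}$. For the normality statement, I would take $N=\Omega$ in the construction above when $\Omega\ns\G$, so that $\overline\Omega=\ker\hat q$ is automatically normal in $\wh\G$ with $\wh\G/\overline\Omega\cong\G/\Omega$ induced by $\hat q$; the converse is immediate, since the intersection of a normal subgroup of $\wh\G$ with $\G$ is normal in $\G$. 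There is no genuine obstacle here -- the argument is entirely formal -- and the only point requiring care is the bookkeeping of ``open'' versus ``finite index'', which Nikolov--Segal renders harmless.
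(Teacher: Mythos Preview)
Your argument is correct and self-contained. The paper itself does not prove this proposition: it simply cites \cite[Prop.~3.2.2]{RZ} and notes that Nikolov--Segal \cite{NS} allows one to replace ``open'' by ``finite index'' on the profinite side. Your write-up is thus more detailed than what appears in the paper, but it is exactly the standard argument underlying that reference: the density lemma, the explicit construction of $\overline{\Omega}$ as the preimage of $\Omega/N$ under the continuous extension $\wh\G\to\G/N$, and the verification that the two maps are mutual inverses are all correct as stated. The only comment is that your remark ``there is no genuine obstacle here'' is apt---this is indeed a foundational bookkeeping result, and the paper treats it as such.
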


\begin{corollary}\label{correspond_corollary}
If $\G_1$ and $\G_2$ are finitely generated groups with $\wh{\G_1}\cong\wh{\G_2}$, then there is a one-to-one correspondence between the subgroups of finite index in $\G_1$ and the subgroups of finite index in $\G_2$; this correspondence preserves index and takes normal subgroups to normal subgroups.
\end{corollary}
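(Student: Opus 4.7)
The plan is to obtain the bijection by composing three bijections. Fix an isomorphism $\phi\colon\wh{\G_1}\xrightarrow{\sim}\wh{\G_2}$. Write $\mathcal{X}_i$ for the set of finite-index subgroups of $\G_i$ and $\mathcal{Y}_i$ for the set of finite-index subgroups of $\wh{\G_i}$. Proposition~\ref{correspondence}, applied separately to $\G_1$ and to $\G_2$, provides bijections $\mathcal{X}_i\leftrightarrow\mathcal{Y}_i$ sending $\Omega\mapsto\overline{\Omega}$ in one direction and $H\mapsto H\cap\G_i$ in the other. The given isomorphism $\phi$ tautologically supplies a bijection $\mathcal{Y}_1\to\mathcal{Y}_2$ via $H\mapsto\phi(H)$. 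Composing yields the desired bijection $\mathcal{X}_1\to\mathcal{X}_2$, explicitly $\Omega\mapsto\G_2\cap\phi(\overline{\Omega})$.

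Next I would verify index preservation by concatenating the relevant identities of Proposition~\ref{correspondence} with the obvious equality $[\wh{\G_1}:H]=[\wh{\G_2}:\phi(H)]$: for $\Omega\in\mathcal{X}_1$ with image $\Omega'\in\mathcal{X}_2$,
\[
[\G_1:\Omega]=[\wh{\G_1}:\overline{\Omega}]=[\wh{\G_2}:\phi(\overline{\Omega})]=[\G_2:\Omega'].
\]
For normality, the last clause of Proposition~\ref{correspondence} gives $\Omega\triangleleft\G_1\Leftrightarrow\overline{\Omega}\triangleleft\wh{\G_1}$; normality is preserved by the group isomorphism $\phi$; and a second application of Proposition~\ref{correspondence} (now to $\G_2$) yields $\phi(\overline{\Omega})\triangleleft\wh{\G_2}\Leftrightarrow\Omega'\triangleleft\G_2$. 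Chaining these equivalences completes the argument.

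No real obstacle arises here; the statement is essentially bookkeeping once Proposition~\ref{correspondence} is in hand. The one point worth flagging is that all of the substantive content has been pushed into Proposition~\ref{correspondence}, which in turn relies on the Nikolov--Segal theorem to equate ``finite index'' with ``open'' in the profinite setting. If the hypothesis $\wh{\G_1}\cong\wh{\G_2}$ is read as an abstract group isomorphism rather than a topological one, it is again Nikolov--Segal that guarantees $\phi$ carries open subgroups to open subgroups, without which the composition above might not land in $\mathcal{Y}_2$. A minor additional remark is that Proposition~\ref{correspondence} is stated for residually finite groups whereas the corollary is stated for finitely generated groups; this is harmless, since one may replace each $\G_i$ by $\G_i/K_i$, with $K_i$ the kernel of $\G_i\to\wh{\G_i}$, without altering either the lattice of finite-index subgroups or the profinite completion.
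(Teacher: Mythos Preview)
Your proof is correct and follows essentially the same approach as the paper: the paper's one-line argument fixes an identification $\wh{\G_1}=\wh{\G_2}$ and takes the correspondence $\Omega\leftrightarrow\overline{\Omega}\cap\G_2$, which is exactly the composite you describe. Your additional remarks on index, normality, Nikolov--Segal, and the residual finiteness hypothesis are all accurate elaborations of points the paper leaves implicit.
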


\begin{proof} 
Fixing an identification $\wh{\G_1}=\wh{\G_2}$, the correspondence is $\Omega \leftrightarrow \overline{\Omega}\cap\G_2$ for $\Omega<\G_1$.
\end{proof}

We will also find it useful to count conjugacy classes of finite index subgroups.

\begin{lemma}\label{l:conj-classes} 
Let $\G_1$ and $\G_2$ be finitely generated groups. If $\wh{\G_1}\cong\wh{\G_2}$ then, for every integer $d$, the number of conjugacy classes of subgroups of index $d$ in $\G_1$ is equal to the number in $\G_2$. Moreover, the sizes of the conjugacy classes are the same in $\G_1$ and $\G_2$.
\end{lemma}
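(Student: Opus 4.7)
The plan is to deduce everything from Proposition \ref{correspondence} and Corollary \ref{correspond_corollary}. Fix an isomorphism $\widehat{\G_1}\cong\widehat{\G_2}$, which gives an index-preserving bijection $\Omega_1 \leftrightarrow \Omega_2$ between the finite index subgroups of $\G_1$ and of $\G_2$, namely $\Omega_2 = \overline{\Omega_1}\cap\G_2$ (equivalently, $\overline{\Omega_1}=\overline{\Omega_2}$ in the identified profinite completion). So it is enough to prove, for a single finitely generated residually finite group $\G$, that the correspondence $\Omega\mapsto\overline{\Omega}$ carries $\G$-conjugacy classes bijectively to $\widehat{\G}$-conjugacy classes of finite index subgroups, and preserves the cardinality of each class; one can then read off the statement by transporting across the bijection.

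For the preservation of conjugacy, one direction is immediate: if $\Omega'=g\Omega g^{-1}$ with $g\in\G$, then $\overline{\Omega'}=g\overline{\Omega}g^{-1}$. For the converse, suppose $\overline{\Omega'}=x\overline{\Omega}x^{-1}$ for some $x\in\widehat{\G}$. Since $\overline{\Omega}$ has finite index in $\widehat{\G}$, the Nikolov--Segal result noted after Proposition \ref{correspondence} guarantees that $\overline{\Omega}$ is open, so the coset $x\overline{\Omega}$ is an open neighbourhood of $x$; density of $\G$ in $\widehat{\G}$ then produces $g\in\G\cap x\overline{\Omega}$. For such a $g$, $g\overline{\Omega}g^{-1}=x\overline{\Omega}x^{-1}=\overline{\Omega'}$, and intersecting with $\G$ (Proposition \ref{correspondence} again) gives $g\Omega g^{-1}=\Omega'$.

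For the sizes of conjugacy classes, I would match normalisers. The $\G$-conjugacy class of $\Omega$ has size $[\G:N_\G(\Omega)]$, and the $\widehat{\G}$-conjugacy class of $\overline{\Omega}$ has size $[\widehat{\G}:N_{\widehat{\G}}(\overline{\Omega})]$. A direct check shows $N_{\widehat{\G}}(\overline{\Omega})\cap\G=N_\G(\Omega)$: if $g\in\G$ normalises $\overline{\Omega}$, intersect with $\G$ to see $g\in N_\G(\Omega)$, and conversely if $g\in N_\G(\Omega)$ then $g\overline{\Omega}g^{-1}=\overline{g\Omega g^{-1}}=\overline{\Omega}$. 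The subgroup $N_{\widehat{\G}}(\overline{\Omega})$ is closed and contains $\overline{\Omega}$, hence has finite index, so Proposition \ref{correspondence} yields $N_{\widehat{\G}}(\overline{\Omega})=\overline{N_\G(\Omega)}$ and $[\widehat{\G}:N_{\widehat{\G}}(\overline{\Omega})]=[\G:N_\G(\Omega)]$, as needed.

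I do not expect a real obstacle here; the only delicate move is the density-plus-openness argument that replaces a profinite conjugator $x$ by a discrete conjugator $g\in\G$, and that in turn rests squarely on Nikolov--Segal to ensure $\overline{\Omega}$ is open.
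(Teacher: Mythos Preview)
Your proof is correct and is essentially the paper's approach spelled out in detail: the paper observes in one line that the bijection $H\mapsto\overline{H}$ is equivariant for the conjugation actions (via the dense inclusion $\G\hookrightarrow\wh{\G}$), so the orbit structure on index-$d$ subgroups is an invariant of $\wh{\G}$, and your density-plus-openness and normaliser arguments are precisely what makes this rigorous. One small simplification: you do not need Nikolov--Segal for the openness of $\overline{\Omega}$, since $\overline{\Omega}$ is closed by construction and a closed finite-index subgroup of a profinite group is automatically open.
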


\begin{proof} 
Each of $\G$ and $\wh{\G}$ acts on its set of index $d$ subgroups by conjugation and the bijection between these sets $H\mapsto \overline{H}$ is equivariant with respect to $\G\to\wh{\G}$. Thus the orbit structure for the action of $\G$ is an invariant of $\wh{\G}$.
\end{proof}

A standard argument shows that finitely generated groups $\Gamma_1,\Gamma_2$ have the same set of finite quotients if and only if $\wh{\G}_1\cong \wh{\G}_2$ (see \cite{DFPR}). We also require two other basic facts. First, if $\Epi(\G,Q)$ denotes the set of epimorphisms from the group $\G$ to the finite group $Q$, there is a bijection $\Epi(\widehat{\Gamma},Q)\to \Epi(\G,Q)$ (so if $\wh{\G}_1 \cong \wh{\G}_2$ then $\abs{\Epi(\G_1,Q)}=\abs{\Epi(\G_2,Q)}$). Second, we require the following elementary but  useful lemma. In the statement of the lemma, $H_1(\Gamma,\Z)$ denotes the first integral homology of $\Gamma$ and $b_1(\Gamma)$ denotes the first Betti number of $\Gamma$. 

\begin{lemma}\label{l:abel}
Let $\G_1$ and $\G_2$ be finitely generated groups. If $\G_1$ surjects a dense subgroup of $\wh\G_2$ then $b_1(\G_1)\geq b_1(\G_2)$. If $\wh\G_1\cong\wh\G_2$, then $H_1(\G_1,\Z)\cong H_1(\G_2,\Z)$.
\end{lemma}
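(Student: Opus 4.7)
The plan is to reduce both assertions to the universal property of profinite completion on finitely generated abelian groups, via two preparatory observations. First, if $\phi\colon\G_1\to\wh{\G_2}$ has dense image and $\G_2\epi Q$ is a surjection onto a finite group $Q$, then the composite $\G_1\to\wh{\G_2}\epi Q$ is again surjective, because its image is both dense in $Q$ and contained in the discrete finite set $Q$. Hence every finite quotient of $\G_2$ is also a finite quotient of $\G_1$. Second, for any finitely generated group $\G$ and any prime $p$,
\[
\dim_{\F_p}H_1(\G,\F_p) \;=\; b_1(\G) + t_p(\G),
\]
where $t_p(\G)$ counts the invariant factors of the torsion subgroup of $H_1(\G,\Z)$ of order divisible by $p$, and $t_p(\G)=0$ for all but finitely many $p$.

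For the first assertion, I would apply the first observation with $Q=(\Z/p)^{b_1(\G_2)}$, which is a quotient of $\G_2$ by the definition of $b_1(\G_2)$. This gives $\dim_{\F_p}H_1(\G_1,\F_p)\ge b_1(\G_2)$ for every prime $p$. Selecting any prime $p$ coprime to the order of the torsion subgroup of $H_1(\G_1,\Z)$ -- such a $p$ exists because $H_1(\G_1,\Z)$ is finitely generated -- and applying the displayed formula then yields $b_1(\G_1)\ge b_1(\G_2)$.

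For the second assertion, an isomorphism $\wh{\G_1}\cong\wh{\G_2}$ induces an isomorphism of the maximal continuous abelian quotients $\wh{\G_1}^{\ab}\cong\wh{\G_2}^{\ab}$. The key identification I would establish is the natural isomorphism $\wh{\G}^{\ab}\cong\widehat{H_1(\G,\Z)}$: continuous homomorphisms from $\wh{\G}$ to a finite abelian group $A$ correspond (by the universal property of profinite completion) to homomorphisms $\G\to A$, and these factor through $H_1(\G,\Z)$. This yields $\widehat{H_1(\G_1,\Z)}\cong\widehat{H_1(\G_2,\Z)}$. Since the profinite completion functor is faithful on finitely generated abelian groups -- the torsion-free rank is the $\wh{\Z}$-rank of the completion, and the torsion subgroup injects unchanged -- we conclude that $H_1(\G_1,\Z)\cong H_1(\G_2,\Z)$.

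I do not anticipate a serious obstacle here. The only subtlety is distinguishing the maximal continuous abelian quotient $\wh{\G}^{\ab}$ from the abstract quotient $\wh{\G}/[\wh{\G},\wh{\G}]$; but since $\G$ is finitely generated, the commutator subgroup of $\wh{\G}$ is closed by Nikolov--Segal (already invoked in \S\ref{prelims_profinite}), so the two coincide and the identification with $\widehat{\G^{\ab}}$ is unambiguous.
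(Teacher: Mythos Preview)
Your proof is correct. The paper does not actually supply a proof of this lemma; it simply labels it ``elementary but useful'' and states it. Your argument is the standard one: finite (abelian) quotients of $\G_2$ pull back to finite quotients of $\G_1$ via density, and a finitely generated abelian group is determined up to isomorphism by its profinite completion (equivalently, by its finite quotients). One small remark: the appeal to Nikolov--Segal at the end is not really needed. You can bypass any discussion of whether $[\wh{\G},\wh{\G}]$ is closed by arguing directly that $\wh{\G_1}\cong\wh{\G_2}$ forces $\G_1$ and $\G_2$ to have the same set of finite abelian quotients, which already pins down $H_1(\G_i,\Z)$. But what you wrote is certainly valid.
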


Finally, we need to consider the relationship between the profinite completion of a group and the profinite completions of its subgroups. Let $\Gamma$ be a finitely generated, residually finite group with subgroup $\Delta < \Gamma$. The inclusion $\Delta\hookrightarrow\Gamma$ induces a continuous homomorphism $\wh{\Delta}\to\wh{\Gamma}$ whose image is $\-{\Delta}$. The map $\wh{\Delta}\to\wh{\Gamma}$ is injective if and only if $\-{\Delta}\cong\wh{\Delta}$; and we say that {\em $\Gamma$ induces the full profinite topology on $\Delta$} when that holds. As $\Gamma$ is finitely generated, this is equivalent to the statement that for every finite index subgroup $I<\Delta$ there is a finite index subgroup $S<\Gamma$ such that $S\cap\Delta \subset I$. Note that if $\Delta<\Gamma$ is of finite index, then $\Gamma$ induces the full profinite topology on $\Delta$. 

\section{Finitely generated subgroups of $\PSL(2,\C)$ with algebraic traces}\label{s:algtraces}

To fix notation, it will be convenient to record some basic facts about trace-fields of finitely generated subgroups of $\PSL(2,\C)$.  Kleinian groups are subgroups of $\PSL(2,\C)$, but throughout the paper it is often convenient to pass to $\SL(2,\C)$. Indeed many of the arguments are given in $\SL(2,\C)$ and are applied to the pre-image of a subgroup of $\PSL(2,\C)$. For concreteness, let $\phi\colon \mathrm{SL}(2,\mathbb{C}) \to \mathrm{PSL}(2,\mathbb{C})$ be the quotient homomorphism. For $\Delta$ a finitely generated subgroup of $\PSL(2,\mathbb{C})$, we define $\Delta_1 = \phi^{-1}(\Delta)$. It will be convenient to refer to $\Delta$ as being Zariski-dense in $\PSL(2,\C)$, by which we mean $\Delta_1$ is a Zariski-dense subgroup of $\SL(2,\C)$.

\subsection{Trace-fields}\label{ss:traces}

The \textit{trace-field} of $\Delta$ is defined to be $K_\Delta=\mathbb{Q}(\mathrm{tr}(\delta)~\colon~ \delta \in \Delta_1)$. We say that $\Delta$ has {\em integral traces} if $\tr(\delta) \in R_{K_\Delta}$ for all $\delta \in \Delta_1$ where $R_{K_\Delta}$ is the ring of algebraic integers in $K_\Delta$. The field $K_\Delta$ need not be a number field. However, one well-known situation when $K_\Delta$ is a number field is when $\Delta$ is \emph{rigid} (i.e. when $\Delta$ has only finitely many Zariski-dense representations to $(\mathrm{P})\SL(2,\C)$ up to conjugation). Let $\mathrm{X}_{\mathrm{zar}}(\Delta,\C)$ denote the set of Zariski-dense representations of $\Delta$ up to conjugacy in $(\P)\SL(2,\C)$. The following lemma is a consequence of \cite[Prop 6.6]{Rag}.

\begin{lemma} \label{trace_field_number_field}
If $\Delta<(\mathrm{P}) \SL(2,\C)$ is finitely generated and $\abs{\mathrm{X}_{\mathrm{zar}}(\Delta,\C)} < \infty$, then $[K_\Delta:\Q]<\infty$.
\end{lemma}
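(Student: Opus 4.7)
The plan is to exploit the natural action of $\mathrm{Aut}(\mathbb{C}/\mathbb{Q})$ on representations to $\SL(2,\mathbb{C})$, coupled with the finiteness hypothesis on $\mathrm{X}_{\mathrm{zar}}(\Delta,\C)$. First I would pass from $\Delta < (\mathrm{P})\SL(2,\C)$ to its preimage $\Delta_1 < \SL(2,\C)$ so that the trace-field $K_\Delta$ is computed directly from matrix traces; adjoining $-I$ does not affect either the trace-field or the property of being Zariski-dense, so nothing is lost. The tautological inclusion $\iota \colon \Delta_1 \hookrightarrow \SL(2,\C)$ is then a Zariski-dense representation.

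Next, for any $\sigma \in \mathrm{Aut}(\C/\Q)$, applying $\sigma$ entrywise to $\iota$ produces another representation $\iota^{\sigma}\colon\Delta_1\to\SL(2,\C)$. Because $\sigma$ permutes the closed subvarieties of $\SL(2,\C)$ defined over $\C$, the image of $\iota^{\sigma}$ is again Zariski-dense. Zariski-density in $\SL(2,\C)$ forces absolute irreducibility, so $\iota^{\sigma_1}$ and $\iota^{\sigma_2}$ are conjugate in $\SL(2,\C)$ if and only if they have the same character, which amounts to the equality $\sigma_1(\mathrm{tr}\,\delta)=\sigma_2(\mathrm{tr}\,\delta)$ for all $\delta\in\Delta_1$, i.e.\ $\sigma_1|_{K_\Delta}=\sigma_2|_{K_\Delta}$. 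The hypothesis $|\mathrm{X}_{\mathrm{zar}}(\Delta,\C)|<\infty$ therefore implies that the restriction map $\mathrm{Aut}(\C/\Q)\to \mathrm{Hom}(K_\Delta,\C)$ has finite image; equivalently, each trace $t\in K_\Delta$ has only finitely many $\mathrm{Aut}(\C/\Q)$-conjugates. Forming the polynomial with these conjugates as roots (its coefficients lie in the fixed field $\Q$), one concludes that $t$ is algebraic over $\Q$, hence $K_\Delta\subset\overline{\Q}$.

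To finish, I would invoke finite generation of $K_\Delta$ as a field over $\Q$: since $\Delta_1$ is finitely generated, the Fricke--Vogt trace relations imply that $K_\Delta$ is generated over $\Q$ by the traces of a finite set of words in the generators. A finitely generated subfield of $\overline{\Q}$ is a number field, so $[K_\Delta:\Q]<\infty$, as required. The step I expect to be most delicate is the passage from conjugacy of the twisted representations to equality of restricted Galois elements; this hinges on absolute irreducibility (guaranteed by Zariski-density in $\SL(2,\C)$) and on properly handling the $\SL(2)$-versus-$\PSL(2)$ distinction when lifting. Once that identification is in place, the remainder is standard Galois-theoretic bookkeeping together with the classical finite generation of $\SL(2)$-trace fields.
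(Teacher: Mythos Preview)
Your argument is correct. The paper does not give its own proof of this lemma; it simply cites \cite[Prop~6.6]{Rag}, whose proof is precisely the Galois-orbit argument you outline (carried out there for $\GL(n)$): twist the inclusion by field automorphisms of $\C$ over $\Q$, use finiteness of the set of conjugacy classes to bound the orbit of each trace, and conclude algebraicity. Your version is the $\SL(2)$ specialisation, made explicit via the fact that absolutely irreducible representations are determined up to conjugacy by their characters. The only point requiring care, which you flag yourself, is the $\SL$/$\PSL$ bookkeeping: the hypothesis bounds $\PSL$-conjugacy classes of representations of $\Delta$, while you work with $\SL$-representations of $\Delta_1$. This is harmless since two Zariski-dense $\SL(2,\C)$-representations of $\Delta_1$ inducing conjugate $\PSL$-representations of $\Delta$ differ by a sign character, and $\Hom(\Delta_1,\{\pm 1\})$ is finite; so finiteness passes from one setting to the other. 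With that noted, your proof is complete and is essentially the same approach the paper invokes by reference.
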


\subsection{Invariant trace-field and quaternion algebra}\label{invariant}

The \textit{invariant trace-field} of $\Delta$ is defined to be $K\Delta = \mathbb{Q}(\mathrm{tr}(\delta^2) ~\colon~ \delta \in \Delta_1)$. Alternatively, $K\Delta = K_{\Delta^{(2)}}$ where $\Delta^{(2)}$ is the subgroup of $\Delta$ generated by $\{\delta^2 ~\colon~ \delta \in \Delta\}$. As $K_\Delta/K\Delta$ is a multi-quadratic extension of degree $2^s$ for some $s \geq 0$, if $\Delta$ has no $\mathbb{Z}/2\mathbb{Z}$ quotient, then $s=0$ and $K_\Delta = K\Delta$. The group $\Delta_1$ generates a $K_\Delta$--quaternion algebra $A_0\Delta$ and $\Delta_1^{(2)} \leq \Delta_1$ generates a $K\Delta$--quaternion algebra $A\Delta$ called the \textit{invariant quaternion algebra}.  When $\Delta_1$ has integral traces, $\Delta_1$ generates an $R_{K_\Delta}$--order $\mathcal{O}\Delta$ in $A_0\Delta$ (see \cite[Ch 3]{MR} for more details on this material). Conversely, if $\Delta_1$ is contained in an order of $A_0\Delta$, then $\Delta_1$ has integral traces.  Similar statements hold fo
 r the invariant trace-field and quaternion algebra.

 \begin{remark} \label{Vinberg} 
An alternative description of the invariant trace-field was given by Vinberg \cite{Vin} who showed that $K\Delta = \mathbb{Q}(\mathrm{tr}(\mathrm{Ad}(\delta))~:~ \delta \in \Delta_1)$, where $\mathrm{Ad}\colon \mathrm{SL}(2,\mathbb{C}) \to \mathrm{Aut}(\mathfrak{sl}(2,\mathbb{C}))$ is the adjoint representation. 
 \end{remark}
 
\section{Linear Representations and Profinite Completions}\label{s:rep}

In this section we establish the representation theoretic results that will serve as the starting point for the proofs of our main result. The main result of this section is Theorem \ref{T1}, from which we isolate the special cases that will apply to the Kleinian groups described in the introduction; see Corollary \ref{C1} and Examples \ref{ex1}, \ref{ex2}. The material in this section can be adapted to other reductive algebraic groups (see \cite{McSpit}) but we will focus exclusively on ($\mathrm{P}$)${\rm{SL}}_2$, which is the setting for the rigidity results here and in \cite{BMRS2}.

Each of our proofs of profinite rigidity begins with an arithmetic Fuchsian or Kleinian group $i\colon\G\hookrightarrow$ ($\P$)$\SL(2,\C)$ whose only Zariski-dense representations, up to conjugacy, are the Galois conjugates of the inclusion (see \S 4.3). By way of arithmeticity, this {\em{``Galois rigidity"}} enables us to control the continuous representation theory of $\wh{\G}$ into ($\P$)$\SL(2, \overline{\Q}_p)$, for any finite rational prime $p$, and hence gain control on the representation theory of  any finitely generated group $\Delta$ with $\wh{\G}\cong\wh{\Delta}$. In particular, we obtain a bijection between the conjugacy classes of the Zariski-dense representations of $\Delta$ to $(\P)\SL(2,\C)$ and those of $\G$. Moreover, the arithmetic structure associated to these representations of $\Delta$  and $\G$ are closely related.  In the cases that are of particular interest to us, the specific control on the arithmetic data of $\G$ is sufficient to force $\Delta$ to have a Zariski-dense representation whose image lies in $\G$ (or a small index extension of it) and this representation is the key output from this section with regard to profinite rigidity.

\subsection{Preliminaries}\label{prelims}

We recall some standard terminology and fix notation. The set of primes $p \in \mathbb{N}$ together with $\infty$ will be denoted by $\mathrm{P}$. For each $p \in \mathrm{P}$, the metric completion of $\mathbb{Q}$ with respect to the $p$--adic metric will be denoted by $\mathbb{Q}_p$; note that $\mathbb{Q}_\infty=\mathbb{R}$. We will refer to the metric topology on $\mathbb{Q}_p$ and associated spaces (subsets, varieties over $\mathbb{Q}_p$, quotients, etc.) as the \emph{$p$--adic analytic topology}. For each $p \in \mathrm{P} \smallsetminus \set{\infty}$, the $p$-adic norm extends uniquely to the algebraic closure 
$\overline{\mathbb{Q}_p}$ of $\mathbb{Q}_p$ which is isomorphic to $\mathbb{C}$ as a field but not as a metric field: any isomorphism will induce maps on associated varieties which are continuous with respect to the Zariski topology but not the $p$--adic analytic topologies.
 
Given a number field $K/\mathbb{Q}$, we denote the degree of $K$ over $\mathbb{Q}$ by $n_K$ and the ring of algebraic integers of $K$ by $R_K$. For each $p \in \mathrm{P}$, the set of \emph{embeddings} (i.e.,~injective field homomorphisms) $\sigma\colon K \to \overline{\mathbb{Q}_p}$ is denoted by $E_K^p$. Note that $\abs{E_K^p} = n_K$. The set of embeddings of $K$ over all $p \in \mathrm{P}$ is denoted by $E_K$. The set of embeddings over all finite $p$ is denoted by $E_K^f$. For $\sigma \in E_K^\infty$,  we say that $\sigma$ is \emph{real} if $\sigma(K) \subset \mathbb{R}$ and that it is \emph{complex} otherwise. The group of continuous (analytic) automorphisms $\Aut_c(\overline{\mathbb{Q}_p})$ acts on $E_K^p$ by post-composition. The orbits are called the \emph{places of $K$ over $p$}. The set of places of $K$ over $p$ is denoted by $V_K^p$. The union of these sets over all $p \in \mathrm{P}\smallsetminus\{\infty\}$ is the set of \emph{finite} places $V_K^f$, and when the 
 set $V_K^\infty$ of infinite places is added we get the set $V_K$ of all places. Specific infinite places $v \in V_K^\infty$ can be termed \emph{real} or \emph{complex} as the action of $\Aut_c(\mathbb{C})\cong\Z/2$ preserves $\mathbb{R}$.  Given $v \in V_K$ associated to $\sigma\colon K \to \overline{\mathbb{Q}_p}$, we denote the metric closure of $\sigma(K)$ in $\overline{\mathbb{Q}_p}$ by $K_v$, noting that the isomorphism class of this locally compact field depends only on $v$.  

Given a quaternion algebra $B/K$ and $\sigma \in E_K^p$ with associated $v \in V_K^p$, we set $B_v = B \otimes_{\sigma(K)} K_v$. Note that $B_v$ is isomorphic to the topological closure of $B$ in $B \otimes_{\sigma(K)} \overline{\mathbb{Q}_p} \cong \mathrm{M}(2,\overline{\mathbb{Q}_p})$ in the $p$--adic analytic topology. We say that $B$ is \emph{ramified} at $v \in V_K$ if $B_v$ is a division algebra and we denote the set of ramified places  by $\mathrm{Ram}(B)$. We denote the subsets of finite and infinite ramified places of $B$ by $\mathrm{Ram}_f(B)$ and $\mathrm{Ram}_\infty(B)$. By class field theory, a quaternion algebra $B/K$ is determined up to $K$--isomorphism by the set of $K_v$--isomorphism classes $\set{B_v}_{v \in V_K}$; in particular, $B$ is determined by $\mathrm{Ram}(B)$, which is finite and of even cardinality (see for example \cite{MR}). We denote the group of units of $B$ by $B^\times$ and the group of reduced norm one elements by $B^1$. Given an $R_K$--orde
 r $\mathcal{O}$ of $B$ set $\mathcal{O}^\times = \mathcal{O} \cap B^\times$ and $\mathcal{O}^1 = \mathcal{O} \cap B^1$. Given $v \in V_K$ associated to $\sigma \in E_K$, we denote the closure of $\sigma(\mathcal{O})$ in $B_v$ by $\mathcal{O}_{v}$ and the closure of $\sigma(R_K)$ in $K_v$ by $R_{v}$. We note that $\mathcal{O}_{v}$ is an $R_{v}$--order in $B_v$. 

\subsection{Bounded Representations and Profinite Completions}

Given a finitely generated group $\Lambda$, a homomorphism $ \Lambda \to \mathrm{SL}(2,\overline{\mathbb{Q}_p})$ is said to be \emph{bounded} if its image is bounded (i.e.~pre-compact) in the $p$--adic analytic topology. For all finite primes $p$, the universal property of the profinite completion $\wh{\Lambda}$ provides a correspondence between representations $\wh{\Lambda}\to \mathrm{SL}(2,\overline{\mathbb{Q}_p})$ that are continuous in the $p$--adic analytic topology and bounded representations $\Lambda \to \mathrm{SL}(2,\overline{\mathbb{Q}_p})$; see Lemma \ref{L2}. Conjugacy classes are preserved by this correspondence, as is Zariski-denseness. Thus we obtain five related sets of representations, for which it is convenient to have names:
\begin{align*}
\mathrm{X}_b(\Lambda,\overline{\mathbb{Q}_p}) &= \set{\textrm{the conjugacy classes of bounded representations}~~\Lambda \to \mathrm{SL}(2,\overline{\mathbb{Q}_p})} \\ 
\mathrm{X}_c(\widehat{\Lambda},\overline{\mathbb{Q}_p}) &= \set{\textrm{the conjugacy classes of continuous representations}~~\widehat{\Lambda} \to \mathrm{SL}(2,\overline{\mathbb{Q}_p})}\\
\mathrm{X}_{\mathrm{zar}}(\Lambda,\overline{\mathbb{Q}_p}) &= \set{\textrm{the conjugacy classes of Zariski-dense representations}~~\Lambda \to \mathrm{SL}(2,\overline{\mathbb{Q}_p})} \\
\mathrm{X}_{c,\mathrm{zar}}(\widehat{\Lambda},\overline{\mathbb{Q}_p}) &= \set{\textrm{the conjugacy classes of Zariski-dense continuous representations}~~\widehat{\Lambda} \to \mathrm{SL}(2,\overline{\mathbb{Q}_p})}.
\end{align*}
The final set is $\mathrm{X}_{b,\mathrm{zar}}(\Lambda,\overline{\mathbb{Q}_p})= \mathrm{X}_b(\Lambda,\overline{\mathbb{Q}_p}) \cap \mathrm{X}_{\mathrm{zar}}(\Lambda,\overline{\mathbb{Q}_p})$. Given $p_1,p_2 \in \mathrm{P}$, any field isomorphism $\theta\colon \overline{\mathbb{Q}_{p_1}} \to \overline{\mathbb{Q}_{p_2}}$ induces an isomorphism of abstract groups $\mathrm{SL}(2,\overline{\mathbb{Q}_{p_1}}) \to \mathrm{SL}(2,\overline{\mathbb{Q}_{p_2}})$ and a bijection  $\theta_*\colon \mathrm{X}_{\mathrm{zar}}(\Lambda,\overline{\mathbb{Q}_{p_1}}) \longrightarrow \mathrm{X}_{\mathrm{zar}}(\Lambda,\overline{\mathbb{Q}_{p_2}})$,  so 
\begin{equation}\label{Eq:SillyEq}
\abs{\mathrm{X}_{\mathrm{zar}}(\Lambda,\overline{\mathbb{Q}_{p_1}})} = \abs{\mathrm{X}_{\mathrm{zar}}(\Lambda,\overline{\mathbb{Q}_{p_2}})}.
\end{equation}

\begin{proposition}\label{c:new} 
If $\Lambda$, $\Delta$ are finitely generated groups with $\wh{\Lambda}\cong\wh{\Delta}$ and $\abs{\mathrm{X}_{\mathrm{zar}}(\Lambda,\mathbb{C})}<\infty$, then $\abs{\mathrm{X}_{\mathrm{zar}}(\Delta,\mathbb{C})}=\abs{\mathrm{X}_{\mathrm{zar}}(\Lambda,\mathbb{C})}$.
\end{proposition}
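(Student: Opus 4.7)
The strategy is to pass via the field isomorphisms $\mathbb{C}\cong\overline{\mathbb{Q}_p}$ of \eqref{Eq:SillyEq} into the $p$-adic setting, use the (forthcoming) bijection of Lemma~\ref{L2} between bounded representations of $\Lambda$ and continuous representations of $\widehat\Lambda$, and then transfer to $\Delta$ using $\widehat\Lambda\cong\widehat\Delta$. Set $N:=\lvert\mathrm{X}_{\mathrm{zar}}(\Lambda,\mathbb{C})\rvert$ and let $\rho_1,\dots,\rho_N$ be representatives of its conjugacy classes.

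By Lemma~\ref{trace_field_number_field}, each $\rho_i$ has a trace field that is a number field, so after conjugating (and possibly passing to a splitting field of the associated quaternion algebra) we may assume $\rho_i(\Lambda)\subset\SL(2,R_{L_i}[1/n_i])$ for a number field $L_i$ and integer $n_i$ — here we use that $\Lambda$ is finitely generated. Pick any finite prime $p$ coprime to $n_1\cdots n_N$. For any field isomorphism $\theta\colon\overline{\mathbb{Q}_p}\to\mathbb{C}$, the representation $\theta^{-1}\circ\rho_i$ has image in a maximal compact subgroup $\SL(2,R_v)\subset\SL(2,\overline{\mathbb{Q}_p})$, where $v\mid p$ is the place of $L_i$ determined by $\theta^{-1}$. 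Combining with \eqref{Eq:SillyEq}, this forces $\lvert\mathrm{X}_{b,\mathrm{zar}}(\Lambda,\overline{\mathbb{Q}_p})\rvert=\lvert\mathrm{X}_{\mathrm{zar}}(\Lambda,\overline{\mathbb{Q}_p})\rvert=N$. Applying Lemma~\ref{L2} together with $\widehat\Lambda\cong\widehat\Delta$ then yields
\[
\lvert\mathrm{X}_{b,\mathrm{zar}}(\Delta,\overline{\mathbb{Q}_p})\rvert=\lvert\mathrm{X}_{c,\mathrm{zar}}(\widehat\Delta,\overline{\mathbb{Q}_p})\rvert=\lvert\mathrm{X}_{c,\mathrm{zar}}(\widehat\Lambda,\overline{\mathbb{Q}_p})\rvert=N,
\]
and a further application of \eqref{Eq:SillyEq} to $\Delta$ gives $\lvert\mathrm{X}_{\mathrm{zar}}(\Delta,\mathbb{C})\rvert\geq N$.

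For the reverse inequality, I would argue by contradiction: suppose there were $N+1$ distinct conjugacy classes of Zariski-dense representations of $\Delta$ into $\SL(2,\mathbb{C})$. The $\SL_2$-character variety $X(\Delta)$ is defined over $\mathbb{Q}$; since every isolated $\mathbb{C}$-point is already algebraic and $\overline{\mathbb{Q}}$-points are Zariski-dense in any positive-dimensional $\mathbb{Q}$-component, there must in fact be at least $N+1$ conjugacy classes of Zariski-dense representations $\tau_1,\dots,\tau_{N+1}$ with algebraic traces, each conjugate into $\SL(2,R_{L'_j}[1/m_j])$ for a number field $L'_j$ and integer $m_j$. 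Choosing $p$ coprime to $\prod_j m_j$ (as well as to $\prod_i n_i$) and repeating the boundedness argument for the $\tau_j$ produces $N+1$ distinct classes in $\mathrm{X}_{b,\mathrm{zar}}(\Delta,\overline{\mathbb{Q}_p})$, contradicting the equality with $N$ already established.

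The main obstacle is this last step: Lemma~\ref{trace_field_number_field} cannot be invoked directly for $\Delta$ since we do not yet know that $\lvert\mathrm{X}_{\mathrm{zar}}(\Delta,\mathbb{C})\rvert$ is finite, so we cannot immediately conclude that its Zariski-dense representations have algebraic traces. The density of $\overline{\mathbb{Q}}$-points in positive-dimensional $\mathbb{Q}$-subvarieties of $X(\Delta)$ is what allows us to sidestep this and manufacture enough bounded $p$-adic representations to close the argument.
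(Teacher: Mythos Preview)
Your overall strategy---transfer to $\overline{\Q_p}$, use Lemma~\ref{L2} and $\wh\Lambda\cong\wh\Delta$, then pull back---is exactly the paper's. The difference, and the weak point, is in how you produce bounded $p$-adic representations out of complex ones.

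You insist on first arranging that the relevant representations have \emph{algebraic} traces, so that they conjugate into $\SL(2,R_L[1/m])$; you then pick $p$ coprime to the denominators. For $\Lambda$ this is fine (Lemma~\ref{trace_field_number_field} applies). For $\Delta$ you try to manufacture $N+1$ Zariski-dense characters with algebraic traces via the $\Q$-structure of the character variety. The gap is here: density of $\overline{\Q}$-points in a positive-dimensional $\Q$-component gives you $\overline{\Q}$-points, but not automatically \emph{Zariski-dense} ones. A $\overline{\Q}$-point of $X(\Delta)$ could be the character of a reducible or of a finite-image representation; ruling this out for enough points needs further argument (e.g.\ choosing a non-constant trace function and a value not of the form $\zeta+\zeta^{-1}$, then checking the fibre meets the irreducible locus). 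It can be made to work, but you have not done it.

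The paper sidesteps this completely. Lemma~\ref{L2a} (Noether normalisation) shows that for \emph{any} finite list of Zariski-dense representations $\phi_1,\dots,\phi_n\colon G\to\SL(2,\C)$ of a finitely generated group $G$, the finitely generated ring of matrix coefficients embeds into $\overline{\Z_p}$ for almost every $p$, making all the $\phi_i$ bounded simultaneously---no algebraicity of traces is needed. Applied to $N+1$ hypothetical Zariski-dense $\C$-representations of $\Delta$, this immediately gives $\lvert\mathrm{X}_{b,\mathrm{zar}}(\Delta,\overline{\Q_p})\rvert>N$ for almost all $p$, contradicting the chain of equalities you already established. This is both shorter and avoids the delicate point about which $\overline{\Q}$-points of $X(\Delta)$ are Zariski-dense.
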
 

To prove this proposition, we need two lemmas.
 
\begin{lemma}\label{L2a}
Let $\Lambda$ be a finitely generated group. If $\phi_1,\dots,\phi_n\colon\Lambda\to \SL(2,\C)$ are Zariski-dense representations then for all but finitely many $p\in P\smallsetminus\{\infty\}$ there is a field isomorphism $\theta\colon\C\to \overline{\Q_p}$ such that each of the representations $\theta_*(\phi_i)$ is bounded. If $\mathrm{X}_{\mathrm{zar}}(\Lambda,\mathbb{C})$ is finite then $\abs{\mathrm{X}_{\mathrm{zar}}(\Lambda,\mathbb{C})} = \abs{\mathrm{X}_{b,\mathrm{zar}}(\Lambda,\overline{\mathbb{Q}_p})}$ and $\mathrm{X}_{\mathrm{zar}}(\Lambda,\overline{\mathbb{Q}_p}) = \mathrm{X}_{b,\mathrm{zar}}(\Lambda,\overline{\mathbb{Q}_p})$ for all but finitely many $p\in P\smallsetminus\{\infty\}$.
\end{lemma}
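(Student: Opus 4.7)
The plan is to find, for each $p$ outside a finite exceptional set, a single field isomorphism $\theta \colon \C \to \overline{\mathbb{Q}_p}$ that simultaneously bounds all of the $\phi_i$. To set things up, fix a finite generating set $S$ of $\Lambda$ and let $A \subset \C$ be the subring generated by the matrix entries of the $\phi_i(s)$, for $s \in S$ and $1 \le i \le n$. Then $A$ is a finitely generated $\Z$-algebra and an integral domain of characteristic zero; moreover $\phi_i(\Lambda) \subset \SL(2,A)$ for each $i$, because $\SL(2,A)$ is closed under inversion. It therefore suffices to construct $\theta$ so that $\theta(A) \subset \overline{\Z_p}$, for then each $\theta_*(\phi_i)(\Lambda) \subset \SL(2,\overline{\Z_p})$, and every entry has $p$-adic absolute value at most one, which is precisely boundedness in the $p$-adic analytic topology.

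The hard part is to produce, for almost all $p$, a ring homomorphism $\psi \colon A \to \overline{\Z_p}$. One concrete approach is to choose a transcendence basis $y_1, \ldots, y_d \in A$ of $\mathrm{Frac}(A)$ over $\Q$ and a primitive element $z \in A$ for the algebraic extension $\mathrm{Frac}(A) / \Q(y_1, \ldots, y_d)$, so that after inverting a nonzero integer $N$ one has $A[1/N] = \Z[1/N, y_1, \ldots, y_d, z]$ with $z$ a root of some $g(T) \in \Z[y_1, \ldots, y_d][T]$ of positive degree whose leading coefficient $c(y) \in \Z[y_1, \ldots, y_d]$ is nonzero. For every prime $p \nmid N$ exceeding a bound depending only on $c$, one can find $t_1, \ldots, t_d \in \Z_p$ with $c(t_1, \ldots, t_d) \in \Z_p^\times$; normalising the specialisation $g(t,T)$ to a monic element of $\Z_p[T]$ then gives a root $\bar z \in \overline{\Z_p}$, and $y_i \mapsto t_i$, $z \mapsto \bar z$ defines the desired $\psi$. (Equivalently, Chevalley's theorem implies that the image of $\mathrm{Spec}\,A \to \mathrm{Spec}\,\Z$ is a constructible subset containing the generic point, hence cofinite in $\mathrm{Spec}\,\Z$, and one lifts from the resulting residue characteristic $p$ maximal ideals to $\overline{\Z_p}$ by the same Hensel-type argument.) Having $\psi$, we extend it to an embedding $\mathrm{Frac}(A) \hookrightarrow \overline{\mathbb{Q}_p}$, and then, because $\C$ and $\overline{\mathbb{Q}_p}$ are both algebraically closed fields of characteristic zero and cardinality $2^{\aleph_0}$, we invoke Steinitz's theorem to extend further to a field isomorphism $\theta \colon \C \to \overline{\mathbb{Q}_p}$ with $\theta|_A = \psi$. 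This proves the first assertion.

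For the final claim, suppose $n := \abs{\mathrm{X}_{\mathrm{zar}}(\Lambda, \C)} < \infty$ and pick representatives $\phi_1, \ldots, \phi_n$. The first part supplies, for all but finitely many $p$, a single $\theta$ making every $\theta_*(\phi_i)$ bounded. Since $\theta_*$ is a bijection of Zariski-dense conjugacy classes, the $\theta_*(\phi_i)$ remain pairwise non-conjugate, so $\abs{\mathrm{X}_{b,\mathrm{zar}}(\Lambda, \overline{\mathbb{Q}_p})} \ge n$. On the other hand, $(\ref{Eq:SillyEq})$ gives $\abs{\mathrm{X}_{\mathrm{zar}}(\Lambda, \overline{\mathbb{Q}_p})} = n$, and combined with the trivial inclusion $\mathrm{X}_{b,\mathrm{zar}} \subseteq \mathrm{X}_{\mathrm{zar}}$ this forces equality of the two sets, both of cardinality $n$, as desired. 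The main obstacle throughout is the existence of the ring map $\psi$: everything else is formal manipulation of field isomorphisms and an application of $(\ref{Eq:SillyEq})$.
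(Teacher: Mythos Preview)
Your approach is the same as the paper's in outline: generate a finitely generated $\Z$--algebra $A\subset\C$ from the matrix entries, produce for almost all $p$ a ring map landing in $\overline{\Z_p}$, extend to a field isomorphism via Steinitz, and deduce the second assertion formally from (\ref{Eq:SillyEq}). There is, however, a real gap at the step ``Having $\psi$, we extend it to an embedding $\mathrm{Frac}(A)\hookrightarrow\overline{\Q_p}$'': such an extension exists only if $\psi$ is \emph{injective}, and nothing in your choice of $t_1,\dots,t_d\in\Z_p$ guarantees that. If the $t_i$ happen to satisfy an algebraic relation over $\Q$, the map $\Z[1/N,y_1,\dots,y_d]\to\Z_p$ already has nontrivial kernel, and then $\psi$ cannot possibly extend to the fraction field. (The Chevalley parenthetical has the same defect: lifting from a closed point of residue characteristic $p$ produces a map that factors through a proper quotient of $A$, not an embedding.)

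The fix is to insist that the $t_i$ be \emph{algebraically independent} transcendentals in $\Z_p$, which is precisely what the paper does. The paper packages this via Noether normalization: after inverting a single integer $\alpha$, one writes $A[\alpha^{-1}]$ as a module--finite (hence integral) extension of a polynomial ring $\Z[\alpha^{-1}][x_1,\dots,x_m]$ and sends the $x_i$ to independent transcendentals in $\Z_p$; integrality then forces all of $A$ into $\overline{\Z_p}$ automatically, with no need to control any leading coefficient. In your primitive--element formulation you would have to arrange both algebraic independence of the $t_i$ and $c(t)\in\Z_p^\times$ simultaneously, which can be done (choose the $t_i$ transcendental and congruent modulo $p$ to a point of $\F_p^{\,d}$ at which $\bar c$ does not vanish), but the Noether--normalization route is cleaner.
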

 
\begin{proof}
Let $R$ be the finitely generated subring of $\mathbb{C}$ generated by the matrix coefficients $\phi_i(\Lambda)$ for $i=1,\dots,n$. By Noether normalization, there exists $\alpha \in \mathbb{Z}$ such that $R[\alpha^{-1}]$ is isomorphic to a finite extension of $\mathbb{Z}[\alpha^{-1}][x_1,...,x_m]$. For any finite $p$ which does not divide $\alpha$, we obtain a ring embedding $\theta\colon R \to \overline{\mathbb{Q}_p}$ such that $\theta(R)$ is bounded by sending the transcendentals $x_1,...,x_m$ to $m$ independent transcendentals in $\mathbb{Z}_p$. This embedding has a unique extension to the field of fractions of $R$, which we identify with a subfield of $\mathbb{C}$. Using the axiom of choice, this in turn can be extended to the desired field isomorphism $\theta\colon \mathbb{C} \to \overline{\mathbb{Q}_p}$. If $\mathrm{X}_{\mathrm{zar}}(\Lambda,\mathbb{C})=\{\phi_1,\dots,\phi_n\}$, then $\theta_*$ defines an injection 
$\mathrm{X}_{\mathrm{zar}}(\Lambda,\mathbb{C}) \hookrightarrow \mathrm{X}_{b,\mathrm{zar}}(\Lambda,\overline{\mathbb{Q}_p})$ for all $p$ not dividing $\alpha$, and  \eqref{Eq:SillyEq} completes the proof.
\end{proof}
 
When $\wh{\G}\cong\wh{\Lambda}$, one can indirectly relate the complex representation theory of $\G$ to that of $\Lambda$ via $\mathrm{X}_c(\widehat{\Lambda},\overline{\mathbb{Q}_p})$.

\begin{lemma}\label{L2}
If $\Lambda$ is finitely generated group, then for each finite $p$, the map $ \mathrm{X}_c(\widehat{\Lambda},\overline{\mathbb{Q}_p}) \to \mathrm{X}_b(\Lambda,\overline{\mathbb{Q}_p})$ given by composing representations with the canonical map $\Lambda\to\wh{\Lambda}$ is a bijection, and it restricts to a bijection from $\mathrm{X}_{c,\mathrm{zar}}(\widehat{\Lambda},\overline{\mathbb{Q}_p})$ to $\mathrm{X}_{b,\mathrm{zar}}(\Lambda,\overline{\mathbb{Q}_p})$. 
\end{lemma}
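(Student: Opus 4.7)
\medskip

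\noindent\textbf{Proof proposal for Lemma \ref{L2}.}

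The plan is to establish the bijection by directly constructing an inverse to the restriction map. First, I will show that the map is well-defined: given a continuous representation $\tilde\rho\colon\wh{\Lambda}\to \SL(2,\overline{\Q_p})$, the image $\tilde\rho(\wh{\Lambda})$ is compact, hence bounded, so the composition $\rho=\tilde\rho\circ i\colon \Lambda\to \SL(2,\overline{\Q_p})$ (where $i\colon\Lambda\to\wh\Lambda$ is the canonical map) is bounded. Conjugation by any $g\in \SL(2,\overline{\Q_p})$ commutes with restriction, so the induced map on conjugacy classes is well-defined.

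For surjectivity, suppose $\rho\colon \Lambda\to \SL(2,\overline{\Q_p})$ is bounded. Since $\Lambda$ is finitely generated, the matrix entries of the generators of $\rho(\Lambda)$ lie in a finite extension $L/\Q_p$, so $\rho(\Lambda)\subset \SL(2,L)$; in $\SL(2,L)$ the closure $K=\overline{\rho(\Lambda)}$ is compact, and because $\SL(2,L)$ is totally disconnected and locally compact, $K$ is profinite. The subspace topology induced on $K$ from $\SL(2,\overline{\Q_p})$ agrees with the profinite topology since both are Hausdorff, $K$ is compact in both, and the identity map is continuous from one to the other. By the universal property of profinite completion, $\rho$ extends to a continuous homomorphism $\tilde\rho\colon\wh{\Lambda}\to K\hookrightarrow \SL(2,\overline{\Q_p})$, proving surjectivity.

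For injectivity of the map on conjugacy classes, suppose $\tilde\rho_1,\tilde\rho_2\colon \wh{\Lambda}\to \SL(2,\overline{\Q_p})$ are continuous and that there exists $g\in\SL(2,\overline{\Q_p})$ with $g\,\tilde\rho_1(i(\lambda))g^{-1}=\tilde\rho_2(i(\lambda))$ for every $\lambda\in\Lambda$. Since $i(\Lambda)$ is dense in $\wh{\Lambda}$ and both $\tilde\rho_1,\tilde\rho_2$ are continuous, the set $\{x\in\wh{\Lambda}: g\tilde\rho_1(x)g^{-1}=\tilde\rho_2(x)\}$ is closed and contains $i(\Lambda)$, hence equals $\wh{\Lambda}$; thus $\tilde\rho_1$ and $\tilde\rho_2$ are conjugate. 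Uniqueness of the continuous extension then identifies the inverse map explicitly, so the restriction map is a bijection on conjugacy classes.

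Finally, for the Zariski-dense assertion, observe that the $p$-adic closure $\tilde\rho(\wh{\Lambda})=\overline{\rho(\Lambda)}$ is contained in its Zariski closure, which is $p$-adically closed; therefore the Zariski closure of $\tilde\rho(\wh{\Lambda})$ coincides with the Zariski closure of $\rho(\Lambda)$. Consequently $\tilde\rho$ is Zariski-dense if and only if $\rho$ is, and the bijection restricts to the claimed bijection between $\mathrm{X}_{c,\mathrm{zar}}(\wh{\Lambda},\overline{\Q_p})$ and $\mathrm{X}_{b,\mathrm{zar}}(\Lambda,\overline{\Q_p})$. The main obstacle, and the point requiring care, is the identification of the two topologies on the compact group $K$; once that is in hand, the rest is formal manipulation with the universal property.
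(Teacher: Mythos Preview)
Your proof is correct and follows essentially the same route as the paper's: pass to a finite extension $L/\Q_p$ so that $\rho(\Lambda)\subset\SL(2,L)$, observe that the closure of a bounded subgroup in the locally profinite group $\SL(2,L)$ is profinite, and invoke the universal property of $\wh{\Lambda}$; density of $i(\Lambda)$ handles injectivity, and comparison of the $p$-adic and Zariski topologies gives the Zariski-dense statement. Your concern about reconciling two topologies on $K$ is unnecessary, since $\SL(2,L)$ sits in $\SL(2,\overline{\Q_p})$ as a topological subspace and there is only one topology in play; otherwise the argument matches the paper's.
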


\begin{proof} 
The image of any continuous representation $\widehat{\phi}\colon \widehat{\Lambda} \to \mathrm{SL}(2,\overline{\mathbb{Q}_p})$ is compact, so its restriction to the image of $\Lambda$ is bounded. Conversely, given $\phi\colon \Lambda \to \mathrm{SL}(2,\overline{\mathbb{Q}_p})$, since $\phi(\Lambda)$ is finitely generated, there exists a finite extension $F/\mathbb{Q}_p$ such that $\phi(\Lambda) < \mathrm{SL}(2,F)$. The closed subgroup $\mathrm{SL}(2,F) < \mathrm{SL}(2,\overline{\mathbb{Q}_p})$ is locally profinite (i.e.~Hausdorff, locally compact, and totally disconnected), so if $\phi$ is bounded then the topological closure $\overline{\phi(\Lambda)}$ of $\phi(\Lambda)$ is a profinite group, and hence $\phi$ extends to a continuous representation of $\wh{\Lambda}$. As conjugation in $\mathrm{SL}(2,\overline{\mathbb{Q}_p})$ is continuous and the image of $\Lambda$ is dense in $\wh{\Lambda}$, the correspondence between continuous representations of $\wh{\Lambda}$ and bounded r
 epresentations of $\Lambda$ induces a bijection $\mathrm{X}_c(\widehat{\Lambda},\overline{\mathbb{Q}_p}) \to \mathrm{X}_b(\Lambda,\overline{\mathbb{Q}_p})$. The correspondence preserves Zariski-denseness because the $p$--adic analytic topology is finer than the Zariski topology.
\end{proof} 

\begin{proof}[Proof of Proposition \ref{c:new}] 
If $\abs{\mathrm{X}_{\mathrm{zar}}(\Delta,\mathbb{C})}$ were greater than $N(\Lambda):=\abs{\mathrm{X}_{\mathrm{zar}}(\Lambda,\mathbb{C})}$, then Lemma \ref{L2a}  would imply for almost all finite primes $\abs{\mathrm{X}_{b,\mathrm{zar}}(\Delta,\overline{\mathbb{Q}_p})}>N(\Lambda)$. By Lemma \ref{L2},
\[ \abs{\mathrm{X}_{b,\mathrm{zar}}(\Delta,\overline{\mathbb{Q}_p})} = \abs{\mathrm{X}_{c,\mathrm{zar}}(\wh{\Delta},\overline{\mathbb{Q}_p})} = \abs{\mathrm{X}_{c,\mathrm{zar}}(\wh{\Lambda},\overline{\mathbb{Q}_p})} = \abs{\mathrm{X}_{b,\mathrm{zar}}(\Lambda,\overline{\mathbb{Q}_p})}. \] 
However, 
$N(\Lambda)=\abs{\mathrm{X}_{b,\mathrm{zar}}(\Lambda,\overline{\mathbb{Q}_p})}$ for almost all $p$ by Lemma \ref{L2a}, which is a contradiction.
\end{proof}

\subsection{Galois Rigidity}\label{s:galois-rigid} 

In order to produce the necessary representations needed for profinite rigidity, we require lattices with the fewest possible Zariski-dense representations. Consider the situation where $\G$ is a finitely generated group with $\rho\colon \G\rightarrow (\P)\SL(2,\C)$ a Zariski-dense representation such that $K=K_{\rho(\G)}$ is a number field of degree $n_K$. As $K=\Q(\theta)$ for some algebraic number $\theta$, the Galois conjugates $\theta=\theta_1,\dots,\theta_{n_K}$ of $\theta$ provide embeddings $\sigma_i\colon K\to\C$ defined by $\theta\mapsto\theta_i$.  These in turn can be used to build $n_K$ Zariski-dense non-conjugate representations $\rho_{\sigma_i}\colon \G \to (\P)\SL(2,\C)$ with the property that $\tr(\rho_{\sigma_i}(\gamma))=\sigma_i(\tr\rho(\gamma))$ for all $\gamma\in \G$. We will refer to these as {\em Galois conjugate representations}. We see from this construction that $|\mathrm{X}_{\mathrm{zar}}(\G,\mathbb{C})|\geq  n_K$ which motivates the following defini
 tion. 

\begin{definition}[Galois Rigid]\label{def:galois-rigid}
Let $\G$ be a finitely generated group and let $\rho\colon\G\to (\P)\SL(2,\C)$ be a Zariski-dense representation whose trace-field $K_{\rho(\G)}$ is a number field. If $|\mathrm{X}_{\mathrm{zar}}(\G,\mathbb{C})|= n_{K_{\rho(\G)}}$, we say that $\G$ is {\em Galois rigid} (with associated field $K_{\rho(\G)}$).
\end{definition}

When we say that a subgroup of $(\P)\SL(2,\C)$ is Galois rigid, we implicitly take $\rho$ to be the inclusion map. Note that if $\G$ is Galois rigid, then any irreducible representation with infinite image can serve as $\rho$ as all such representations are Galois conjugate. In particular, $K_{\rho(\G)}$ is an intrinsic invariant of $\G$, as is the  quaternion algebra $A_0\G:=A_0\rho(\G)$ and  the group homomorphism 
$\G \to \rho(\G) \hookrightarrow A_0\G^1$. Integrality of traces plays a key role in what follows and is a necessary
property beyond Galois rigidity. From the discussion in \S \ref{invariant} integrality is assured if $\G$ is contained in an order $\mathcal{O}<A_0\G$.

\begin{lemma}\label{l:integral-traces} 
If $\G$ is a finitely generated, residually finite group with the property that for each Zariski-dense representation $\rho\colon\G\to(\P)\SL(2,\C)$ we have $\mathrm{tr}(\rho(\gamma)) \in R_{K_{\rho(\Gamma)}}$ for all $\gamma \in \Gamma$ and for some number field $K_{\rho(\Gamma)}$, then $\mathrm{X}_{\mathrm{zar}}(\Gamma,\overline{\mathbb{Q}_p}) =  \mathrm{X}_{b,\mathrm{zar}}(\G,\overline{\mathbb{Q}_p})$ for all primes $p$. 
\end{lemma}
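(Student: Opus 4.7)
The plan is to reduce boundedness to a classical lattice-stabilisation result of Bass--Serre. Fix a finite prime $p$ and an arbitrary Zariski-dense representation $\rho\colon \Gamma \to \SL(2, \overline{\Q_p})$; the goal is to show $\rho$ is bounded. Since $\Gamma$ is finitely generated, $\rho(\Gamma)$ lies in $\SL(2, F)$ for some finite extension $F/\Q_p$ with ring of integers $R_F$. It therefore suffices to find a conjugate of $\rho(\Gamma)$ inside the compact subgroup $\SL(2, R_F) < \SL(2, F)$.

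The first step is to transport the complex-analytic trace hypothesis across to $\overline{\Q_p}$. Using the axiom of choice, pick an abstract field isomorphism $\theta\colon \overline{\Q_p} \to \C$ (both sides are algebraically closed fields of characteristic zero with continuum cardinality and transcendence degree, hence abstractly isomorphic). The composition $\theta_*\rho\colon \Gamma \to \SL(2,\C)$ is Zariski-dense, since Zariski-density depends only on the abstract field structure; so by hypothesis each $\tr(\theta_*\rho(\gamma))$ lies in the ring of integers of the number field $K_{\theta_*\rho(\Gamma)}$. Because $\theta$ restricts to an automorphism of $\overline{\Q}$ and algebraic integrality is preserved by any such automorphism, $\tr(\rho(\gamma))$ is an algebraic integer in $\overline{\Q_p}$, hence lies in $R_F$ for every $\gamma$.

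The second step is to invoke the Bass--Serre stabilisation criterion (Serre, \emph{Trees}, Ch.~II, Thm.~2): any subgroup of $\GL(V)$, with $V$ a finite-dimensional vector space over a discretely valued field $F$, that acts $F$-irreducibly on $V$ and all of whose elements have trace in $R_F$, must stabilise some $R_F$-lattice in $V$. In our situation $V = F^2$, irreducibility follows from Zariski-density of $\rho(\Gamma)$ (a reducible image would lie in a Borel subgroup, whose Zariski closure is proper in $\SL_2$), and trace integrality was just established. Choosing a basis for the stabilised lattice conjugates $\rho(\Gamma)$ into $\SL(2, R_F)$, yielding the desired boundedness.

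The main conceptual step is crossing between the archimedean and non-archimedean worlds to transfer integrality; this is handled cleanly by the abstract field isomorphism, because algebraic integrality is a purely algebraic notion. The Bass--Serre input is a classical black box, so I do not anticipate technical obstacles. (For $p=\infty$ the statement requires separate interpretation, as Zariski-dense subgroups of $\SL(2,\C)$ are never relatively compact; the argument above and the subsequent use of the lemma only concern finite primes.)
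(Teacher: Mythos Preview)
Your proof is correct and follows essentially the same route as the paper: transfer the integral-trace hypothesis from $\C$ to $\overline{\Q_p}$ via an abstract field isomorphism, then argue that integral traces force boundedness. The only difference is in how the final step is justified: the paper takes the $R_v$-span of $\rho(\Gamma)$ to produce an order in a quaternion algebra and observes that the norm-one elements of an order are compact, whereas you invoke the Bass--Serre lattice-stabilisation criterion directly. These two arguments are closely related (the order contains a stabilised lattice) and both are standard; neither offers a real advantage over the other here.
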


\begin{proof} 
For convenience we work with $\rho\colon\G\to \SL(2,\C)$, a Zariski-dense representation as in the statement of lemma. We replace $\mathbb{C}$ with $\overline{\Q_p}$ by fixing a field isomorphism $\C\to\overline{\Q_p}$; note that this preserves algebraic integers. Then $K=K_{\rho(\G)}$ is a number field, and the traces of elements $\rho(\gamma)$ are algebraic integers in $K\subset \overline{\Q_p}$.  In particular, for any place $v\in V_K^p$, $K_v$ is a finite extension of $\Q_p$, and integrality implies that $\tr(\rho(\gamma))\in R_v$ for all $\gamma \in \G$.  In particular, since $R_v$ is compact, $\tr(\rho(\gamma))$ is bounded. Thus the representation $\rho\colon \G\to{\rm{SL}}(2,\overline{\Q_p})$ has bounded traces.  But it is a standard argument that a representation with bounded traces is bounded. Briefly, following the construction of \S \ref{invariant}, taking the $R_v$ span of $\rho(\G)$ in $\mathrm{M}(2,\overline{\Q_p})$ determines an $R_v$--order of a quaternion alg
 ebra $B_v$.  The elements of norm $1$ in this order form a compact subgroup of $\rm{SL}(2,\overline{\Q_p})$ (see \cite[Ch 7]{MR} for example), and so the representation is bounded.
\end{proof}

Combining \eqref{Eq:SillyEq} with Lemmas \ref{L2}, \ref{l:integral-traces} and Proposition \ref{c:new} we have:
 
\begin{lemma}\label{L:SimRig}
Let $\Lambda$ be a Galois rigid group with associated field $K$, each of whose Zariski-dense representations to $(\P)\SL(2,\C)$ has integral traces. Suppose that $\Delta$ is a finitely generated group with $\widehat{\Delta} \cong \widehat{\Lambda}$. Then,
\begin{itemize}
\item[(i)]
$\abs{\mathrm{X}_{b,\mathrm{zar}}(\Lambda,\overline{\mathbb{Q}_p})} = n_K$ for all finite $p \in \mathrm{P}$,
\item[(ii)]
$\abs{\mathrm{X}_{\mathrm{zar}}(\Delta,\mathbb{C})} = n_K$,
\item[(iii)]
$\mathrm{X}_{b,\mathrm{zar}}(\Delta,\overline{\mathbb{Q}_p}) = \mathrm{X}_{\mathrm{zar}}(\Delta,\overline{\mathbb{Q}_p})$ for all finite $p$.
\end{itemize}
\end{lemma}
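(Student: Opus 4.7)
My plan is to obtain the three statements in turn, in the order listed, each essentially by assembling the previously established results.

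For part (i), I would begin by using the hypothesis that $\Lambda$ is Galois rigid with associated field $K$, so by Definition \ref{def:galois-rigid} we have $\abs{\mathrm{X}_{\mathrm{zar}}(\Lambda,\mathbb{C})} = n_K$. Equation \eqref{Eq:SillyEq} then gives $\abs{\mathrm{X}_{\mathrm{zar}}(\Lambda,\overline{\mathbb{Q}_p})} = n_K$ for every prime $p$. Since every Zariski-dense representation of $\Lambda$ has integral traces by hypothesis, Lemma \ref{l:integral-traces} gives the equality $\mathrm{X}_{\mathrm{zar}}(\Lambda,\overline{\mathbb{Q}_p}) = \mathrm{X}_{b,\mathrm{zar}}(\Lambda,\overline{\mathbb{Q}_p})$ for all primes $p$. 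Combining the two displays yields (i).

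For part (ii), since $\abs{\mathrm{X}_{\mathrm{zar}}(\Lambda,\mathbb{C})} = n_K < \infty$, Proposition \ref{c:new} applies and immediately gives $\abs{\mathrm{X}_{\mathrm{zar}}(\Delta,\mathbb{C})} = n_K$.

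For part (iii), the containment $\mathrm{X}_{b,\mathrm{zar}}(\Delta,\overline{\mathbb{Q}_p}) \subseteq \mathrm{X}_{\mathrm{zar}}(\Delta,\overline{\mathbb{Q}_p})$ is automatic, so it suffices to show both sides have cardinality $n_K$ for every finite $p$. For the larger set, \eqref{Eq:SillyEq} together with (ii) gives $\abs{\mathrm{X}_{\mathrm{zar}}(\Delta,\overline{\mathbb{Q}_p})} = \abs{\mathrm{X}_{\mathrm{zar}}(\Delta,\mathbb{C})} = n_K$. For the smaller set, Lemma \ref{L2} supplies bijections
\[
\mathrm{X}_{b,\mathrm{zar}}(\Delta,\overline{\mathbb{Q}_p}) \longleftrightarrow \mathrm{X}_{c,\mathrm{zar}}(\widehat{\Delta},\overline{\mathbb{Q}_p}) = \mathrm{X}_{c,\mathrm{zar}}(\widehat{\Lambda},\overline{\mathbb{Q}_p}) \longleftrightarrow \mathrm{X}_{b,\mathrm{zar}}(\Lambda,\overline{\mathbb{Q}_p}),
\]
where the middle equality uses the fixed isomorphism $\widehat{\Delta} \cong \widehat{\Lambda}$, and the right-hand term has cardinality $n_K$ by (i). Equal finite cardinalities plus the containment then force equality.

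There is no significant obstacle; the only subtlety to keep an eye on is that Lemma \ref{L2a} only produces information for \emph{almost} all primes, whereas (iii) is asserted for \emph{every} finite $p$. The route outlined above sidesteps this issue by routing the $\Delta$ side through $\widehat{\Lambda}$ via Lemma \ref{L2} (which is valid at every finite prime) and using (i), which in turn was upgraded to all primes via Lemma \ref{l:integral-traces} rather than Lemma \ref{L2a}.
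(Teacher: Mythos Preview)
Your proof is correct and follows essentially the same approach as the paper, which simply states that the lemma follows by combining equation \eqref{Eq:SillyEq} with Lemmas \ref{L2}, \ref{l:integral-traces} and Proposition \ref{c:new}. Your write-up spells out the assembly in detail and correctly flags why Lemma \ref{l:integral-traces} (rather than Lemma \ref{L2a}) is needed to get (i) at \emph{every} finite prime.
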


\begin{remark} \label{galois_FA}
If $\G$ is a finitely generated group with \emph{Serre's Property FA} (i.e.~$\G$ cannot act on a tree without a global fixed point), then the $(\P)\SL(2,\C)$--character variety $\mathrm{X}(\G,\C)$ is finite (see \cite{BZ}). In particular, any group with Property FA  has the property that the set  $\mathrm{X}_{\mathrm{zar}}(\Gamma,\C) \subset \mathrm{X}(\Gamma,\C)$ is finite. However,  Property FA and Galois rigidity are distinct forms of rigidity. For instance, the triangle group $\Delta(6,6,6)$ has Property FA but is not Galois rigid. On the other hand, there are rational homology 3--manifolds  $\Sigma$ such that $\Sigma$ is hyperbolic and $\pi_1\Sigma$ is Galois rigid, but $\pi_1\Sigma$ does not have property FA because $\Sigma$ is Haken;  the result of $(10,1)$--Dehn surgery on the knot $5_2$ is such a manifold. There are also hyperbolic 3--manifolds $M$ with $b_1(M) >0$  whose fundamental group is Galois rigid, for example the manifold $M_6$ described in \S \ref{s:weeks_rigidity} below.
\end{remark}

\subsection{Profinite rigidity via Galois rigidity}\label{profinite_from_galois}

We now fix a number field $K$, a quaternion algebra $B/K$, and a maximal order $\mathcal{O} < B$. We will assume that $\Gamma < \mathcal{O}^1$ is a finitely generated subgroup such that $K_\G= K$ and we identify $A_0\G$ with $B$. We denote the inclusion $\Gamma \to B^1$ by $\phi$. We now state the main technical result of this section.

\begin{theorem}\label{T1}
Let $K,B,\mathcal{O}$, and $\Gamma$ be as above, assume that $\Gamma$ is Galois rigid and assume that $\Delta$ is a finitely generated residually finite group with $\widehat{\Delta} \cong \widehat{\Gamma}$. Then there is a number field $K'$, a quaternion algebra $B'/K'$, a maximal order $\mathcal{O}' < B'$, and a Zariski-dense homomorphism $\phi'\colon \Delta \to (\mathcal{O}')^1 < (B')^1 \subset \SL(2,\C)$ such that the following conditions hold:
\begin{itemize}
\item[(i)]
$\Delta$ is Galois rigid with associated field $K'$.
\item[(ii)]
There are bijective functions $\tilde{\tau}\colon E_{K'} \to E_K$ and $\tau\colon V_{K'} \to V_{K}$ with $K'_w \cong K_{\tau(w)}$ for all $w \in V_{K'}$. Hence, $K$ and $K'$ are arithmetically equivalent and have isomorphic adele rings. 
\item[(iii)]
$B'_w \cong B_{\tau(w)}$ for all $w \in V_{K'}^f$. 
\item[(iv)]
Up to isomorphism, there are only finitely many possibilities for $K'$, $B'$, and $\mathcal{O}'$.
\end{itemize}
\end{theorem}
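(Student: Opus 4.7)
My plan is to leverage Lemma~\ref{L:SimRig} and the bijection of Lemma~\ref{L2} to transfer the arithmetic data of $\G$ to $\Delta$ one prime at a time.

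\emph{Existence of $\phi'$ with integral traces.} By Lemma~\ref{L:SimRig}(ii), $\Delta$ has $n_K$ conjugacy classes of Zariski-dense representations into $\SL(2,\C)$; pick any such $\phi'$, and let $K'$ be its trace field (a number field by Lemma~\ref{trace_field_number_field}). To check $\phi'$ has integral traces, I fix a finite prime $p$ and an embedding $\sigma\colon K'\to\overline{\Q_p}$, extend it to a field isomorphism $\C\to\overline{\Q_p}$, and transfer $\phi'$; Lemma~\ref{L:SimRig}(iii) says the transferred representation is bounded, so $\sigma(\mathrm{tr}\,\phi'(\delta))\in R_v$ at the associated place $v$. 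Ranging over $p$ and $\sigma$ yields $\mathrm{tr}\,\phi'(\delta)\in R_{K'}$, so after an innocuous conjugation $\phi'(\Delta)\subset(\mathcal{O}')^1$ for some maximal $R_{K'}$-order $\mathcal{O}'$ of $B':=A_0\phi'(\Delta)$.

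\emph{Matching places and Galois rigidity of $\Delta$.} For each finite $p$, Lemma~\ref{L2} together with $\wh{\G}\cong\wh{\Delta}$ produces an $\mathrm{Aut}_c(\overline{\Q_p})$-equivariant bijection
\[\Psi_p\colon \mathrm{X}_{b,\mathrm{zar}}(\G,\overline{\Q_p})\longrightarrow \mathrm{X}_{b,\mathrm{zar}}(\Delta,\overline{\Q_p})\]
(both sides of cardinality $n_K$ by Lemma~\ref{L:SimRig}) by passage through a common continuous representation of the profinite completion. Galois rigidity of $\G$ identifies the left side canonically with $E_K^p$, with $\mathrm{Aut}_c$-orbits $V_K^p$. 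Given $\psi_p$ on the right, I restrict its continuous extension $\widehat{\psi_p}\colon\wh\Delta\to\SL(2,\overline{\Q_p})$ to $\G$: by Galois rigidity of $\G$ this restriction is $\tau\circ\iota$ for some $\tau\in E_K^p$, so the trace values of $\widehat{\psi_p}$ on $\G$ lie in $\tau(K)$. Density of $\G$ in $\wh\Delta$ and continuity of $\widehat{\psi_p}$ force the closure of the trace values of $\psi_p$ on $\Delta$ to equal the completion $K_v$ at the place $v\leftrightarrow\tau$; since that closure is also a completion of the trace field of the corresponding Galois orbit of $\C$-representations of $\Delta$ (via the transfer of Step 1), the trace field of each Galois orbit is arithmetically equivalent to $K$ and in particular has degree $n_K$. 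Since the Galois orbit sizes sum to $n_K$, there is a single trace field $K'$, so $\Delta$ is Galois rigid and $\Psi_p$ descends to the claimed bijection $V^p_{K'}\to V^p_K$ with $K'_w\cong K_{\tau(w)}$.

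\emph{Finishing (ii)--(iv) and the main obstacle.} The finite-place matchings together say $K'$ is arithmetically equivalent to $K$, giving isomorphic adele rings and preserving signature, so the bijection of places extends to all of $V_{K'}\to V_K$ and $E_{K'}\to E_K$. For (iii), at any finite $w$ with $\tau(w)=v$, the closures of $\phi'(\Delta)$ and $\iota(\G)$ in $\SL(2,\overline{\Q_p})$ coincide under the common continuous representation of $\wh\G=\wh\Delta$, so their local $R_w$-spans generate isomorphic quaternion algebras: $B'_w\cong B_v$. For (iv): every arithmetic equivalence class of number fields is finite; given $K'$, constraint (iii) at finite places together with the finite number of admissible ramifications at infinite places leaves only finitely many $B'$; and each $B'$ has finitely many conjugacy classes of maximal orders. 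The hardest step is the middle paragraph: $\mathrm{Aut}_c$-equivariance of $\Psi_p$ a priori only matches orbit \emph{sizes}, and it takes the interplay of density of $\G$ in $\wh\G$, continuity of the extension $\widehat{\psi_p}$, and Galois rigidity of $\G$ to upgrade this to genuine equality of local fields $K'_w\cong K_v$, which is what drives the rest of the theorem.
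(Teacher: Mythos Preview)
Your approach is essentially the same as the paper's, and most steps are correct and well-identified. But there is a genuine gap, and it is not where you think it is.

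You correctly produce, for each finite prime $p$, a matching between the bounded Zariski-dense representations of $\Gamma$ and of $\Delta$ over $\overline{\Q_p}$ via the common profinite completion, and you correctly observe that density of $\Gamma$ in $\widehat\Gamma$ and continuity of the trace force the closure of the trace values on the $\Delta$-side to equal a completion $K_v$ of $K$. This yields, for the trace field $K'$ of your chosen $\phi'$, an \emph{injective} map $\tau_f\colon V^f_{K'}\to V^f_K$ with $K'_w\cong K_{\tau_f(w)}$. So far so good; this is exactly what the paper does.

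The gap is your assertion that ``the trace field of each Galois orbit is arithmetically equivalent to $K$''. Arithmetic equivalence means a \emph{bijection} of places matching local fields, and what you have produced is only an injection. A priori $n_{K'}$ could be strictly less than $n_K$: for each $p$ you would then have $|E^p_{K'}| = n_{K'} < n_K = |E^p_K|$, and your $\Psi_p^{-1}$ restricted to the $\phi'$-conjugates would hit only a proper subset of $E^p_K$, with the remaining representations of $\Delta$ coming from some other Galois orbit with a different trace field. Nothing you have written rules this out. The paper closes this gap with a separate number-theoretic lemma (Lemma~\ref{L4}, proved in the Appendix via Chebotarev density and a comparison of permutation characters on $\mathrm{Gal}(K_{\rm gal}/\Q)$): an injection $V^f_{K'}\to V^f_K$ matching local fields is automatically a bijection. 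Only after invoking this does one get $n_{K'}=n_K$, hence Galois rigidity of $\Delta$, and the rest of the argument follows. You have correctly identified the density-and-continuity step as delicate, but that step only buys you $K'_w\cong K_v$; upgrading the resulting injection of places to a bijection is the step you are missing.
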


\noindent An extension of Theorem \ref{T1} covering more general algebraic groups is developed in the Ph.D thesis of the fourth author \cite{Spitler}.

\begin{proof}[Proof of Theorem \ref{T1}]
By Lemma \ref{L:SimRig} (ii), $\mathrm{X}_{\mathrm{zar}}(\Delta,\mathbb{C})$ is nonempty. Let $\psi \in \mathrm{X}_{\mathrm{zar}}(\Delta,\mathbb{C})$, set $K' =K_{\psi(\Delta)}$, let $B'/K'$ be the quaternion algebra $A_0\psi(\Delta)$ and let $\phi'\colon \Delta \to (B')^1$ denote the homomorphism given by viewing $B'$ as an abstract quaternion algebra. By Lemma \ref{L:SimRig} (ii), we have $\abs{\mathrm{X}_{\mathrm{zar}}(\Delta,\mathbb{C})} = n_K$, so $n_{K'}\leq n_K$, and we conclude that $K'$ is a number field. To show that $\Delta$ is Galois rigid we need to prove that $n_{K'} = n_K$. This will be done by establishing that $K$ and $K'$ are arithmetically equivalent, from which $n_K = n_{K'}$  follows (see \cite{Per}).   Indeed we will prove that $\mathbb{A}_K$ and $\mathbb{A}_{K'}$, the adele rings of $K$ and $K'$, are isomorphic. 

The first part of this is to construct bijective functions $\tau_p\colon V_{K'}^p \to V_K^p$ for each $p \in \mathrm{P}\setminus \{\infty\}$ such that $K_{v'}' \cong K_{\tau_p(v')}$ for each $v' \in V_{K'}^p$. To that end, using the homomorphism $\phi'\colon \Delta \to (B')^1$ and $p \in \mathrm{P}\setminus \{\infty\}$, we obtain $n_{K'}$ non-conjugate, Zariski-dense representations  $\{\phi'_{\sigma'_1},\dots,\phi'_{\sigma'_{n_{K'}}}\}$ into $\mathrm{SL}(2,\overline{\mathbb{Q}_p})$ as follows.  For $\sigma' \in E_{K'}^p$ associated to $w \in V_{K'}^p$, we have an algebra injection $B' \to B'\otimes_{\sigma'(K')} K'_w \subset \mathrm{M}(2,\overline{\mathbb{Q}_p})$ which induces an injective homomorphism when restricted to $\phi'(\Delta)$ and hence a homomorphism $\phi'_{\sigma'}\colon \Delta \to \mathrm{SL}(2,\overline{\mathbb{Q}_p})$ with Zariski dense image.  Hence $\phi'_{\sigma'} \in \mathrm{X}_{\mathrm{zar}}(\Delta,\overline{\mathbb{Q}_p})$. 
For distinct $\sigma'_1,\sigma'_2 \in E_{K'}^p$, the representations $\phi'_{\sigma'_1},\phi'_{\sigma'_2}$ are not conjugate as they have distinct characters. 

Since $\wh{\G}\cong\wh{\Delta}$,  from Lemma \ref{L:SimRig} (i) we have $\abs{\mathrm{X}_{b,\mathrm{zar}}(\G,\overline{\mathbb{Q}_p})} = n_K$, and by Lemma \ref{L:SimRig} (iii), $\mathrm{X}_{b,\mathrm{zar}}(\Delta,\overline{\mathbb{Q}_p}) = \mathrm{X}_{\mathrm{zar}}(\Delta,\overline{\mathbb{Q}_p})$ for all finite $p$. 

Given this set up, for $\sigma' \in E_{K'}^p$, we can define functions $\tilde{\tau}_p : E_{K'}^p \to E_K^p$ as follows: as noted above, the representation $\phi'_{\sigma'}\colon \Delta \to \mathrm{SL}(2,\overline{\mathbb{Q}_p})$ is Zariski-dense with bounded image; using $\wh{\G}\cong \wh{\Delta}$, we deduce from Lemma \ref{L2} that there is a unique $\sigma \in E_K^p$ so that $\widehat{\phi'_{\sigma'}}$ and $\widehat{\phi_\sigma}$ are conjugate representations, and so we set $\tilde{\tau}_p(\sigma') = \sigma$. Note that $\tilde{\tau}_p\colon E_{K'}^p \to E_K^p$ is injective since the representations induced by distinct embeddings of $K'$ are not conjugate. 

To show that the functions $\tilde{\tau}_p$ give rise to injective functions $\tau_p\colon V_{K'}^p \to V_K^p$, we must show that if $\sigma'_1,\sigma'_2 \in E_{K'}^p$ are $\mathrm{Aut}_c(\overline{\mathbb{Q}_p})$--equivalent, then $\tilde{\tau}_p(\sigma'_1),\tilde{\tau}_p(\sigma'_2) \in E_K^p$ are $\mathrm{Aut}_c(\overline{\mathbb{Q}_p})$--equivalent. Let $\sigma'_1,\sigma'_2 \in E_{K'}^p$ with $\sigma'_2 = \varphi \circ \sigma'_1$ for some $\varphi \in \mathrm{Aut}_c(\overline{\mathbb{Q}_p})$ and set $\sigma_i = \tilde{\tau}_p(\sigma'_i)$ for $i=1,2$. By definition of $\tilde{\tau}_p$, we have continuous Zariski-dense representations 
\[ \widehat{\phi'_{\sigma'_1}},~\widehat{\phi'_{\sigma'_2}},~\widehat{\phi_{\sigma_1}},~\widehat{\phi_{\sigma_2}}\colon \widehat{\Gamma}\longrightarrow \mathrm{SL}(2,\overline{\mathbb{Q}_p}) \]
such that $\widehat{\phi'_{\sigma'_i}}$ is conjugate to $\widehat{\phi_{\sigma_i}}$ for $i=1,2$ and $\widehat{\phi'_{\sigma'_2}} = \varphi_* \circ \widehat{\phi'_{\sigma'_1}}$ where $\varphi_*\colon \mathrm{SL}(2,\overline{\mathbb{Q}_p}) \to  \mathrm{SL}(2,\overline{\mathbb{Q}_p})$ is the automorphism induced by $\varphi$. So up to conjugation, $\widehat{\phi_{\sigma_2}} = \varphi_* \circ \widehat{\phi_{\sigma_1}}$, and restricting to $\Gamma$ gives $\phi_{\sigma_2} = \varphi_* \circ \phi_{\sigma_1}$ up to conjugation, so $\sigma_2 = \varphi \circ \sigma_1$. Thus, $\tilde{\tau}_p$ induces an injective function $\tau_p\colon V_{K'}^p \longrightarrow V_K^p$. Varying $p \in \mathrm{P}\setminus \set{\infty}$ induces injective functions $\tilde{\tau}_f\colon E_{K'}^f \to E_K^f$ and  $\tau_f\colon V_{K'}^f \to V_K^f$. 

Since $\Delta$ and $\Gamma$ are dense in $\widehat{\Delta}\cong\widehat{\Gamma}$, for each finite embedding $\sigma' \in E_{K'}^f$, the sets $\set{\mathrm{tr}~\phi'_{\sigma'}(\delta)}_{\delta \in \Delta}$ and $\set{\mathrm{tr}~\phi_{\tilde{\tau}(\sigma')}(\gamma)}_{\gamma \in \Gamma}$ are dense in 
\[ \set{\mathrm{tr}~\widehat{\phi'_{\sigma'}}(x)}_{x \in \widehat{\Delta}} = \set{\mathrm{tr}~\widehat{\phi_{\tilde{\tau}(\sigma')}}(x)}_{x \in \widehat{\Gamma}}. \] 
Hence, the fields generated by $\set{\mathrm{tr}~\phi'_{\sigma'}(\delta)}_{\delta \in \Delta}$ and $\set{\mathrm{tr}~\phi_{\tilde{\tau}(\sigma')}(\gamma)}_{\gamma \in \Gamma}$ have the same closures and so $K'_w \cong K_{\tau_f(w)}$ where $w \in V_{K'}^f$ is associated to $\sigma'$. To establish that $\tau_f$ is a bijection we need the following lemma. This appears to be well-known, but we could not find a proof in the literature, so we give it in the Appendix.

\begin{lemma}\label{L4}
If $K$, $K'$ are number fields and $\tau_f\colon V_{K'}^f \to V_K^f$ is an injective map with $K'_w \cong K_{\tau_f(w)}$ for all $w \in V_{K'}^f$, then $\tau_f$ is a bijection. 
\end{lemma}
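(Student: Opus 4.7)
The plan is to pass from the hypothesized local data to a character-theoretic statement inside the Galois closure of $KK'$, and then to exploit transitivity of permutation representations to upgrade an inequality into an equality.

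First, I would reduce to a group-theoretic condition via Chebotarev. Let $L$ be the Galois closure over $\Q$ of the compositum $KK'$, and put $G=\mathrm{Gal}(L/\Q)$, $H=\mathrm{Gal}(L/K)$, $H'=\mathrm{Gal}(L/K')$. For a rational prime $p$ unramified in $L$, Frobenius determines a conjugacy class $c_p\subset G$. The places of $K$ (resp.\ $K'$) over $p$ correspond to orbits of $\langle c_p\rangle$ on $G/H$ (resp.\ $G/H'$), with $[K_v:\Q_p]$ equal to the orbit size and $K_v$ equal to the unique unramified extension of $\Q_p$ of that degree. The hypothesis that $\tau_f$ is an injection with $K'_w\cong K_{\tau_f(w)}$ therefore translates, for each unramified $p$, into the combinatorial statement that the multiset of cycle lengths of $c_p$ on $G/H'$ is a sub-multiset of the multiset of cycle lengths of $c_p$ on $G/H$. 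By Chebotarev's density theorem, every conjugacy class of $G$ is realised as some such $c_p$, so the sub-multiset condition holds for every $g\in G$.

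The next step would be to show $n_K=n_{K'}$. Counting 1-cycles (i.e. fixed points), the sub-multiset condition yields $\chi_{H'}(g)\le \chi_H(g)$ for every $g\in G$, where $\chi_H,\chi_{H'}$ denote the respective permutation characters. Because $G$ acts transitively on each of $G/H$ and $G/H'$, Burnside's lemma gives
\[
\sum_{g\in G}\chi_H(g) \,=\, \sum_{g\in G}\chi_{H'}(g) \,=\, |G|.
\]
Combined with the pointwise inequality, these equal totals force $\chi_H=\chi_{H'}$ pointwise, and in particular $n_K=\chi_H(e)=\chi_{H'}(e)=n_{K'}$.

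Finally, I would deduce surjectivity. For each rational prime $p$ (including ramified ones), the hypothesis exhibits $\{K'_w\}_{w\mid p}$ as a sub-multiset of $\{K_v\}_{v\mid p}$ as multisets of isomorphism classes of local completions. The sums of local degrees over these two multisets are $n_{K'}$ and $n_K$, now known to be equal. Since every local degree is a positive integer, a sub-multiset of positive integers with the same total as the ambient multiset must in fact equal it. Hence $\tau_f|_{V_{K'}^p}\colon V_{K'}^p\to V_K^p$ is a bijection for every $p$, and so $\tau_f$ is a bijection. The main obstacle in this strategy is the passage from $\chi_{H'}\le\chi_H$ to $\chi_{H'}=\chi_H$, which hinges critically on the transitivity of both permutation representations; without transitivity one could conclude only equality on average. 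A secondary subtlety is that Chebotarev gives control only at primes unramified in $L$, but this suffices because every conjugacy class of $G$ is realised there, and the ramified-prime data is needed only at the last step after $n_K=n_{K'}$ has been established.
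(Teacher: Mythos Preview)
Your proof is correct and follows essentially the same route as the paper's: pass to a Galois extension containing both fields, translate the local hypothesis into a cycle-length (equivalently, $\langle g\rangle$-orbit) comparison on the coset spaces via Chebotarev, deduce the pointwise inequality $\chi_{H'}\le\chi_H$ of permutation characters, and then use Burnside plus transitivity to force equality and hence $n_K=n_{K'}$. The only notable difference is that you take $L$ to be the Galois closure of $KK'$, whereas the paper first argues that $K'\subset K_{\mathrm{gal}}$ (via the ``splits-completely'' criterion) and then works inside $K_{\mathrm{gal}}$; your choice sidesteps that preliminary step at no cost. Your final degree-sum argument making surjectivity explicit at each $p$ is also a touch more detailed than the paper, which simply concludes from $n_K=n_{K'}$.
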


Given this, $\tau_f$ is a bijection, and hence $\tilde{\tau}_p,\tilde{\tau}_f,\tau_p$, and $\tau_f$ are all bijections. That $\tilde{\tau}_f, \tau_f$ can be extended to bijective functions $\tilde{\tau}\colon E_{K'} \to E_K$ and $\tau\colon V_{K'} \to V_K$ follows from the fact that $\tau_f$ being a bijection implies that $K$ and $K'$ are arithmetically equivalent (see \cite{Per}), and hence $K$ and $K'$ have the same number of real and complex embeddings/places. Hence, $n_K = n_{K'}$. This establishes (i).

Now Iwasawa \cite{Iwasawa} proved that the existence of $\tau$ is equivalent to the fields $K$ and $K'$ having isomorphic adele rings. This establishes (ii). 

To prove (iii), we argue as follows. The algebras $B$ and $B'$ are generated over $K$ and $K'$ by $\phi(\Gamma)$ and $\phi'(\Delta)$ respectively.  Using the bijection $\tau$ established in (ii), these quaternion algebras are also generated over $K$ and $K'$ by $\phi_{\tilde{\tau}(\sigma')}(\Gamma)$ and $\phi'_{\sigma'}(\Delta)$ respectively (up to isomorphism). Moreover, if we now take the algebras generated by $\phi_{\tilde{\tau}(\sigma')}(\Gamma)$ and $\phi'_{\sigma'}(\Delta)$ over $ K_{\tau(w)} = K'_w$ we obtain quaternion algebras $B_{0,\tau(w)}$ and $B'_{0,w}$ that are isomorphic to $B_{\tau(w)}$ and $B'_w$ respectively. Now these quaternion algebras are also generated over $K_{\tau(w)} = K'_w$ by $\widehat{\phi_{\tilde{\tau}(\sigma')}}(\widehat{\Gamma})$ and $\widehat{\phi'_{\sigma'}}(\widehat{\Delta})$. By the construction in the proof of (i), these groups are conjugate, and  so the quaternion algebras $B_{0,\tau(w)}$ and $B'_{0,w}$ are isomorphic.  Hence $B_{\tau(w)}
 $ and $B'_w$ are isomorphic as required.

It remains to show that $\phi'$ has image contained in some maximal order $\mathcal{O}'$ of $B'$. As noted in \S \ref{invariant}, it suffices to show $\set{\mathrm{tr}~\phi'(\delta)}_{\delta \in \Delta} \subset R_{K'}$  To that end, let $R_{w'}$ be the associated local ring for $w' \in V_{K'}^f$. As the representations $\phi'_{\sigma'}$ are bounded for each $\sigma' \in E_{K'}^p$ and each finite $p$, we have 
\[ \set{\mathrm{tr}~\phi'_{\sigma'}(\delta)}_{\delta \in \Delta} \subset R_{w'} \] 
for each $w \in V_{K'}^f$. Since $\set{\mathrm{tr}~\phi'_{\sigma'}(\delta)}_{\delta \in \Delta} \subset K'$ for all $\sigma' \in E_{K'}^f$ and
\[ R_{K'} = \bigcap_{w' \in V_{K'}^f} (K' \cap R_{w'}), \]     
we have $\set{\mathrm{tr}~\phi'(\delta)}_{\delta \in \Delta} \subset R_{K'}$ as needed. Thus, $\phi'(\Delta)$ generates an order over $R_{K'}$ which is contained in some maximal order $\mathcal{O}'$. 

For (iv), we must analyze how $K'$, $B'$, and $\mathcal{O}'_{B'}$ can fail to be uniquely determined. Since $K$ and $K'$ are arithmetically equivalent, they have the same Galois closure, and so there are a finite number of possibilities for $K'$. For each possible $K'$, the quaternion algebra $B'/K'$ is determined by the following information:
\begin{itemize}
\item[(1)]
For each $v \in \mathrm{Ram}_f(B)$, a choice of $w \in \mathrm{Ram}_f(B')$ with $K_v \cong K'_w$. 
\item[(2)]
With (1), we get a bijective function $\mathrm{Ram}_f(B') \to \mathrm{Ram}_f(B)$ and this determines $\mathrm{Ram}_f(B')$. It only remains to determine the possible choices for $\mathrm{Ram}_\infty(B')$. If $\abs{\mathrm{Ram}_f(B)}$ is even, then we can take $S=\mathrm{Ram}_\infty(B')$ for any set $S$ of real places with $\abs{S}$ even. If $\abs{\mathrm{Ram}_f(B)}$ is odd, then we can take $S=\mathrm{Ram}_\infty(B')$ for any set $S$ of real places with $\abs{S}$ odd. 
\end{itemize}
As $\mathrm{Ram}(B')$ is finite, there are only finitely many possibilities for $B'$. Finally, fixing $B'$, there are only finitely many maximal orders $\mathcal{O}'_{B'}$ up to isomorphism. Hence, we can then take our list of possible codomain groups for $\phi'$ to be $(\mathcal{O}'_{B'})^1$ where $K'$, $B'$, and $\mathcal{O}'_{B'}$ each range over all of the above choices. 
\end{proof}

We record a specific corollary that will be utilized in our proofs (a variation of which is also used in \cite{BMRS2}).

\begin{definition}[Locally uniform]
We say that a quaternion algebra $B/K$ is \emph{locally uniform} if for each $v,v' \in V_K^f$ with $K_v \cong K_{v'}$ we have $B_v \cong B_{v'}$.
\end{definition}

\begin{corollary}\label{C1}
Let $\Gamma,\Delta$, $K,K'$, $B$, $B'$ be as in Theorem \ref{T1}. 
\begin{itemize}
\item[(i)]
If $K$ is Galois or has exactly one complex place, then $K' \cong K$. 
\item[(ii)]
If $K' \cong K$, $K$ has at most one real place, and $B$ is locally uniform, then $B' \cong B$.
\item[(iii)]
\begin{itemize}
\item[(a)]
If $K$ is imaginary quadratic and $\mathrm{Ram}(B)=\emptyset$, then $K \cong K'$ and $B \cong B'$.
\item[(b)]
If $n_K=3$, $K$ has one real place, and $\mathrm{Ram}(B) = \set{v_1,v_2}$ where $v_1$ is a real place and $v_2 \in V_K^p$ for a finite $p$ with $V_K^p = \set{v_2}$, then $K \cong K'$ and $B \cong B'$.
\end{itemize}
\item[(iv)]
If $K,B$ satisfy (ii), (iii) (a) or (iii) (b), then there exists a Zariski-dense representation $\phi'\colon \Delta \to \mathcal{O}_B^1$ for some maximal order $\mathcal{O}_B < B$.
\end{itemize}
\end{corollary}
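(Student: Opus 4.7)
The plan is to derive each part of Corollary~\ref{C1} directly from Theorem~\ref{T1} by combining the arithmetic equivalence of $K$ and $K'$ (together with the bijection $\tau\colon V_{K'} \to V_K$ preserving completions) with the local quaternion-algebra data $B'_w \cong B_{\tau(w)}$ at every finite place. The classification of quaternion algebras by ramification sets, and the parity constraint that $\abs{\mathrm{Ram}(B)}$ is even, will do most of the remaining work.

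For (i), arithmetic equivalence preserves the signature and the Galois closure of $K$. If $K$ is Galois, then $K$ is determined by its Dedekind zeta function (a Galois extension of $\Q$ is recovered from its zeta function), so $K' \cong K$. If $K$ has exactly one complex place, the classical theory of arithmetic equivalence (via Gassmann-equivalent subgroups of the common Galois closure) provides the isomorphism $K' \cong K$; for the cases of interest in this paper the degree is $2$ or $3$, where the Galois closure has Galois group $\Z/2$ or $S_3$ and Gassmann equivalence coincides with conjugacy, so no non-trivial equivalence class exists.

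For (ii) and (iii), I would identify $K'$ with $K$ via $\tau$ and use Theorem~\ref{T1}(iii) to match finite ramification. In case (ii), local uniformity says $B_v$ is determined by $K_v$, so $\mathrm{Ram}_f(B') = \mathrm{Ram}_f(B)$. In case (iii)(a) the hypothesis $\mathrm{Ram}(B) = \emptyset$ makes the finite ramification of both algebras empty; in case (iii)(b) the condition $V_K^p = \set{v_2}$ forces the single finite ramified place of $B'$ to correspond to $v_2$ under $\tau$. For the infinite ramification, the hypothesis that $K$ has at most one real place leaves at most two possibilities for $\mathrm{Ram}_\infty(B')$ (empty, or the single real place), and parity of $\abs{\mathrm{Ram}(B')}$ selects the unique choice matching $\mathrm{Ram}_\infty(B)$. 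Thus $\mathrm{Ram}(B') = \mathrm{Ram}(B)$ and so $B' \cong B$ as quaternion algebras over $K' \cong K$.

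Part (iv) is then immediate: Theorem~\ref{T1} already provides a Zariski-dense $\phi'\colon \Delta \to (\mathcal{O}')^1 \subset (B')^1$, and the isomorphism $B' \cong B$ produced in (ii), (iii)(a), or (iii)(b) transports $\mathcal{O}'$ to a maximal order $\mathcal{O}_B < B$ with $\phi'(\Delta) \subset \mathcal{O}_B^1$. The main obstacle is the ``exactly one complex place'' portion of (i), since arithmetic equivalence does not force isomorphism in general; however, for all the specific lattices treated later in the paper, $K$ is either imaginary quadratic or a cubic field with one real place, and in both cases the low-degree / solvable-Galois-closure results make the deduction routine.
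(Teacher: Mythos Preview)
Your approach is essentially the same as the paper's: use arithmetic equivalence from Theorem~\ref{T1}(ii) for part~(i), use the local data from Theorem~\ref{T1}(iii) together with local uniformity and the parity constraint for part~(ii), and derive (iii) and (iv) as immediate consequences. The paper organises (iii)(a) and (iii)(b) as special cases of (i) and (ii) rather than arguing them separately, but the content is identical.

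The one genuine gap is exactly the one you flag yourself: the ``exactly one complex place'' clause of (i). You correctly observe that arithmetic equivalence does not in general force isomorphism, and your fallback to degree $2$ or $3$ (where the Galois closure has group $\Z/2\Z$ or $S_3$ and Gassmann equivalence collapses to conjugacy) handles the applications in the paper but does not prove the corollary as stated. The paper closes this gap by invoking a specific result in the literature: \cite[Cor~1.2]{CHLR} shows that a number field with exactly one complex place is determined up to isomorphism within its arithmetic-equivalence class. So your proof becomes complete once you replace the vague appeal to ``classical theory'' with that citation (or reproduce its argument).
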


\begin{proof}
For (i), by Theorem \ref{T1} (ii), $K,K'$ are arithmetically equivalent, and so have isomorphic Galois closures by \cite[Thm 1]{Per}. In particular, when $K$ is Galois, then $K \cong K'$. If $K$ has exactly one complex place, then $K \cong K'$ by \cite[Cor 1.2]{CHLR}. 

For (ii), we now assume that $K \cong K'$ and view $B'$ as a quaternion algebra over $K$. We also view $\tau_f$ as a bijective function $\tau_f\colon V_K^f \to V_K^f$ with  $K_v \cong K_{\tau(v)}$ and $B'_v \cong B_{\tau(v)}$ for each $v \in V_K^f$. To prove that $B,B'$ are isomorphic, it suffices to prove that $B_v \cong B_v'$ for all $v \in V_K$. For $v \in V_K^f$, set $w = \tau_f^{-1}(v)$. As $K_v \cong K_w$ and $B$ is locally uniform, $B_v \cong B_w$. By Theorem \ref{T1}, we have $B_w \cong B_v'$ and so $B_v \cong B_v'$. It remains to prove $B_v \cong B_v'$ when $v \in V_K^\infty$. At each complex place $v$, we have $B_v \cong B_v' \cong \mathrm{M}(2,\mathbb{C})$. As $K$ has at most one real place, whether or not $\mathrm{Ram}(B)$, $\mathrm{Ram}(B')$ contains the real place (if it exists) depends only on the even/odd parity of $\abs{\mathrm{Ram}_f(B)}$, $\abs{\mathrm{Ram}_f(B')}$. As $\mathrm{Ram}_f(B) = \mathrm{Ram}_f(B')$ by the above, we see that either both $B,B'$ ram
 ify at the real place or both $B,B'$ split over the real place. In particular, $B_v \cong B_v'$ when $v$ is real. This proves that $B_v \cong B_v'$ for all $v \in V_K$ as needed. 

Parts (a) and (b) of (iii) follow from (i) and (ii). Part (iv) follows from (a) and (b) of (iii) and Theorem \ref{T1}.
\end{proof}

\noindent We single out two particular cases of Corollary \ref{C1} that will be important in what follows.

\begin{example}\label{ex1} 
In the setting of Corollary \ref{C1}, suppose that  $\G$ is a non-uniform lattice in $\PSL(2,\C)$ with $K_\Gamma=\Q(\sqrt{-3})$.  In this case $B\cong \mathrm{M}(2,\Q(\sqrt{-3}))$, and selecting $\mathcal{O} = \mathrm{M}(2,\Z[\omega])$ where $\omega^2+\omega+1=0$,  Corollary \ref{C1}(iii)(a) applies. By Corollary \ref{C1}(iv), if $\Delta$ is a finitely generated, residually finite group with $\wh{\Delta}\cong \wh{\G}$, then there is a maximal order $\mathcal{L} < \mathrm{M}(2,\Q(\sqrt{-3}))$ and we have $\phi':\Delta \to \mathcal{L}^1$. In this case, since the class number of $\Q(\sqrt{-3})$ is $1$, $\mathrm{M}(2,\Q(\sqrt{-3}))$ has type number $1$: there is a unique conjugacy class of maximal order in $\mathrm{M}(2,\Q(\sqrt{-3}))$ (see \cite[Ch 7.6]{MR}). Thus we may conjugate so that both $\G$ and $\phi'(\Delta)$ are contained as subgroups of $(\P)\SL(2,\Z[\omega])$. 
\end{example} 

\begin{example}\label{ex2} 
Assume $\G$ is as in Corollary \ref{C1}, that $K_\Gamma=\Q(\theta)$, that $\theta^3-\theta^2+1=0$, and that $B$ is ramified at both the real place of $K_\Gamma$ and the unique place  $\nu$ of norm $5$. Then Corollary \ref{C1}(iii)(b) applies. As in Example \ref{ex1}, we can deduce from  Corollary \ref{C1}(iv) that for any finitely generated, residually finite group with $\wh{\Delta}\cong \wh{\G}$ there is a maximal order $\mathcal{O}_B < B$ with $\Delta \to \mathcal{O}_B^1$. As in Example \ref{ex1}, the class number of $K_\Gamma$ is $1$, so $B$ has type number $1$, and hence we can conjugate so that $\G$ and the image of $\Delta$ are both contained as subgroups of $\mathcal{O}_B^1$.
\end{example}

\subsection{Congruence quotients}\label{congruence_quotients} 

The results and proofs contained in this section allow us to make some additional conclusions about congruence quotients that will be useful later. Suppose that $\G$ is as in the statement of Theorem \ref{T1}, and suppose also that $\G$ has the following property: {\em Every finite quotient of $\G$ the form $(\P)\SL(2,\mathbb{F}_{p^n})$ arises as a quotient of $\widehat{\phi_\sigma}(\widehat{\Gamma})$ for some $\sigma$; i.e.~as a congruence quotient.} Then this property passes to $\Delta$, in the following sense. 

Theorem \ref{T1} gives a Zariski-dense representation $\phi'$ of $\Delta$ and the proof of that theorem exhibits a family of finite quotients of the form $(\P)\SL(2,\mathbb{F}_{p^n})$ 
by restriction of $\widehat{\phi'_{\tilde{\tau}^{-1}(\sigma)}}$ to $\Delta \subset \widehat{\Delta}$. Moreover, since $\wh{\Delta}\cong \wh{\G}$, all such quotients of $\Delta$ (and $\phi'(\Delta)$) will arise as a quotient of $\widehat{\phi'_{\sigma'}}(\widehat{\Delta})$ for some  $\sigma' \in E_{K'}^p$, i.e. as a congruence quotient.
 
This construction can be made more explicit in the setting of Corollary \ref{C1}, where we have a Zariski-dense representation $\phi'\colon \Delta \to \mathcal{O}^1$ for some maximal $R_K$--order $\mathcal{O} \subset B$ (with $K=K_\G=K_{\phi'(\Delta)}$).  For convenience
assume that $\phi'(\Delta)=L < \G$. Let $v\in V_K^f$ with local ring $R_v$ having a local uniformizer $\pi_v$.  Assume that $B$ is unramified at $v$.  In such a case, the finite quotients of the form $(\P)\SL(2,\mathbb{F}_{p^n})$ come from restricting the following sequence of homomorphisms to $\G$ and $L$:
\[ \mathcal{O}^1\hookrightarrow \SL(2,R_v)\rightarrow \SL(2,R_v/\pi_vR_v) \cong \SL(2,\mathbb{F}_{p^n}). \] 
If we denote the composition of these homomorphisms by $\eta_v$, and restrict to $\G$ and to $L$, then $\ker\eta_{v}$ determines subgroups $\G(v)=\G\cap \ker\eta_v$ and $L(v)=L\cap \ker\eta_v$. Our arguments show that $\G/\G(v) = L/L(v)$.

\section{The main players}\label{mainplayers}

In this section we describe some of the important features of the groups that we shall prove are profinitely rigid.

\subsection{The group $\G=\PSL(2,\Z[\omega])$ and other non-uniform lattices of small covolume in ${\rm{Isom}}(\H^3)$}\label{s:small}

Consider a regular ideal tetrahedron $\tau$ in $\H^3$; this is unique up to isometry and we write $v_0$ to denote its volume. Let $c_0$ be the centre of the inscribed sphere in $\tau$. This sphere touches each 2--dimensional face $f$ at the centre $c(f)$ of the inscribed circle in $f$ and this circle touches each edge $e$ in the boundary of $f$ at a point we denote $c(e)$. Let $\xi$ be an ideal vertex of $e$ and consider the ideal tetrahedron $T_0$ with vertices $(\xi,c(e),c(f),c_0)$. Note that $\tau$ decomposes into $24$ copies of $T_0$ corresponding to the different choices of $(\xi,e,f)$; thus $T_0$ has volume $v_0/24$. The dihedral angles of $T_0$ can be calculated by observing the number of translates of $T_0$ around each edge; the angles at the ideal vertices are $\pi/6, \pi/2, \pi/3$ and at the opposite edges they are $\pi/3, \pi/2, \pi/2$. Thus, in the notation of \cite{BLW}, $T_0=T[3,2,2; 6,2,3]$. We write $\Lambda_0$ to denote the subgroup of ${\rm{Isom}}(\H^3)$ gen
 erated by reflections in the hyperplanes containing the faces of $T_0$. This is (up to conjugacy) the unique non-uniform lattice of smallest co-volume in ${\rm{Isom}}(\H^3)$ -- see \cite{Mey}; it has $T_0$ as fundamental domain, so its co-volume is $v_0/24$. Below is a presentation for $\Lambda_0$ that we will use later:
\[ \Lambda_0 = \innp{ x,y,z,w~\mid~ x^2 = y^2 = z^3 = w^2 = (xy)^3 = (xz)^2 = (xw)^2 = (yz)^3 = (yw)^2 = (zw)^6 = 1}. \]
The index $2$ subgroup $\G_0<\Lambda_0$ consisting of orientation-preserving isometries is $\PGL(2,\Z[\omega])$, and from \cite{BLW} we have the presentation
\[ \G_0 = \innp{ x,y,z ~\mid~ x^3= y^2= z^2 = (yx^{-1})^3 = (zx^{-1})^2 = (yz)^6 =1}. \] 
$T_0$ has one face in the boundary of $\tau$ and the union of $T_0$ with its reflection in this face is the tetrahedron $T_1=T[3,2,2; 3,3,3]$. We write $\Lambda_1$ to denote the subgroup of ${\rm{Isom}}(\H^3)$ generated by reflections in the hyperplanes containing the faces of $T_1$; it has covolume $v_0/12$ and is an index 2 subgroup of $\Lambda_0$. The following is the natural (Coxeter) presentation:
\[ \Lambda_1 = \innp{ x,y,z,w ~\mid~ x^2 = y^2 = z^2 = w^2 = (xy)^2 = (xz)^2 = (xw)^3 = (yz)^3 = (yw)^3 = (zw)^3 =1}. \]
The index 2 subgroup of orientation preserving isometries in $\Lambda_1$ is $\G=\PSL(2,\Z[\omega])$; thus $\G$ has covolume $v_0/6$ and as in \cite{BLW} we have the presentation
\begin{equation}\label{present}
\G = \innp{ x,y,z ~\mid~ x^3 = y^2 = z^2 = (yx^{-1})^3 = (zx^{-1})^3 = (yz)^3 = 1}.
\end{equation}

\begin{lemma}\label{l:abelG}
The abelianizations of $\G, \G_0, \Lambda_0, \Lambda_1$ are, respectively, $\Z/3\Z, \Z/2\Z, (\Z/2)^2$ and $\Z/2\Z$.
\end{lemma}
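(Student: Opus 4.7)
The plan is to compute the abelianization of each of the four groups directly from the finite presentations just displayed. For any presentation $\langle g_1,\ldots,g_n \mid r_1,\ldots,r_m\rangle$, the abelianization is the cokernel of the $n \times m$ integer matrix whose columns are the exponent-sum vectors of the relators $r_j$, and the elementary divisors are read off from its Smith normal form.

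I would handle the groups one by one, writing everything additively. Each cyclic relation $g^k=1$ contributes $kg=0$, and each mixed relation $(ab^{\pm 1})^k=1$ contributes $k(a \pm b)=0$. For $\Gamma$, combining $3x = 2y = 2z = 0$ with $3(y-x) = 3(z-x) = 3(y+z) = 0$ forces $y \equiv 3y = 3x \equiv 0$ and likewise $z \equiv 0$ (using $2y=0$ so that $3y=y$, and $3x=0$), leaving $\mathbb{Z}/3$ generated by $x$. For $\Gamma_0$ the analogous reduction with $3(y-x),\ 2(z-x),\ 6(y+z)$ kills both $x$ and $y$ and leaves $\mathbb{Z}/2$ generated by $z$. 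For $\Lambda_1$ every generator is an involution and each odd-exponent relation $(ab)^3 = 1$ collapses in the abelianization to $a = b$; the collection of such relations $(xw)^3,(yz)^3,(yw)^3,(zw)^3$ identifies all four generators, giving $\mathbb{Z}/2$. For $\Lambda_0$ the bookkeeping is slightly more delicate because $z$ has order $3$ rather than $2$, so one tracks the interaction of $3z=0$ with the relations $(xz)^2$, $(yz)^3$, $(zw)^6$ and performs a short Smith normal form computation on the resulting $4 \times 10$ matrix to obtain $(\mathbb{Z}/2)^2$.

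No step is conceptually hard; the entire lemma amounts to linear algebra over $\mathbb{Z}$ on four small matrices. The only real risk is a bookkeeping slip, and this is easily guarded against by independently running each of the four calculations in Magma, along the lines of the supplementary computations in \cite{magma_calcs}.
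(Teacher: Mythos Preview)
Your approach for $\G$, $\G_0$ and $\Lambda_1$ is exactly what the paper does (``readily calculated from the above presentations''), and your sketches for those three are correct.

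For $\Lambda_0$ there is a genuine problem. You explicitly accept the relation $z^3=1$ from the displayed presentation and assert that the Smith normal form yields $(\Z/2)^2$. It does not. From $2x=0$ and $(xz)^2=1$ you get $2z=0$, hence with $3z=0$ you obtain $z=0$; then $(yz)^3=1$ gives $3y=0$, so with $2y=0$ you get $y=0$; and $(xy)^3=1$ then kills $x$. Only $w$ survives, and the abelianization of the group presented in the text is $\Z/2$, not $(\Z/2)^2$. The displayed presentation contains a typo: $z^3$ should read $z^2$, as is clear both from the fact that $\Lambda_0$ is a Coxeter reflection group (so every generator is an involution) and from the Magma code in \cite{magma_calcs}, which uses \texttt{r3\^{}2}. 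With the corrected relation $z^2=1$ your method works instantly: the odd relations $(xy)^3$ and $(yz)^3$ identify $x=y=z$, leaving $\langle x,w\mid 2x=2w=0\rangle\cong(\Z/2)^2$. Your proposed Magma cross-check would have exposed the discrepancy, but the write-up as it stands asserts a false computation.

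It is worth noting that the paper sidesteps this hazard entirely: rather than abelianising the presentation of $\Lambda_0$, it observes that $[\G_0,\G_0]\le[\Lambda_0,\Lambda_0]$ and $[\G_0:[\G_0,\G_0]]=2$, so $\lvert\Lambda_0^{\mathrm{ab}}\rvert\le 4$; since $\Lambda_0$ has two distinct index-$2$ subgroups ($\G_0$ and $\Lambda_1$), the abelianisation must be $(\Z/2)^2$. That argument is immune to the typo.
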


\begin{proof} 
The abelianizations of $\G,\, \G_0$ and $\Lambda_1$ are readily calculated from the above presentations. As $\G_0$ has index 2 in $\Lambda_0$, the abelianization of $\Lambda_0$ has order at most 4. And since $\Lambda_0$ has a further subgroup of index 2, namely $\Lambda_1$, we have $H_1(\Lambda_0,\Z) = (\Z/2\Z)^2$.
\end{proof}

Since $H_1(\Lambda_0,\Z) = (\Z/2\Z)^2$, there is a third subgroup of index $2$ in $\Lambda_0$ besides $\G_0$ and $\Lambda_1$. We denote this subgroup by $\Lambda_2$, and note that since $\Gamma$ is the commutator subgroup of $\Lambda_0$, we have $\Gamma<\Lambda_2$. We will make use of the following presentation for $\Lambda_2$ in the proofs of the main results: 
\[ \Lambda_2 = \innp{x,y,z ~\mid~ x^2 = y^2= z^3 = (zx)^3 = (xy)^6 = zyz^{-1}y = 1}. \]
Note that the abelianization of $\Lambda_2$ is isomorphic to $\Z/6\Z$.

We also include a diagram of the above subgroups for the reader's convenience
\begin{equation}\label{Eq:Natural}
\begin{tikzcd}
& & \Lambda_0 \arrow[rrdd,"2",dash] \arrow[lldd,"2"',dash] \arrow[dd,"2"',dash] & & \\ & & & & \\ \Lambda_1 \arrow[rrdd,"2"', dash] & & \Gamma_0 = \PGL(2,\Z[\omega]) \arrow[dd,"2"', dash] &  &\Lambda_2 \arrow[lldd,"2", dash] \\ & & & & \\ & & \Gamma = \PSL(2,\Z[\omega]) & &
\end{tikzcd}
\end{equation}

\subsection{The five lattices $\G, \G_0, \Lambda_0$, and $\Lambda_1$, $\Lambda_2$}

From consideration of the smallest $3$--dimensional non-compact hyperbolic orbifold, we have identified four lattices containing $\G$, namely $\G_0$, $\Lambda_0$, $\Lambda_1$ and $\Lambda_2$. 

\begin{lemma}\label{l:upLattice} 
The only lattices in $\rm{Isom}(\H^3)$ that contain $\G$ are $\G$, $\G_0$, $\Lambda_0$, $\Lambda_1$, and $\Lambda_2$.
\end{lemma}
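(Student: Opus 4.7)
The plan is to combine a covolume bound with the classification of maximal arithmetic lattices in the commensurability class of $\G = \PSL(2,\Z[\omega])$, and then to enumerate the overlattices of $\G$ inside $\Lambda_0$.

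Let $L$ be any lattice in $\Isom(\H^3)$ containing $\G$. Then $L$ is non-uniform and $\mathrm{covol}(L) = \mathrm{covol}(\G)/[L:\G] = v_0/(6[L:\G])$. By Meyerhoff's theorem \cite{Mey}, the minimum covolume of a non-uniform lattice in $\Isom(\H^3)$ is $v_0/24$ and is realised by $\Lambda_0$, so $[L:\G] \le 4$.

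Next I would show that $L\subset\Lambda_0$. Since $L$ is commensurable with the arithmetic lattice $\G$, Margulis arithmeticity makes $L$ arithmetic, so $L$ is contained in a maximal arithmetic lattice $M$. The quaternion algebra $\mathrm{M}(2,\Q(\sqrt{-3}))$ has type number $1$ (as the class number of $\Q(\sqrt{-3})$ is $1$), so by Borel's classification of maximal arithmetic Kleinian groups every such $M$ is conjugate to $\Lambda_0$; write $M = g\Lambda_0 g^{-1}$. The covolume formula gives $[M:\G]=4$, so $M/\G$ is abelian of order $4$ and hence $[M,M]\subset \G$. On the other hand $[M,M] = g[\Lambda_0,\Lambda_0]g^{-1}$ has index $4$ in $M$, because $\Lambda_0^{\ab}\cong(\Z/2\Z)^2$ by Lemma \ref{l:abelG}; comparing indices yields $[M,M]=\G$, and symmetrically $[\Lambda_0,\Lambda_0]=\G$. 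Hence $\G = g\G g^{-1}$, so $g\in N_{\Isom(\H^3)}(\G)$. Mostow rigidity implies $\Out(\G)$ is finite, so $N_{\Isom(\H^3)}(\G)$ is a lattice; it contains $\Lambda_0$, so minimality of covolume forces $N_{\Isom(\H^3)}(\G)=\Lambda_0$. Therefore $g\in\Lambda_0$, which gives $M=\Lambda_0$ and $L\subset\Lambda_0$.

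Finally, the subgroups of $\Lambda_0$ containing $\G$ correspond bijectively to subgroups of $\Lambda_0/\G\cong(\Z/2\Z)^2$. The five subgroups produce exactly the five lattices in the statement: the trivial subgroup gives $\G$, the three index-$2$ subgroups give $\G_0,\Lambda_1,\Lambda_2$, and the whole group gives $\Lambda_0$. (Note that index $3$ is automatically excluded since $3\nmid 4=[\Lambda_0:\G]$.) The main obstacle in this route is the arithmetic identification of $\Lambda_0$ as the unique (up to conjugacy) maximal lattice in the commensurability class of $\G$; the remaining steps are routine consequences of the covolume bound and the structure of $\Lambda_0/\G$.
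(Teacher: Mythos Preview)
Your argument has a genuine gap. When you write ``$[M:\G]=4$, so $M/\G$ is abelian of order $4$ and hence $[M,M]\subset \G$'', you are assuming that $\G$ is normal in $M$; but this is precisely what you are trying to establish, since your goal is to show that $g$ normalises $\G$. Index $4$ alone does not give normality: a priori $g^{-1}\G g$ could be a non-normal index-$4$ subgroup of $\Lambda_0$ distinct from $\G$. The circularity is that you need $\G \triangleleft M$ to conclude $[M,M]\subset\G$, which you then use to deduce $\G = g\G g^{-1}$.

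The paper's proof sidesteps this by invoking a sharper covolume fact (Adams \cite{Ad} together with Meyerhoff \cite{Mey}): not only is $\Lambda_0$ the unique non-uniform lattice of covolume $v_0/24$, but \emph{every other} lattice in its commensurability class has covolume at least $v_0/12$. Hence any proper overlattice $\Delta\supsetneq\G$ with $\Delta\neq\Lambda_0$ satisfies $[\Delta:\G]=2$, so $\G\triangleleft\Delta$ automatically. Mostow rigidity then gives $N_{\Isom(\H^3)}(\G)=\Lambda_0$ (exactly as in your argument), so $\Delta\subset\Lambda_0$ has index $2$, and one reads off the three possibilities. This avoids both your normality gap and the detour through Borel's classification of maximal arithmetic lattices --- which, incidentally, is formulated for $\PSL(2,\C)$ rather than the full isometry group $\Isom(\H^3)$, and so would require additional care here in any case.
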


\begin{proof} 
$\Lambda_0$ is the unique non-uniform lattice of minimal co-volume $v_0/24$ and all other lattices commensurable with $\Lambda_0$ have co-volume at least $v_0/12$ (see \cite{Ad} and \cite{Mey}). It follows that $\G$, which has co-volume $v_0/6$, must have index $2$ in any lattice $\Delta\neq \Lambda_0$ that properly contains it. This forces $\Gamma$ to be normal in $\Delta$. By Mostow Rigidity, the normaliser of $\G$ is itself a lattice; and since it contains $\Lambda_0$ it must be equal to $\Lambda_0$. Thus $\Delta$ is a subgroup
of index 2 in $\Lambda_0$.   
\end{proof}

\subsection{The orbifolds $\H^3/\G$ and $\H^3/\G_0$}

The quotient orbifolds $\H^3/\G_0$ and $\H^3/\G$ are shown in Figure 1. Note that each of these orbifolds has a single cusp, with cusp cross-section homeomorphic to a Euclidean $2$--orbifold which is a $2$--sphere with three cone points. In the case of $\H^3/\Gamma_0$, this orbifold is $S^2(2,3,6)$, which is $S^2$ with three cone points with cone angles $\pi$, $2\pi/3$ and $\pi/3$. In the case of  $\H^3/\Gamma$ it is $S^2(3,3,3)$; i.e.~the cone angles are all $2\pi/3$.

\begin{figure}[h]
\centering

\begin{tikzpicture} [scale=.5]
  \coordinate (O) at (0,0);
  \coordinate (A) at (0,3.4);
  \coordinate (B) at (0,1.2);
  \coordinate (C) at (-1.6,-1.4);
  \coordinate (D) at (1.6,-1.4);
  \coordinate (E) at (-3.2,-2.4);
  \coordinate (F) at (3.2,-2.4);
  
  \filldraw (A) circle (1pt);
  \filldraw (B) circle (1pt);
  \filldraw (C) circle (1pt);
  \filldraw (D) circle (1pt);
  \filldraw (E) circle (1pt);
  \filldraw (F) circle (1pt);
  
  \draw (A) to [edge label = $3$] (B);
  \draw (B) to [edge label' = $2$] (C);
  \draw (B) to [edge label = $3$] (D);
  \draw (C) to [edge label' = $2$] (D);
  \draw (C) to [edge label' = $3$] (E);
  \draw (D) to [edge label = $3$] (F);

  \draw [densely dashed] (-4,0) arc [start angle = 180, end angle = 0,
    x radius = 40mm, y radius = 8mm];
  \draw [white, line width = 5pt](-4,0) arc [start angle=-180, end angle = 0,
    x radius = 40mm, y radius = 8mm];
  \draw (-4,0) arc [start angle=-180, end angle = 0,
    x radius = 40mm, y radius = 8mm];
    
  \draw (O) circle [radius=4cm];
  
  \draw node at (0,-5) {$\mathbb{H}^3/\Gamma$};

 \coordinate (O) at (10,0);
  \coordinate (A) at (10,3.4);
  \coordinate (B) at (10,1.2);
  \coordinate (C) at (8.4,-1.4);
  \coordinate (D) at (11.6,-1.4);
  \coordinate (E) at (6.8,-2.4);
  \coordinate (F) at (13.2,-2.4);
  
  \filldraw (A) circle (1pt);
  \filldraw (B) circle (1pt);
  \filldraw (C) circle (1pt);
  \filldraw (D) circle (1pt);
  \filldraw (E) circle (1pt);
  \filldraw (F) circle (1pt);
  
  \draw (A) to [edge label = $6$] (B);
  \draw (B) to [edge label' = $2$] (C);
  \draw (B) to [edge label = $2$] (D);
  \draw (C) to [edge label' = $3$] (D);
  \draw (C) to [edge label' = $3$] (E);
  \draw (D) to [edge label = $2$] (F);

  \draw [densely dashed] (6,0) arc [start angle = 180, end angle = 0,
    x radius = 40mm, y radius = 8mm];
  \draw [white, line width = 5pt](6,0) arc [start angle=-180, end angle = 0,
    x radius = 40mm, y radius = 8mm];
  \draw (6,0) arc [start angle=-180, end angle = 0,
    x radius = 40mm, y radius = 8mm];
    
  \draw (O) circle [radius=4cm];
  
  \draw node at (10,-5) {$\mathbb{H}^3/\Gamma_0$};
\end{tikzpicture}
\caption{}
\end{figure}
These cusps are {\em{rigid}} in the sense that no non-trivial Dehn surgery can be performed on them (see \cite{DuMe}). This cusp-rigidity is a crucial feature of $\G$ and $\G_0$. Both groups have Serre's property FA and $\G$ is the unique Bianchi group with this property; see \cite{Fi}. As noted earlier (see Remark \ref{galois_FA}), it follows that the $\PSL(2,\C)$--character varieties of these groups, $\mathrm{X}(\G,\C)$ and $\mathrm{X}(\G_0,\C)$ are both finite. Unfortunately, our proof requires that $\G$ be Galois rigid. We shall deduce this from the tight control that one can get on the representations of $\G$ and $\G_0$ over finite fields; this is the subject of \S \ref{fewchars}.

\subsection{The Weeks manifold}\label{weeks}

Let  $M_W={\Bbb H}^3/\Gamma_W$ denote the Weeks manifold; this is the unique closed orientable hyperbolic 3--manifold of minimal volume.  It can be obtained by performing  $(-5,1)$ surgery on one component of the Whitehead link and $(5,2)$ surgery on the other.  Using SnapPy \cite{CDW}, a presentation for $\G_W$ can be computed and we record it for future use:
\[ \innp{a,b~|~ababa^{-1}b^2a^{-1}b,abab^{-1}a^2b^{-1}ab}. \]
From this, we see that $H_1(M_W,{\Bbb Z})\cong {\Bbb Z}/5{\Bbb Z}\times {\Bbb Z}/5{\Bbb Z}$.

We shall need some facts about the arithmetic structure of the Weeks manifold $M_W$ (see \cite[Ch 4.8.3, Ch 9.8.2]{MR} for more details). The invariant trace-field of $\G_W$ is $K=K\Gamma_W={\Bbb Q}(\theta)$ where $\theta^3-\theta^2+1=0$ is as in Example \ref{ex2} (a field of discriminant $-23$). In addition, the invariant quaternion algebra $B_W$ coincides with the algebra $B$ of Example \ref{ex2}. As noted in Example \ref{ex2},  there is a unique conjugacy class of maximal orders in $B_W$ and for a choice of maximal order $\mathcal{O}\subset B_W$, the group $\Gamma_W$ is a normal subgroup of index $3$ in the group of units $\G_\mathcal{O}^1$. As shown in \cite{MedV}, the orbifold $\H^3/\G_\mathcal{O}^1$ can be described as $(3,0)$ Dehn surgery on $5_2$ (i.e.~$M_W$ is the $3$--fold cyclic branched cover of the knot $5_2$). Furthermore, the minimal orbifold in the commensurability class of $\G_W$ arises from a maximal arithmetic group $\Gamma_{\mathcal{O}}$ which is the normalizer of $\G_\mathcal{O}^1$ in $\PSL(2,\C)$, and $\Gamma_{\mathcal{O}}/\G_\mathcal{O}^1\cong \Z/2\Z\times \Z/2\Z$. Since $M_W$ is non-Haken, $\mathrm{X}(\G_W,\C)$ consists of a finite number of points \cite{BZ}.  By \cite[Cor 6.3]{ReWan}, $\G_W$ has very few irreducible representations into $\PSL(2,\C)$.

\begin{proposition}\label{reidwang}
In $\mathrm{X}(\G_W,\C)$, there only three characters of irreducible representations, namely the characters of the faithful discrete representation, its complex conjugate, and a $\PSU(2)$--representation arising from the real ramified place of $B_W$.
\end{proposition}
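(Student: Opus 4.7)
The plan is to exhibit three distinct irreducible characters in $\mathrm{X}(\G_W,\C)$ coming from the arithmetic structure of $\G_W$, and then invoke the cited result of Reid--Wang to rule out any further irreducible characters.

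First, I would construct the three representations through archimedean localisation. The invariant trace-field $K=\Q(\theta)$ has signature $(1,1)$, so there are precisely three embeddings of $K$ into $\C$: a real embedding $\sigma_\infty$ and a complex-conjugate pair $\sigma,\bar\sigma$. Fixing a maximal order $\mathcal{O}\subset B_W$ with $\G_W$ contained in $\G_\mathcal{O}^1$, composition of the inclusion $\G_W\hookrightarrow \mathcal{O}^1\subset B_W^1$ with each localisation $B_W^1\to (B_W\otimes_{K,\tau}K_v)^1$ yields a representation of $\G_W$. At the complex places, $B_W$ is unramified, so $B_W\otimes_{K,\sigma}\C\cong \mathrm{M}(2,\C)$, and the resulting representations are (conjugate to) the faithful discrete embedding of $\G_W$ into $\PSL(2,\C)$ and its complex conjugate. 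At the real place, $B_W$ is ramified by hypothesis, so $B_W\otimes_{K,\sigma_\infty}\R$ is Hamilton's quaternions $\mathbb{H}$, whose reduced-norm-one units form $\mathbb{H}^1\cong \SU(2)\subset\SL(2,\C)$; passing to $\PSL(2,\C)$ produces the claimed $\PSU(2)$-representation.

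Next, I would verify that these three representations have pairwise distinct characters and are irreducible. Distinctness of characters follows from the fact that the embeddings $\sigma_\infty,\sigma,\bar\sigma$ act differently on the invariant trace-field generated by $\{\tr(\gamma^2):\gamma\in\G_W\}=K$. Irreducibility of the first two is immediate from faithfulness of the inclusion of the lattice. For the third, irreducibility follows from the fact that $\G_W^{(2)}$ generates $B_W$ as a $K$-algebra (by definition of the invariant quaternion algebra), so the image under the real localisation generates $\mathbb{H}$ over $\R$, forcing the image in $\SU(2)$ to be Zariski-dense and hence preserving no one-dimensional subspace of $\C^2$.

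Finally, for the upper bound I would directly invoke \cite[Cor 6.3]{ReWan}, which bounds the number of irreducible characters of $\G_W$ in $\mathrm{X}(\G_W,\C)$. Since the three characters produced above are already irreducible and pairwise distinct, this bound is attained and the enumeration is complete. The main obstacle here is the upper bound itself: it does not follow from Property FA or general character-variety finiteness alone (one only obtains finiteness of the zero-dimensional variety), but rather relies on the bespoke analysis in \cite{ReWan} that exploits the specific arithmetic and topological features of the Weeks manifold (its construction as a branched cover of the knot $5_2$, its very small homology, and the structure of its commensurator).
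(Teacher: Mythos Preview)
Your proposal is correct and follows the same approach as the paper: the paper simply attributes the proposition to \cite[Cor~6.3]{ReWan} without further argument, and your write-up merely makes explicit the construction of the three Galois-conjugate representations (which is already encoded in the statement) before invoking that citation for the upper bound. Your added explanation of where the three characters come from and why they are irreducible and distinct is accurate and helpful, but the substance of the proof in both cases is the appeal to Reid--Wang.
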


\begin{corollary}\label{weeks_galois_rigid}
$\Gamma_W$ is Galois rigid.
\end{corollary}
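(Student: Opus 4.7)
The plan is to combine Proposition \ref{reidwang}, which enumerates the irreducible characters of $\G_W$, with the Galois conjugate construction that precedes Definition \ref{def:galois-rigid}.

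First I would identify the trace field $K_{\G_W}$. Since $\G_W$ has abelianization $(\Z/5\Z)^2$ by \S\ref{weeks}, it admits no $\Z/2\Z$--quotient, and so by the discussion in \S\ref{invariant} the trace field $K_{\G_W}$ coincides with the invariant trace field $K\G_W = \Q(\theta)$, a number field of degree $n_{K_{\G_W}} = 3$. To establish Galois rigidity it therefore suffices to prove $\abs{\mathrm{X}_{\mathrm{zar}}(\G_W, \C)} = 3$.

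For the lower bound I would apply the construction of Galois conjugate representations described in \S\ref{s:galois-rigid} to the faithful discrete representation $\rho_0 \colon \G_W \hookrightarrow \PSL(2,\C)$, which is Zariski-dense because its image is a lattice (Borel density). The three embeddings of $K\G_W$ into $\C$ produce three pairwise non-conjugate Zariski-dense representations, giving $\abs{\mathrm{X}_{\mathrm{zar}}(\G_W, \C)} \geq 3$. These three Galois conjugates match the three irreducible characters listed in Proposition \ref{reidwang}: the faithful discrete representation and its complex conjugate correspond to the two complex embeddings of $K\G_W$, while the unique real embedding gives the $\PSU(2)$ representation, because $B_W$ ramifies at the real place and so $B_W \otimes \R$ is isomorphic to the Hamilton quaternions, whose norm-one elements form $\SU(2)$.

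For the reverse bound I would note that any Zariski-dense representation into $(\P)\SL(2,\C)$ is irreducible, since a reducible representation has image in a (proper) Borel subgroup. Proposition \ref{reidwang} provides exactly three characters of irreducible representations of $\G_W$, and since absolutely irreducible representations with equal characters are conjugate, this gives $\abs{\mathrm{X}_{\mathrm{zar}}(\G_W, \C)} \leq 3$. Combining the two bounds yields the required equality $\abs{\mathrm{X}_{\mathrm{zar}}(\G_W, \C)} = n_{K_{\G_W}} = 3$, proving Galois rigidity. The argument is essentially an accounting exercise once Proposition \ref{reidwang} is in hand, so I do not anticipate a substantive obstacle; the only point meriting explicit verification is the identification of the $\PSU(2)$ character with the Galois conjugate at the real ramified place, which is immediate from the structure of $B_W$.
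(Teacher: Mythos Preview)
Your proposal is correct and matches the paper's approach: the paper states the corollary without proof, treating it as immediate from Proposition \ref{reidwang}, and the details you supply are precisely the intended ones. The only implicit step worth noting is that the $\PSU(2)$--representation is indeed Zariski-dense in $\PSL(2,\C)$ (since $\SU(2)$ is a real form of $\SL(2,\C)$), which you correctly identify with the Galois conjugate at the real place; with that observation, Proposition \ref{reidwang} gives both bounds at once and the equality $|\mathrm{X}_{\mathrm{zar}}(\G_W,\C)|=3=n_{K_{\G_W}}$ follows.
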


\section{$\PSL(2,\Z[\omega])$ has very few characters}\label{fewchars}

In this section we prove the following result, which establishes that $\PSL(2,\Z[\omega])$ is Galois rigid.

\begin{theorem}\label{2chars} 
$\G=\PSL(2,\Z[\omega])$ has two characters of Zariski-dense representations in $\PSL(2,\C)$, namely the characters associated to the inclusion homomorphism and its complex conjugate.  In particular, $\G$ is Galois rigid.
\end{theorem}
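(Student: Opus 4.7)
The plan is to enumerate the $\PSL(2,\C)$-character variety of $\G$ directly and show that precisely two of its points come from Zariski-dense representations, namely the inclusion and its complex conjugate. Since $\G$ has Serre's property FA (Remark \ref{galois_FA}), $\mathrm{X}(\G,\C)$ is finite, so the task reduces to finite combinatorics. The inclusion has trace field $K_\G = \Q(\sqrt{-3})$, and its complex conjugate gives a second, non-conjugate Zariski-dense character, so already $\abs{\mathrm{X}_{\mathrm{zar}}(\G,\C)} \ge 2 = n_{K_\G}$; establishing equality will prove both the theorem and Galois rigidity.

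To carry out the enumeration I would lift representations to $\SL(2,\C)$ (passing to an index-two central extension where the obstruction to lifting is nontrivial) and parameterize up to conjugacy by $(X,Y,Z) := (\rho(x),\rho(y),\rho(z))$, subject to the relations in \eqref{present}. The torsion relations force $\tr X \in \set{\pm 1}$ (excluding the reducible case $X = \pm I$) and $\tr Y = \tr Z = 0$, while the Fricke identity $\tr(AB^{-1}) = \tr(A)\tr(B) - \tr(AB)$ gives $\tr(YX^{-1}) = -\tr(YX)$ and $\tr(ZX^{-1}) = -\tr(ZX)$. Consequently the relations $(yx^{-1})^3 = (zx^{-1})^3 = (yz)^3 = 1$ confine $\tr(YX), \tr(ZX), \tr(YZ)$ to $\set{\pm 1}$, and $\tr(XYZ)$ is then pinned to finitely many values by the standard Fricke polynomial among the seven generating traces of a triple. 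This reduces the classification of characters to an explicit finite list of sign patterns, each of which may be tested (in practice via Magma, as the paper does elsewhere) for the existence of a genuine irreducible representation.

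It remains to eliminate every candidate other than those of the inclusion and its conjugate. Some candidates will immediately collapse to reducible or finite-image representations and hence are not Zariski-dense; the two known characters arise from the sign pattern whose trace field is $\Q(\sqrt{-3})$. For any residual candidate, I would invoke the extension of the Paoluzzi--Zimmermann classification \cite{PZ} of epimorphisms $\G \to \PSL(2,\F)$ promised in the introduction: any hypothetical third Zariski-dense character has integral traces (by Lemma \ref{trace_field_number_field} together with the fact that $\G$ lies in an order of its invariant quaternion algebra), so reducing modulo finite places produces epimorphisms $\G \to \PSL(2,\F_q)$; the classification should show that every such epimorphism arises from the natural congruence quotients of $\PSL(2,\Z[\omega])$, forcing the character to coincide with one already accounted for. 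The principal obstacle is therefore this last step, which requires the fine control on finite simple quotients that the \cite{PZ} extension delivers; the Fricke enumeration merely reduces the representation-theoretic problem to the tractable arithmetic problem of classifying those quotients.
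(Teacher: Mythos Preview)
Your direct Fricke-coordinate enumeration is a genuinely different route from the paper's, and it is viable in principle: with $\tr Y=\tr Z=0$, $\tr X\in\{\pm 1\}$, and $\tr(YX),\tr(ZX),\tr(YZ)\in\{\pm1\}$ you get a finite list of $7$-tuples, and for $\G$ one can check that irreducible with infinite image implies Zariski-dense (any irreducible image contained in the normalizer of a torus would force a $\Z/2\Z$ quotient, which $\G$ lacks). If you actually carry out the enumeration and eliminate the finite-image characters, you would have an elementary proof avoiding Strong Approximation entirely. The paper does \emph{not} do this; it never computes the character variety.

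However, your fallback paragraph, which is where you place the real weight, has two substantive gaps relative to the paper's argument. First, ``reducing modulo finite places produces epimorphisms $\G\to\PSL(2,\F_q)$'' is not automatic: integrality of traces gives you a \emph{map} to $\PSL(2,R_{K\Delta}/\mathcal{P})$, but surjectivity onto $\PSL(2,\F_q)$ for almost all $\mathcal{P}$ is precisely Weisfeiler's Strong Approximation Theorem (Theorem~\ref{RepSec:T1}), which you do not invoke. Second, and more seriously, the implication ``every $\PSL(2,\F_q)$ quotient is a congruence quotient of $\PSL(2,\Z[\omega])$, hence the character coincides with the inclusion'' is where almost all of the paper's work lies. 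The paper argues: the splitting types of rational primes in $K\Delta$ must match those in $\Q(\omega)$ for all but finitely many primes, so $K\Delta=\Q(\omega)$ by Perlis~\cite{Per}; then a separate argument using the filtration $\mathcal{O}_i^1>G_1^i>\cdots$ of local unit groups, together with $\G^{\ab}\cong\Z/3\Z$ and $[\G,\G]^{\ab}\cong(\Z/2\Z)^2$, shows $A\Delta$ is unramified everywhere, hence $A\Delta\cong\mathrm{M}(2,\Q(\omega))$; finally type number $1$ conjugates $\Delta$ into $\G$, and residual finiteness plus Mostow Rigidity forces $\rho$ to be the inclusion or its conjugate. None of this chain is visible in your sketch. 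If you intend the Fricke enumeration to be the actual proof, you should complete it and drop the fallback; if you intend the fallback to carry the argument, you need Strong Approximation, the Perlis step, and the quaternion-algebra ramification analysis.
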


For brevity, throughout this section $\G$ will always denote $\PSL(2,\Z[\omega])$.
\subsection{$\PSL(2,\F)$--quotients}\label{Paoluzzi_Zimmermann}

We begin by describing $\PSL(2,{\F})$ quotients where $\F$ is a finite field. We shall see that homomorphisms from $\G$ onto $\PSL(2,\F)$, with $\F$ a finite field, arise exclusively from the arithmetic of $\Z[\omega]$. This result is due in large part to L. Paoluzzi and B. Zimmermann \cite[Thm 6.3]{PZ}. First we need some notation. Given a prime $p\in \Z$, the ideal $p\Z[\omega]$ is a prime ideal  of $\Z[\omega]$ if $p= -1~\mod~6$ or $p=2$, and splits as a product of two distinct prime ideals $\mathcal{P}_1\mathcal{P}_2$ if
$p = 1~\mod~6$. If $p=3$ then $3\Z[\omega] = \<\sqrt{-3}\>^2$, the square of a prime ideal. In the first of these three cases, the residue class field $\Z[\omega]/\mathcal{P}$ is a field with $p^2$ elements. In the remaining cases, for each prime $\mathcal{P}$ that arises, the field $\Z[\omega]/\mathcal{P}$ has $p$ elements. Each of the ring homomorphisms $\Z[\omega]\rightarrow \Z[\omega]/\mathcal{P}$ induces a {\em reduction homomorphism} $\pi_{\mathcal{P}}\colon\PSL(2,\Z[\omega])\rightarrow \PSL(2,\Z[\omega]/\mathcal{P})$. These homomorphisms are onto, and therefore pick out the following collection of finite quotients of $\Gamma$:

\begin{enumerate}
\item $\G\onto\PSL(2,{\Bbb F}_{p^2})$ when $p=2$ or $p=-1~\mod~6$;
\item a pair of quotient maps $\G\onto\PSL(2,{\F}_p)$ when $p=1~\mod~6$;
\item a single quotient map $\G\onto\PSL(2,{\F}_p)$ when $p=3$.
\end{enumerate}

\noindent We will denote this collection of finite quotients of $\G$ by ${\bf P}$. When the meaning is clear, we shall say $\PSL(2,{\F}_p)\in {\bf P}$, but more formally the elements of ${\bf P}$ are the normal subgroups $\Gamma(\mathcal{P}) < \G$ that arise as the kernels of the reduction homomorphisms $\pi_{\mathcal{P}}$. (Thus we identify two surjections if they differ by composition with an automorphism of the target.) The kernels $\Gamma(\mathcal{P})$ are torsion-free apart from when $\mathcal{P}=\<\sqrt{-3}\>$, in which case $\Gamma(\mathcal{P})$ has elements of order $3$, for example the image in $\PSL(2,\C)$ of the element $\begin{pmatrix} \omega & 0\cr 0& \omega^2\cr\end{pmatrix}$. It is shown in \cite{Alp} that there is a unique normal subgroup of index $12$ in $\G$, and this coincides with $\G(\<\sqrt{-3}\>)$. (Although \cite{Alp} mistakenly labels $\G$ as $\PSL(2,\Z[\sqrt{-3}])$, the results in the paper actually refer to $\G$.)

The results of \cite{PZ} (in particular Theorem 6.3) classify {\em admissible} epimorphisms  of $\G$ onto groups of the form $\PSL(2,\F)$ with $\F$ a finite field, where admissible means that the kernel is assumed to be torsion-free. In the applications that we require, torsion cannot be avoided, and we therefore require the following lemma to augment  \cite{PZ}.  

\begin{lemma}\label{torsion_normal}
Let $\Omega$ be a proper normal subgroup of  $\G$ and suppose that $\Omega$ contains a non-trivial element of finite order. Then $\Omega$ has index $3$ or $12$ in $\G$: in the first case, $\Omega$ is the commutator subgroup of $\G$, and in the second case $\Omega=\G(\<\sqrt{-3}\>)$.
\end{lemma}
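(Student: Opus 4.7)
The plan is to enumerate the conjugacy classes of nontrivial torsion elements of $\G$, to compute the normal closure in $\G$ of a representative of each class directly from the presentation \eqref{present}, and then to read off the lemma by inspecting the normal overgroups of these normal closures.

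First, every nontrivial torsion element $\gamma\in\G$ has order $2$ or $3$: a finite-order lift $\tilde\gamma\in\SL(2,\Z[\omega])$ has eigenvalues $\zeta,\zeta^{-1}$ for a root of unity $\zeta$, so $\tr(\tilde\gamma) = \zeta+\zeta^{-1}\in\Z[\omega]\cap\R = \Z$ with $|\tr(\tilde\gamma)|\le 2$. The admissible traces $-2,-1,0,1,2$ correspond to $\zeta$ being an $n$th root of unity for $n\in\{1,2,3,4,6\}$; primitive $8$th and $12$th roots are excluded because their traces $\pm\sqrt{2},\pm\sqrt{3}$ do not lie in $\Z[\omega]$. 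Hence $\tilde\gamma$ has order dividing $6$, and $\gamma$ has order $2$ or $3$. The singular locus of the orbifold $\H^3/\G$ in Figure~1 consists of two arcs labelled $2$ and four arcs labelled $3$; these enumerate the conjugacy classes of nontrivial cyclic finite subgroups of $\G$, and representatives can be read off from \eqref{present} as $\langle y\rangle,\langle z\rangle$ (order $2$) and $\langle x\rangle,\langle yx^{-1}\rangle,\langle zx^{-1}\rangle,\langle yz\rangle$ (order $3$). This enumeration can also be confirmed directly in Magma.

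Next, I compute the normal closure of each representative by adjoining the corresponding relation to \eqref{present}. Setting $y=1$ reduces $z^2=(yz)^3=1$ to $z^2=z^3=1$, forcing $z=1$; the quotient is $\langle x\mid x^3\rangle\cong\Z/3\Z$, so $\langle\langle y\rangle\rangle=[\G,\G]$ (the unique normal subgroup of index $3$). By the same computation applied to $z$, $\langle\langle z\rangle\rangle=[\G,\G]$. Setting $x=1$ (or $yx^{-1}=1$, or $zx^{-1}=1$) forces every generator to become trivial: for instance, $yx^{-1}=1$ gives $y=x$, and combining $y^2=1$ with $x^3=1$ gives $x=y=1$, after which $z^3=z^2=1$ gives $z=1$. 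Hence each of these three normal closures equals $\G$. Setting $yz=1$ identifies $z$ with $y$ and reduces \eqref{present} to $\langle x,y\mid x^3=y^2=(yx^{-1})^3=1\rangle\cong\Delta(2,3,3)\cong A_4$; since by \cite{Alp} the unique normal subgroup of index $12$ in $\G$ is $\G(\langle\sqrt{-3}\rangle)$, we obtain $\langle\langle yz\rangle\rangle=\G(\langle\sqrt{-3}\rangle)$.

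Finally, suppose $\Omega\triangleleft\G$ is proper and contains a nontrivial torsion element $t$. Then $\Omega$ contains $\langle\langle t\rangle\rangle\in\{[\G,\G],\G(\langle\sqrt{-3}\rangle),\G\}$, and properness rules out $\G$. The only proper normal overgroup of $[\G,\G]$ is $[\G,\G]$ itself, since $\G/[\G,\G]\cong\Z/3\Z$ has no proper subgroup. The normal subgroups of $\G/\G(\langle\sqrt{-3}\rangle)\cong A_4$ are $\{1\},V_4,A_4$, whose preimages in $\G$ are $\G(\langle\sqrt{-3}\rangle),[\G,\G],\G$ respectively (the middle one because $\G\to A_4$ carries $[\G,\G]$ onto $[A_4,A_4]=V_4$, and the indices $[\G:[\G,\G]]=[A_4:V_4]=3$ agree). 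Thus $\Omega\in\{[\G,\G],\G(\langle\sqrt{-3}\rangle)\}$, as claimed. The main obstacle is justifying the enumeration of conjugacy classes of torsion in the second paragraph; I would rely on the geometric interpretation of the singular locus in Figure~1, supplemented by a direct Magma computation from \eqref{present} if desired.
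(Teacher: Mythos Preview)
Your proof is correct. The paper takes a closely related but slightly different route: rather than enumerating conjugacy classes of cyclic torsion subgroups (the edge stabilizers of the singular locus) and computing their normal closures, it invokes the Cartan fixed-point theorem to place any nontrivial torsion element inside one of the three \emph{vertex} stabilizers $\Omega_1=\langle y,z\rangle\cong S_3$, $\Omega_2=\langle x,z\rangle\cong A_4$, $\Omega_3=\langle x,y\rangle\cong A_4$, and then analyzes the possible values of the normal subgroup $\Omega\cap\Omega_i\lhd\Omega_i$. The underlying presentation computations are identical in the two arguments --- setting $x=1$ trivializes $\G$, setting $y=1$ or $z=1$ gives $\Z/3\Z$, setting $yz=1$ gives $A_4$ --- and both appeal to \cite{Alp} for the uniqueness of the index-$12$ normal subgroup. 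The paper's vertex-group packaging avoids your trace calculation (orders $2$ and $3$ are automatic inside $S_3$ and $A_4$) and the need to match the six edge labels of Figure~1 to specific words in \eqref{present}; conversely, your normal-closure packaging makes the final step (listing the proper normal overgroups of $[\G,\G]$ and of $\G(\langle\sqrt{-3}\rangle)$) cleaner than the paper's case-by-case elimination.
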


\begin{proof} 
By the Cartan fixed-point theorem, every finite subgroup of $\G$ fixes a point in $\H^3$ and is therefore conjugate into one of the vertex-stabilizer groups for the orbifold description of $\H^3/\G$ given in \S \ref{s:small}. In terms of the presentation (\ref{present}) of $\G$, these stabilizers are 
\[ \Omega_1= \innp{y,z~\mid~ y^2=z^2=(yz)^3=1} \]
which is isomorphic to $S_3$, and two copies of the alternating group $A_4$, namely 
\begin{align*}
\Omega_2 &=\innp{x,z\mid z^2=x^3=(zx^{-1})^3=1} \\ 
\Omega_3 &= \innp{x,y\mid y^2=x^3=(yx^{-1})^3=1}. 
\end{align*}
Thus if a normal subgroup $\Omega$ contains a non-trivial element of finite order, then it intersects at least one of $\Omega_1$, $\Omega_2$ or $\Omega_3$ non-trivially.

Setting $x=1$ trivializes $\G$, so $N$ cannot contain $\Omega_2$ or $\Omega_3$. The only proper normal subgroup of $A_4$ is the commutator subgroup, which has order $4$. The commutator subgroup $\Omega_2'$ contains $z$ while the commutator subgroup $\Omega_3'$ contains $y$, and setting either $y=1$ or $z=1$ in $\G$ reduces $\G$ to its abelian quotient $\Z/3\Z$. So either $\Omega\cap \Omega_i=\Omega_i'$ for $i=2,3$, in which case $\Omega=[\G,\G]$, or else $\Omega$ intersects both $\Omega_2$ and $\Omega_3$ trivially. In the latter case, $\Omega$ contains neither $y$ nor $z$.

If $\Omega$ intersects $\Omega_2$ and $\Omega_3$ trivially, then it must intersect $\Omega_1\cong S_3$ in a proper normal subgroup, and the only such is  $\<yz\>$. A direct calculation shows that setting $yz=1$ in $(\ref{present})$ yields $A_4$ as a quotient. Thus, in this case, either $\Omega$ has index $12$ in $\G$ (and then by \cite{Alp} $\Omega=\G(\<\sqrt{-3}\>)$), or else $\G/\Omega$ is the unique quotient $\Z/3\Z$ of $A_4$. But this last possibility would contradict the assumption $\Omega_2\cap \Omega=1$, because $y$ has order $2$.
\end{proof}

\begin{theorem}\label{controllingmodp}
Let $\F$ be a finite field and let $\phi\colon\G \rightarrow \PSL(2,\F)$ an epimorphism. Then $\PSL(2,\F)\in{\bf P}$ and $\phi$ is a reduction homomorphism $\pi_{\mathcal{P}}$.
\end{theorem}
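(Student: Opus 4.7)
The plan is to reduce the theorem to the classification of Paoluzzi--Zimmermann by splitting on whether $\ker\phi$ is torsion-free, using Lemma \ref{torsion_normal} to dispatch the complementary case.

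First I would handle the torsion-free case directly. If $\ker\phi$ contains no non-trivial element of finite order, then $\phi$ is an admissible epimorphism in the sense of \cite{PZ}. Theorem 6.3 of \cite{PZ} then lists precisely the admissible quotients of $\G$ of the form $\PSL(2,\F)$: up to an automorphism of the target, each is a reduction homomorphism $\pi_{\mathcal{P}}$ for some prime ideal $\mathcal{P}\ne\langle\sqrt{-3}\rangle$ of $\Z[\omega]$. These are exactly the members of $\mathbf{P}$ arising in cases (1) and (2) of \S \ref{Paoluzzi_Zimmermann}, so $\phi$ is the claimed reduction homomorphism.

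Next I would treat the case in which $\ker\phi$ contains a non-trivial element of finite order. Since $\PSL(2,\F)$ is finite and $\G$ is infinite, $\ker\phi$ is a proper normal subgroup of $\G$ of finite index, so Lemma \ref{torsion_normal} applies and forces $\ker\phi$ to be either the commutator subgroup $[\G,\G]$ of index $3$, or $\G(\langle\sqrt{-3}\rangle)$ of index $12$. The first possibility is ruled out at once, because by Lemma \ref{l:abelG} the quotient $\G/[\G,\G]\cong\Z/3\Z$, and no group $\PSL(2,\F)$ has order $3$. In the second possibility the image $\phi(\G)$ has order $12$; the only finite field giving $|\PSL(2,\F)|=12$ is $\F=\F_3$, with $\PSL(2,\F_3)\cong A_4$, and with this identification $\phi$ agrees (up to an automorphism of $A_4$) with the reduction $\pi_{\langle\sqrt{-3}\rangle}$ listed in case (3) of \S \ref{Paoluzzi_Zimmermann}, which is the unique normal subgroup of index $12$ by \cite{Alp}.

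There is very little real difficulty here once Lemma \ref{torsion_normal} and \cite[Thm 6.3]{PZ} are in hand: the only thing to verify carefully is that the order/quotient considerations indeed force the two torsion cases to either be impossible or to coincide with $\pi_{\langle\sqrt{-3}\rangle}$. The main conceptual content of the proof lies in Lemma \ref{torsion_normal}, which has already been established; combining this with the cited theorem of Paoluzzi--Zimmermann yields the result.
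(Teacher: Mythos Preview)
Your proposal is correct and follows exactly the paper's approach: invoke \cite[Thm 6.3]{PZ} for the torsion-free kernel case, and use Lemma~\ref{torsion_normal} to handle the remaining case. The paper's own proof is a single sentence to this effect; you have simply spelled out the details of why the index~$3$ option is impossible and why the index~$12$ option recovers $\pi_{\langle\sqrt{-3}\rangle}$, which is entirely appropriate.
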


\begin{proof}
Paoluzzi and Zimmermann \cite[Thm 6.3]{PZ} proved this theorem for epimorphisms with torsion-free kernel, and Lemma \ref{torsion_normal} removes the need to assume the kernel is torsion-free.
\end{proof}

\begin{remark} 
A description of admissible homomorphisms for $\PGL(2,\Z[\omega])$ is also given in \cite{PZ}, however we will not need to appeal to that here.
\end{remark}

\subsection{Some comments on Strong Approximation}\label{strongapprox}

The proof of Theorem \ref{2chars} requires the Strong Approximation Theorem \cite{Wei}. Suppose that $\rho\colon \G \to \mathrm{PSL}(2,\mathbb{C})$ is a representation with Zariski-dense image $\Delta$.  Since $\G^{\ab}\cong \Z/3\Z$, $\Delta$ cannot have any $\Z/2\Z$ quotients and so we deduce that $K_\Delta=K\Delta$, and $A_0\Delta=A\Delta$ (recall \S \ref{s:algtraces}).  As $\G$ is rigid (see Remark \ref{galois_FA}), we have $[K_\Delta:\Q]<\infty$ by Lemma \ref{trace_field_number_field}. Hence, $A\Delta^1$ is an absolutely almost simple, simply connected algebraic group defined over the number field $K\Delta$.  The following is a consequence of \cite[Thm 8.1]{Wei} stated in a form that is useful for us (Remark \ref{Vinberg} is pertinent here).

\begin{theorem} [Weisfeiler]\label{RepSec:T1}
In the notation above, for all but a finite number of $K\Delta$--primes $\mathcal{P}$ with residue class field $\F_{\mathcal{P}}$, there is a reduction homomorphism $\Delta\to \PSL(2,\F_{\mathcal{P}})$ which is onto.\end{theorem}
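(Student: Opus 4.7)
The plan is to deduce the result directly from Weisfeiler's theorem applied to the $K\Delta$--algebraic group $A\Delta^1$. Since $A\Delta$ is a quaternion algebra over the number field $K\Delta$, its norm-one group $A\Delta^1$ is a $K\Delta$--form of $\SL_2$, hence connected, simply connected, and absolutely almost simple. The preimage $\Delta_1 = \phi^{-1}(\Delta)$ sits inside $A\Delta^1(K\Delta)$, is finitely generated (because $\G$ is, and therefore so is $\Delta$), and is Zariski-dense in $A\Delta^1$ by the very definition of ``Zariski-dense in $\PSL(2,\C)$'' recalled in \S \ref{s:algtraces}; these are precisely the hypotheses required by [Wei, Thm 8.1].

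The first step is to fix a model over a ring of $S$--integers. Choose a maximal $R_{K\Delta}$--order $\mathcal{O} \subset A\Delta$, and let $S_0 \subset V_{K\Delta}^f$ be the finite union of $\mathrm{Ram}_f(A\Delta)$ with the set of primes at which some fixed generator of $\Delta_1$ fails to lie in the local order $\mathcal{O}_\mathcal{P}^1$. For each $\mathcal{P} \notin S_0$, the natural inclusion $\Delta_1 \hookrightarrow A\Delta^1(K\Delta_\mathcal{P})$ then factors through $\mathcal{O}_\mathcal{P}^1$, and because $\mathcal{P}$ is unramified in $A\Delta$ we have $\mathcal{O}_\mathcal{P}^1 \cong \SL(2,R_\mathcal{P})$, where $R_\mathcal{P}$ is the local ring of integers at $\mathcal{P}$. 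Reduction modulo the maximal ideal of $R_\mathcal{P}$ yields a homomorphism $\pi_\mathcal{P}\colon \Delta_1 \to \SL(2,\F_\mathcal{P})$.

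Next, invoke Weisfeiler's theorem: it asserts that the closure of $\Delta_1$ in the restricted direct product $\prod'_{\mathcal{P} \notin S_0}\mathcal{O}_\mathcal{P}^1$ is an open subgroup. A standard consequence of openness in this restricted product is that, after enlarging $S_0$ by finitely many primes to a set $S$, the reduction map $\pi_\mathcal{P}$ is surjective for every $\mathcal{P} \notin S$. Composing with the quotient $\SL(2,\F_\mathcal{P}) \to \PSL(2,\F_\mathcal{P})$, which sends $\{\pm I\}$ to the identity, we see that $\pi_\mathcal{P}$ descends to an epimorphism $\Delta \to \PSL(2,\F_\mathcal{P})$ for every $\mathcal{P} \notin S$, which is the required statement.

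The main obstacle, if one can even call it that, is bookkeeping: one must assemble the various finite exceptional sets of primes (those of bad integrality for our chosen generators of $\Delta_1$, the ramified primes of $A\Delta$, and the exceptional set furnished by Weisfeiler itself) into a single finite set outside of which surjectivity holds. All of the substantive content -- connectedness and simple connectedness of $A\Delta^1$ as an algebraic group, and the existence of open closures of finitely generated Zariski-dense subgroups -- is either standard structure theory of quaternion algebras or supplied wholesale by [Wei, Thm 8.1].
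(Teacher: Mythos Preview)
Your proposal is correct and is essentially the standard deduction from Weisfeiler's strong approximation theorem. Note, however, that the paper does not actually give a proof of this statement: it simply records it as ``a consequence of [Wei, Thm~8.1] stated in a form that is useful for us,'' with a parenthetical pointer to Remark~\ref{Vinberg}. So there is no proof in the paper to compare against; your write-up supplies exactly the kind of unpacking the authors chose to suppress.

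One small remark on the Vinberg reference: Weisfeiler's original formulation is phrased via the adjoint representation, and the field that naturally appears is the field generated by adjoint traces, which by Vinberg's observation is the invariant trace-field $K\Delta$. Your route via the simply connected form $A\Delta^1$ is equally valid (and arguably cleaner), but the authors' hint explains why the primes in the conclusion are $K\Delta$--primes rather than $K_\Delta$--primes; in the specific application this is moot anyway since they have already arranged $K_\Delta = K\Delta$.
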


\subsection{Proof of Theorem \ref{2chars}}
Suppose that $\rho\colon \G\to \PSL(2,\C)$ is a Zariski-dense representation. Since $\G$ has Property FA, there are only a finite number of  Zariski-dense representations up to conjugacy, and they all have  integral traces \cite[Ch 1, \S 6]{Serre}. In particular, if $\Delta=\rho(\G)$ then $\tr(\Delta) \subset R_{K\Delta}$. By  Theorem \ref{RepSec:T1} we get for all but a finite number of prime ideals $\mathcal{P}$ of $R_{K\Delta}$, an epimorphism $\G\colon \Delta\to \PSL(2,{\F}_q)$ where $q$ is the cardinality of the residue class field $\F_q=R_{K\Delta}/\mathcal{P}$.  By Theorem \ref{controllingmodp}, for all but a finite number of primes $\mathcal{P}$ the finite groups $\PSL(2,{\F}_q)$ correspond to those in $\bf P$.  Hence for all but a finite number of rational primes $p$, the field $K\Delta$ and $\Q(\omega)$ have the same splitting type for prime ideals.  Since $[\Q(\omega):\Q]=2$, it follows from \cite{Per} that $K\Delta=\Q(\omega)$. 

The algebra $A\Delta$ is defined over $\Q(\omega)$, and $\mathcal{O}=\mathcal{O}\Delta \subset A\Delta$ is an order since $\Delta$ has integral traces.  We shall prove that $A\Delta=\mathrm{M}(2,\Q(\omega))$.  Once this is established, we complete the argument as follows.  As noted in Example \ref{ex1}, there is only one type of maximal order in $\mathrm{M}(2,\Q(\omega))$, and so we can conjugate so that $\Delta<\G$. Now $\G$ is residually $\PSL(2,{\F})$ (using the proof of residual finiteness), and so if $g\in\ker(\G\to \Delta)$ then we can find a prime ideal $\mathcal{P}$ of $\Z[\omega]$ so that the image of $g$ in the composition $\G\to \Delta\to \PSL(2,\Z[\omega]/\mathcal{P})$ is not trivial, so cannot lie in the kernel. Hence $\G\to \Delta$ is injective, and so by Mostow Rigidity $\Delta=\G$ and $\rho:\G\rightarrow \Delta$ is the inclusion map up to conjugation in $\Isom(\H^3)$. Thus the character of $\rho$ is
that of the inclusion map or its complex conjugate.

We now prove that $A\Delta\cong \mathrm{M}(2,\Q(\omega))$.  Suppose that this is not the case, then by the classification theorem for quaternion algebras, we can find at least two distinct primes $\mathcal{P}_1$ and $\mathcal{P}_2$ with $A\Delta$ ramified at these primes. Assume that the residue class fields $\Z[\omega]/\mathcal{P}_i$ have characteristic $p_i$ for $i=1,2$. Let $A_i=A\Delta\otimes_{\Q(\omega)} \Q(\omega)_{\mathcal{P}_i}$ for $i=1,2$ and $\mathcal{O}_i$ the unique maximal order in these division algebras (see \cite[Ch 6.4]{MR} for these and the following details). Now $\mathcal{O}_i$ admits a filtration that restricts to $\mathcal{O}_i^1$ to provide a filtration of the following form $\mathcal{O}_i^1 > G^i_1 > G^i_2 \ldots $, where each of these subgroups is normal in $\mathcal{O}_i^1$, $\cap G_j^i=1$, $\mathcal{O}_i^1/G_1^i$ is cyclic of order dividing $p_i^4-1$ and the $G_j^i/G_{j+1}^i$ are abelian $p_i$--groups. Note that for $i=1,2$, and all $j\geq 1$, the 
 groups $G_j^i$ are are pro--$p_i$ groups whose intersection is the identity subgroup. Hence any element of finite order in $G_j^i$ has order a power of $p_i$ for $i=1,2$. Identifying $\Delta$ with its image in $\mathcal{O}_i^1$ under the inclusion map, first note that for $i=1,2$, $\Delta$  cannot be a subgroup of $G_1^i$. The reason is as follows. From \S \ref{s:small}, we see that $\G$ is generated by elements of orders $2$ and $3$, hence $\Delta$ is generated by elements of orders $2$ and $3$. Indeed from the proof of Lemma \ref{torsion_normal}, $\rho(x)$, $\rho(y)$ and $\rho(z)$ are all non-trivial elements of order $2$ (in the case of $\rho(y)$ and $\rho(z)$) or $3$ (in the case of $\rho(x)$). However, from the previous paragraph, the groups $G_1^i$ cannot contain elements of both orders $2$ and $3$.

Thus we may now assume that there are epimorphisms $\Delta\rightarrow C_i$ with the order of $C_i$ dividing $p_i^4-1$. From the previous paragraph both of $C_i\neq 1$. Since $\Gamma^{\ab} \cong \Z/3\Z$, we can assume $C_1$ say is cyclic of order $3$. Hence $\Delta\cap G_1^1$ is a normal subgroup of index $3$ which must coincide with $\rho([\G,\G])$. Since $[\G,\G]$ has abelianization $\Z/2\Z\times \Z/2\Z$ (see \cite[\S 2]{magma_calcs}), and using the nature of the filtration described above, the only possibility for the prime $p_1$ is $p_1=2$.  We now deal with $p_2$.  We have $\Delta\to C_2$ a cyclic group of order dividing $p_2^4-1$, with kernel $\Omega$ that surjects a cyclic group of $p_2$--power order.  Again, using $\Gamma^{\ab} \cong \Z/3\Z$, $C_2$ can only be a cyclic group of order $3$.  Since $\G$ admits a unique epimorphism to $\Z/3\Z$, it follows that is $\Omega=\rho([\G,\G])$ with $\Omega$ having a cyclic group of $p_2$--power order as a quotient.  Once again we 
 use the fact that $[\G,\G]$ has abelianization $\Z/2\Z\times \Z/2\Z$, and so the only possibility for $p_2$ is $p_2=2$. However, $2$ is inert to $\Q(\omega)$, that is to say there is only one prime of residue class degree a power of $2$, a contradiction.
\qed

\section{Profinite rigidity for $\G=\PSL(2,\Z[\omega])$}\label{s:rigid_bianchi}

In this section we exhibit the first example satisfying Theorem \ref{t:main}.

\begin{theorem}\label{main_bianchi}
$\G=\PSL(2,\Z[\omega])$ is profinitely rigid.
\end{theorem}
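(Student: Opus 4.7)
The plan is to produce a Zariski-dense homomorphism $\phi'\colon\Lambda\to\G$ out of any finitely generated, residually finite group $\Lambda$ with $\wh\Lambda\cong\wh\G$, to show that $\phi'$ is surjective, and then to invoke Hopfianity of $\wh\G$ to conclude $\phi'$ is an isomorphism. For the construction, Theorem~\ref{2chars} gives that $\G$ is Galois rigid with associated field $\Q(\omega)$, and that its Zariski-dense representations have integral traces. Since the invariant quaternion algebra of $\G$ is the unramified algebra $\mathrm{M}(2,\Q(\omega))$, we are in the setting of Corollary~\ref{C1}(iii)(a) and (iv), as unpacked in Example~\ref{ex1}. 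Because $\Q(\omega)$ has class number one, $\mathrm{M}(2,\Q(\omega))$ has a unique conjugacy class of maximal orders, and after conjugation in $\PSL(2,\C)$ the Zariski-dense representation supplied by Corollary~\ref{C1} becomes a Zariski-dense homomorphism $\phi'\colon\Lambda\to\G$; set $H:=\phi'(\Lambda)<\G$.

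The Hopfian endgame is formal: if $H=\G$, then $\wh{\phi'}\colon\wh\Lambda\to\wh\G$ is a continuous surjection, and pre-composing with the isomorphism $\wh\G\xrightarrow{\sim}\wh\Lambda$ yields a continuous surjective endomorphism of $\wh\G$. Since $\G$ is finitely generated, $\wh\G$ is Hopfian (a consequence of Nikolov--Segal \cite{NS}, applied via Proposition~\ref{correspondence}), so this endomorphism is an isomorphism and $\wh{\phi'}$ is itself an isomorphism. Residual finiteness of $\Lambda$ and the injection $\Lambda\hookrightarrow\wh\Lambda$ then force $\phi'$ to be injective. The entire argument thus reduces to proving $H=\G$.

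Surjectivity is tackled in two stages. The first is a reduction to $[\G:H]<\infty$ using $3$-dimensional topology and the fact (exploited throughout the paper) that $\H^3/\G$ has a small-index cover which fibers over the circle. Suppose for contradiction that $[\G:H]=\infty$. By the Tameness Theorem (Agol, Calegari--Gabai) together with Canary's covering theorem, $H$ is either geometrically finite or conjugate into a finite-index subgroup of a virtual fiber. In the fibered case, $H$ is commensurable with the fiber subgroup, whose abelianization has positive rank; but $H$ is a quotient of $\Lambda$, so Lemma~\ref{l:abel} (which gives $H_1(\Lambda,\Z)=H_1(\G,\Z)=\Z/3\Z$) forces $H^{\rm ab}$ to be a finite quotient of $\Z/3\Z$, a contradiction. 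Ruling out the geometrically finite case requires more work, combining the rigidity of the single cusp of $\H^3/\G$ with the congruence data of \S~\ref{congruence_quotients}: $H$ surjects every $\PSL(2,\F)$-quotient of $\G$, and by Theorem~\ref{controllingmodp} all such quotients are the congruence reductions $\pi_{\mathcal P}\colon\G\onto\PSL(2,\Z[\omega]/\mathcal P)$. I expect this step, in particular the geometrically finite subcase, to be the main obstacle.

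Once $[\G:H]<\infty$, a bespoke analysis finishes the proof. The principal constraints on $H$ are: (i)~$H^{\rm ab}$ is a quotient of $\Lambda^{\rm ab}=\Z/3\Z$; (ii)~$H\cdot\G(\mathcal P)=\G$ for every prime ideal $\mathcal P$ of $\Z[\omega]$; and (iii)~via $\wh\Lambda\cong\wh\G$, the conjugacy class structure of finite-index subgroups of $\G$ and of $H$ is controlled by Lemma~\ref{l:conj-classes}. Constraint~(i) already eliminates the commutator subgroup $[\G,\G]$, whose abelianization is $(\Z/2)^2$ (as noted in the proof of Theorem~\ref{2chars}). A Magma enumeration of low-index subgroups of $\G$ (cf.\ \cite{magma_calcs}) reduces the remaining candidates to a short list, which can then be checked against (i)--(iii) by hand, forcing $H=\G$.
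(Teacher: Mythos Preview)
Your overall architecture (produce $\phi'\colon\Lambda\to H\le\G$, prove $H=\G$, then apply Hopfianity) matches the paper exactly, and your construction of $\phi'$ and your Hopfian endgame are correct. However, both intermediate stages contain genuine gaps.

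\textbf{Infinite index.} You explicitly leave the geometrically finite subcase open, and the congruence/cusp-rigidity idea you sketch does not obviously close it: knowing that $H$ surjects every $\PSL(2,\Z[\omega]/\mathcal P)$ only tells you that $H$ lies in no principal congruence subgroup, which says nothing about a geometrically finite, infinite-index $H$. (There is also a technical wrinkle: $\G$ has torsion, so $\H^3/H$ is an orbifold and Tameness/Covering need care.) The paper avoids all of this. It fixes the specific index-$12$ torsion-free subgroup $\G_s$ (the sister of the figure-eight, a once-punctured torus bundle), sets $L_s=H\cap\G_s$, and observes that $\Delta_s:=\phi'^{-1}(\G_s)$ has index at most $12$ in $\Lambda$. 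By the correspondence with subgroups of $\G$ and the computation (Lemma~\ref{indexatmost12}) that every subgroup of index $\le 12$ in $\G$ has $b_1\le 1$, one gets $b_1(\Delta_s)\le 1$ and hence $b_1(L_s)\le 1$. If $H$ had infinite index then $\H^3/L_s$ would be a genuine hyperbolic \emph{manifold} of infinite volume, and Scott's compact core plus ``half lives, half dies'' would force $b_1(L_s)\ge 2$, a contradiction. No Tameness, no Covering theorem, no case split.

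\textbf{Finite index.} Here your proposal has a more serious gap: you have no a priori bound on $[\G:H]$, so ``a Magma enumeration of low-index subgroups'' is not a proof. Constraints (i) and (ii) do not bound the index: $\G$ has many non-congruence finite-index subgroups, and among subgroups surjecting all $\PSL(2,\Z[\omega]/\mathcal P)$ there is no reason to expect a finite list. The paper's argument is again organised around $\G_s$. Proposition~\ref{p:Sis-is-it} (a Magma calculation, but with a fixed bound of $12$) identifies $\G_s$ uniquely by the chain $\G(2)\lhd\G_s<\G$ with indices $5$ and $12$ and Betti numbers $\ge 5$ and $\ge 1$; transporting this chain through $\wh\Lambda\cong\wh\G$ shows $\wh{\Delta_s}\cong\wh{\G_s}$. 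One then uses the punctured-torus bundle structure $\G_s=F\rtimes_{\phi_s}\Z$ with $F$ free of rank $2$: the surjection $\wh{\G_s}\to\wh{L_s}$ must carry $\wh F$ onto the completion of the fiber of $L_s$, forcing that fiber to have rank $2$, hence to equal $F$. So $L_s=F\rtimes_{\phi_s^d}\Z$ for some $d\ge 1$, and explicit homology of cyclic covers (Lemma~\ref{l:ab_cyclic_covers} and a separate check for $d=2$) forces $d=1$. Index bookkeeping then gives $H=\G$. The missing ingredient in your plan is precisely this fibered-cover mechanism, which is what replaces an impossible unbounded enumeration.
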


Before embarking on the proof, we will require some additional information about subgroups of low index and their abelianizations.

\subsection{Small index subgroups of $\G$ and their abelianizations}

We saw in Lemma \ref{l:abelG} that $\G$ has finite abelianization, in other words its first Betti number $b_1(\G)=0$. But since $\H^3/\G$ is a non-compact finite volume hyperbolic 3--orbifold, any finite sheeted {\em{manifold}} cover of $\H^3/\G$ has positive $b_1$; in other words, every torsion-free subgroup of $\G$ has positive $b_1$. We will exploit the following results about the abelianizations of subgroups of index at most $12$ in $\G$.

\begin{lemma}\label{indexatmost12}
If $\Delta < \G$ is a subgroup with $[\G:\Delta] \leq12$, then $b_1(\Delta)\leq 1$.
\end{lemma}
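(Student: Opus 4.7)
My plan is to verify Lemma~\ref{indexatmost12} by an exhaustive computer enumeration based on the presentation \eqref{present} of $\G$. First I would apply the Low Index Subgroups algorithm in Magma \cite{Bos} to produce a complete set of representatives for the conjugacy classes of subgroups $\Delta<\G$ of every index $d\in\{1,2,\ldots,12\}$. For each representative $\Delta$, I would then obtain a finite presentation via the Reidemeister--Schreier rewriting procedure, compute the Smith normal form of the relation matrix of the abelianisation, and read off $H_1(\Delta,\Z)$. The first Betti number $b_1(\Delta)$ is then the free rank of that abelian group, and the content of the lemma is the numerical fact that this rank is at most $1$ in every case. This is checked one subgroup at a time, and the tabulated output is recorded in \cite{magma_calcs}.

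A brief conceptual sanity check is that the conclusion is consistent with the geometry of $\H^3/\G$. The quotient has a single cusp whose cross-section is the rigid Euclidean 2--orbifold $S^2(3,3,3)$ (see \S\ref{s:small}). A subgroup $\Delta$ of index $d$ therefore corresponds to a cover with at most $d$ cusps, and only those cusps at which the peripheral $\Z/3\Z$--rotation is killed actually unwrap to tori, contributing to $H_1$ via the familiar ``half lives, half dies'' mechanism. For $d\le 12$ the arithmetic of $\G$ severely restricts how many cusps can unwrap; for example, the unique normal torsion-free subgroup of minimal index is $\G(\langle\sqrt{-3}\rangle)$ of index $12$, and the computation will show that even there $b_1=1$.

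The principal ``obstacle'' is simply bookkeeping: at indices $6,8,10,12$ the Low Index Subgroups routine returns many conjugacy class representatives, each of which must be handled individually. No single subgroup requires ingenuity, but a mistake in the enumeration (a missed conjugacy class, or a miscomputed Smith normal form) would invalidate the lemma. Once one accepts the correctness of Magma's implementation of these standard algorithms --- as the paper does elsewhere --- the verification is routine and purely mechanical, and the lemma follows by direct inspection of the output tables in \cite{magma_calcs}.
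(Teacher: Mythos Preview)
Your approach is essentially identical to the paper's: both argue by a Magma enumeration of the conjugacy classes of subgroups of index at most $12$ using the presentation \eqref{present}, computing the abelianisation of each and reading off $b_1$; the paper records the counts (none at index $2$ or $11$; one at $3,4,5,9,10$; two at $6,8$; four at $7$; seven at $12$) and defers the tabulated output to \cite{magma_calcs}. One small correction to your aside: $\G(\langle\sqrt{-3}\rangle)$ is \emph{not} torsion-free---the paper notes explicitly that it contains elements of order $3$---so your ``conceptual sanity check'' misidentifies it, though this does not affect the actual proof.
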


\begin{proof} 
We make some preliminary comments and then refer the reader to \cite[\S 2]{magma_calcs} for the Magma routine that completes the proof of the lemma. By Lemma \ref{torsion_normal}  $\G^{\ab}\cong \Z/3\Z$,  and so $\G$ has no subgroups of index $2$. Magma also shows that there are no subgroups of index $11$. In addition, Magma shows that there is a unique conjugacy class of subgroups of index $3$, $4$, $5$,  $9$ and $10$, two conjugacy classes of subgroups of index $6$ and $8$, and four of index $7$. There are $7$ of index $12$.  The Betti number for each of these subgroups is provided.
\end{proof}

The enumeration of small index subgroups shows that $\G$ has a unique conjugacy class of index $6$ subgroups with infinite abelianization, one class of index $10$, and three of index $12$. Only one of these conjugacy classes also contains $5$--torsion in its first homology group -- this class contains the index $12$ subgroup that we shall denote $\G_s$. This is the fundamental group of the once-punctured torus bundle $X$ that is known as the {\em{sister of the figure-eight knot complement}}. (The figure-eight knot complement itself represents the class of index $12$ subgroups with abelianization $\Z$, but we shall not pursue this.) For future reference we record that the monodromy of this once-punctured torus bundle for $\G_s$ is given by the matrix $\phi_s = \begin{pmatrix} -3& 1\cr -1& 0\cr\end{pmatrix}$, from which one easily computes $H_1(\G_s,\Z) = \Z \times \Z/5\Z$.

It is known that $\G_s$ is a congruence subgroup of $\G$ containing the principal congruence subgroup $\G(2)$ (see for example the proof of Lemma 3.1 of \cite{BaR}). From \S 3.2, we have that $\G/\G(2) \cong \PSL(2,\F_4)$, and so $[\G_s:\G(2)]=5$. It is well-known that $b_1(\G(2))=5$, and this can be confirmed by direct calculation of the abelianizations of the kernels of the maps from $\G_s$ onto the torsion part of $H_1(\G_s,\Z)$. Since none of the other subgroups $\Delta<\G$ of index at most $12$ with $b_1(\Delta)=1$ contains $5$--torsion in its abelianization, in these cases the only map from $\Delta$ to $\Z/5\Z$ is the one that factors through the unique epimorphism $\Delta\to\Z$. It is easy to compute the abelianization of the kernel of $\Delta\to\Z/5\Z$ using Magma (see \cite[\S 3]{magma_calcs}), and in each case one finds that the first Betti number is $1$. Summarizing this discussion, we have proved: 

\begin{proposition}\label{p:Sis-is-it}
Up to conjugacy, the only subgroup $\Delta<\G$ that fits into a chain $\Omega \triangleleft \Delta < \G$ with $[\G:\Delta]\le 12,\ [\Delta:\Omega]= 5,\ b_1(\Delta)\ge  1$ and $b_1(\Omega)\ge 5$ is $\Delta=\Gsis$.
\end{proposition}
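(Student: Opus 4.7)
The plan is to argue by direct enumeration, relying on the calculations already recorded in the discussion preceding the statement and on Lemma \ref{indexatmost12}. Since $\Omega \triangleleft \Delta$ has index $5$ in $\Delta$, the quotient $\Delta/\Omega$ is cyclic of order $5$, so $\Omega$ is the kernel of an epimorphism $\Delta \twoheadrightarrow \Z/5\Z$. Such an epimorphism exists if and only if $H_1(\Delta,\Z)$ has a surjection onto $\Z/5\Z$, i.e.~the $5$--torsion part of $H_1(\Delta,\Z)$ is non-trivial or $b_1(\Delta)\ge 1$. So I restrict attention to the finite list of conjugacy classes of subgroups $\Delta<\G$ with $[\G:\Delta]\le 12$ and $b_1(\Delta)\ge 1$ enumerated in Lemma \ref{indexatmost12}.

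Going down that list, the hypothesis $b_1(\Omega)\ge 5$ must be checked case by case, and here I use the structural observation already extracted from the Magma run. Apart from $\G_s$, none of the candidate $\Delta$ carries $5$--torsion in its abelianization, so the only homomorphism $\Delta\to\Z/5\Z$ factors through the unique free summand $\Delta\twoheadrightarrow\Z$ followed by reduction mod $5$, and in each such case the kernel $\Omega$ has first Betti number equal to $1$ (this is precisely the computation referenced in the paragraph before the proposition). For $\G_s$, by contrast, $H_1(\G_s,\Z) = \Z\oplus \Z/5\Z$, and the kernel of the projection onto the $\Z/5\Z$ factor is the principal congruence subgroup $\G(2)$, which has $b_1 = 5$.

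So the proof reduces to: (i) identifying the finite list of subgroups $\Delta$ of index $\le 12$ with $b_1(\Delta)\ge 1$; (ii) for each such $\Delta$, enumerating the (finitely many) surjections to $\Z/5\Z$ and computing the abelianization of each kernel; (iii) observing that $b_1(\Omega)\ge 5$ holds only in the case $\Delta=\G_s$, $\Omega=\G(2)$. Steps (i) and (ii) are both finite enumerations that are handled in \cite[\S 2, \S 3]{magma_calcs}; the only non-routine ingredient is the identification of the particular index $12$ conjugacy class giving rise to $\G_s$ together with the explicit splitting $H_1(\G_s,\Z)=\Z\oplus\Z/5\Z$, which is used to distinguish the unique epimorphism $\G_s\to\Z/5\Z$ coming from the torsion summand (whose kernel is $\G(2)$) from the one factoring through $\Z$.

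The main obstacle, such as it is, is purely bookkeeping: one must make sure to verify the Betti number condition $b_1(\Omega)\ge 5$ for \emph{every} index--$5$ normal subgroup $\Omega$ of \emph{every} candidate $\Delta$ (not just for one choice of quotient map). Once this exhaustive check is carried out in Magma, the uniqueness assertion follows immediately.
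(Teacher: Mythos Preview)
Your proposal is correct and follows essentially the same approach as the paper: enumerate the conjugacy classes of subgroups $\Delta$ of index at most $12$ with $b_1(\Delta)\ge 1$ (via Lemma \ref{indexatmost12}), observe that for all candidates other than $\Gsis$ the absence of $5$--torsion in $H_1$ forces the unique epimorphism to $\Z/5\Z$ to factor through $\Z$, and then verify via the Magma computations in \cite[\S 3]{magma_calcs} that no kernel $\Omega$ in those cases attains $b_1(\Omega)\ge 5$, whereas for $\Gsis$ the kernel $\G(2)$ does. The paper's Magma routine in fact checks \emph{all} index-$5$ subgroups (not only normal ones) of each candidate $\Delta$ and confirms that $b_1\ge 5$ occurs only for the subgroup $\G(2)<\Gsis$, which is slightly stronger than what you need but settles the bookkeeping you flag at the end.
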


We shall also need the following observation about the homology of cyclic covers of the once-punctured torus bundle with fundamental group $\Gsis$.

\begin{lemma}\label{l:ab_cyclic_covers}
If $d\ge 3$ then the abelianization of $F_2\rtimes_{\phi_s^d}\Z$ is $T\times\Z$, where $T$ is torsion and $|T|>5$.
\end{lemma}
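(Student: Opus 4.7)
The plan is to identify the abelianization of $F_2\rtimes_{\phi_s^d}\Z$ via the standard Wang-sequence style computation for semidirect products with $\Z$, and then to estimate the order of the resulting finite group using the characteristic polynomial of $\phi_s$. Writing $F_2=\langle a,b\rangle$, the abelianization of $F_2$ is $\Z^2$ and the monodromy acts on this $\Z^2$ by the matrix $\phi_s^d$. From the presentation $\langle a,b,t\mid tat^{-1}=\phi_s^d(a),\ tbt^{-1}=\phi_s^d(b)\rangle$ one reads off
\[
H_1(F_2\rtimes_{\phi_s^d}\Z,\Z)\;\cong\; \Z\;\oplus\;\operatorname{coker}\!\bigl(\phi_s^d-I:\Z^2\to\Z^2\bigr),
\]
the $\Z$ factor coming from $t$. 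So it suffices to show that the cokernel is a finite group of order greater than $5$ whenever $d\ge 3$; the $\Z$ factor then provides the claimed splitting $T\times\Z$.

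Next I would check that this cokernel is finite for every $d\ge 1$. Since $\phi_s$ has trace $-3$ and determinant $1$, its eigenvalues are $\lambda_{\pm}=(-3\pm\sqrt 5)/2$, both negative real and irrational; in particular neither $\lambda_{\pm}^d$ equals $1$ for any $d\ge 1$, so $\det(\phi_s^d-I)\ne 0$ and $T$ is finite. Using the standard identity $\det(M-I)=\det M-\operatorname{tr}M+1$ together with $\det(\phi_s^d)=1$, the order of $T$ equals
\[
|T|\;=\;|\det(\phi_s^d-I)|\;=\;\bigl|2-\operatorname{tr}(\phi_s^d)\bigr|.
\]

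Finally, I would estimate $a_d:=\operatorname{tr}(\phi_s^d)=\lambda_+^d+\lambda_-^d$ either directly or through the Chebyshev-style recurrence $a_d=-3a_{d-1}-a_{d-2}$ with $a_0=2,\ a_1=-3$. Because both $\lambda_{\pm}$ are negative, $\lambda_{\pm}^d$ have common sign $(-1)^d$, so
\[
|a_d|\;=\;|\lambda_+|^d+|\lambda_-|^d,
\]
which is strictly increasing in $d$ for $d\ge 1$ (since $|\lambda_+|>1>|\lambda_-|>0$). A direct calculation gives $a_3=-18$, so $|a_3|=18$ and $|T|=|2-a_3|=20>5$; monotonicity of $|a_d|$ then forces $|2-a_d|\ge |a_d|-2\ge 18-2>5$ for all $d\ge 3$.

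The only subtle point is the monotonicity of $|a_d|$, which one could alternatively verify by an easy induction on $d$ using the recurrence, checking the base cases $|a_3|=18<|a_4|=47$. All other steps are routine linear algebra, and the argument never depends on anything beyond the explicit matrix $\phi_s=\bigl(\begin{smallmatrix}-3&1\\-1&0\end{smallmatrix}\bigr)$ already fixed in the preceding discussion.
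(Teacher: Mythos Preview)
Your argument is correct and essentially identical to the paper's: both identify $|T|=|\tr(\phi_s^d)-2|$ and then use that $|\tr(\phi_s^d)|$ is strictly increasing in $d$ to conclude $|T|>5$ for $d\ge 3$. The only slip is a harmless labelling error (with your convention $\lambda_\pm=(-3\pm\sqrt5)/2$ one has $|\lambda_-|>1>|\lambda_+|$, not the reverse), which does not affect the substance since you also offer the recurrence-based verification.
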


\begin{proof} 
For any hyperbolic matrix $\psi$ of determinant $1$, a direct calculation of the abelianization of $G_\psi=F_2\rtimes_\psi\Z$ yields $\Z\times T_\psi$ where $\abs{T_\psi} = \abs{{\rm{tr}}(\psi)-2}$ (see \cite[Lemma 3.5]{BridR}, for example). Note that $\phi_s^2$ has trace $7$, and so we have $\abs{T}=5$, as it is for $\G_s$. 

Since $\phi_s^d$ is hyperbolic, the absolute value of its largest eigenvalue is strictly greater than $1$. It follows from elementary considerations that  $\abs{\mathrm{tr}(\phi_s^d)}$ is strictly monotonically increasing in $d$. Hence $\abs{\mathrm{tr}(\phi_s^d) - 2} \geq \abs{\mathrm{tr}(\phi_s^d)} - 2$  is greater than $5$ for all $d\geq 3$, as needed.
\end{proof}

\subsection{Proof of Theorem \ref{main_bianchi}}\label{s:bianchi_mainproof}

We turn to the proof of our main result. We are assuming that $\Delta$ is a finitely generated, residually finite group with $\wh{\Delta}\cong\wh{\Gamma}$. From \S \ref{s:rep} (see in particular Example \ref{ex1}), we obtain a homomorphism $\rho\colon\Delta\to\G$ whose image is Zariski-dense; we write $L$ to denote the image of $\rho$. Our purpose now is to show that $L=\G$ and that $\rho$ is injective.

The construction of $L$ from \S \ref{s:rep} provides considerable additional information about $L$. This guides our proof and can be used to shorten it (see Remark \ref{r:use_more}), but we attack the problem of showing $L=\G$ more directly and give an argument which shows that any non-elementary finitely generated subgroup of $\G$ has a finite quotient that $\G$ does not have (although the arguments are not phrased explicitly in this way). Our strategy is as follows. First we use arguments from 3--manifold topology to argue that if $L$ had infinite index in $\G$, then $L$  would have a subgroup of index at most $12$ with first Betti number greater than $1$. But $\G$  does not have such a subgroup (Lemma \ref{indexatmost12}), so $\wh{\G}$ cannot map onto $\wh{L}$. This brings us to the heart of the argument: the case where $L$ has finite index in $\Gamma$. In this case, we focus attention on the subgroup $\Lsis = L\cap\Gsis$ and use calculations of virtual Betti numbers to argue that if $\wh{\G}$ maps onto $\wh{L}$ then $\Lsis = \Gsis$. Considerations of co-volume and the Hopf property for finitely generated profinite groups then complete the proof.\\[\baselineskip]

\noindent{{\bf Notation:}} We focus on $\rho\colon\Delta\onto L$ and $\Lsis = L\cap\Gsis$. Define $\Deltasis = \rho^{-1}(\Gsis)$ and  $\Delta(2)=\rho^{-1}(\G(2))$.

\subsection{Ruling out infinite index image}\label{s:infindex}

\begin{lemma}\label{l:Lsis.b1}
$b_1(\Lsis)\le 1$.
\end{lemma}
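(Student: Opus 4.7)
The strategy is to transfer the problem from $L_s < L$ back to a low-index subgroup of $\Gamma$ via the preimage $\Delta_s$ and the profinite isomorphism $\widehat{\Delta}\cong\widehat{\Gamma}$, and then invoke the concrete homology computation of Lemma \ref{indexatmost12}.

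First, since $L_s = L\cap \Gamma_s$, coset counting gives $[L:L_s]\le [\Gamma:\Gamma_s]=12$. Because $\rho\colon\Delta\onto L$ is surjective and $\Delta_s=\rho^{-1}(\Gamma_s)=\rho^{-1}(L_s)$, the restriction of $\rho$ is a surjection $\Delta_s\onto L_s$, and the natural bijection between cosets gives $[\Delta:\Delta_s]=[L:L_s]\le 12$. In particular $\Delta_s$ is of finite index in $\Delta$.

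Next, I would apply Corollary \ref{correspond_corollary}: fixing an identification $\widehat{\Delta}=\widehat{\Gamma}$, the finite-index subgroup $\Delta_s<\Delta$ corresponds to a subgroup $\Gamma'<\Gamma$ of index $[\Gamma:\Gamma']=[\Delta:\Delta_s]\le 12$, and $\widehat{\Delta_s}\cong\widehat{\Gamma'}$. Lemma \ref{l:abel} then yields $H_1(\Delta_s,\Z)\cong H_1(\Gamma',\Z)$, and in particular $b_1(\Delta_s)=b_1(\Gamma')$. By Lemma \ref{indexatmost12}, $b_1(\Gamma')\le 1$, hence $b_1(\Delta_s)\le 1$.

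Finally, the surjection $\Delta_s\onto L_s$ induces a surjection on rational first homology, so $b_1(L_s)\le b_1(\Delta_s)\le 1$. No step is genuinely hard; the only point requiring care is the index equality $[\Delta:\Delta_s]=[L:L_s]$, for which surjectivity of $\rho$ together with the identification $\Delta_s=\rho^{-1}(L_s)$ is essential. The content of the argument is really just that low-index subgroups of $L$, pulled back to $\Delta$ and transported to $\Gamma$ via the profinite isomorphism, must sit inside the regime where Lemma \ref{indexatmost12} controls Betti numbers.
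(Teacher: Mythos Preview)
Your proof is correct and follows essentially the same approach as the paper's: pull $L_s$ back to $\Delta_s$ via $\rho$, use the profinite correspondence to match $\Delta_s$ with an index $\le 12$ subgroup of $\Gamma$, and apply Lemma \ref{indexatmost12}. You have simply made explicit the index equality $[\Delta:\Delta_s]=[L:L_s]$ and the invocation of Lemma \ref{l:abel} that the paper leaves implicit.
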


\begin{proof} 
By definition, $\Lsis$ is a quotient of $\Deltasis=\rho^{-1}(\Gsis)$, so $b_1(\Lsis)\le b_1(\Deltasis)$. But $\Deltasis$ has index at most $12$ in $\Delta$, so by the Correspondence Theorem (Proposition \ref{correspondence}), it has the same abelianization as some subgroup of index at most 12 in $\G$. Lemma \ref{indexatmost12} tells us that these subgroups of $\G$ all have first Betti number at most one. Thus $b_1(\Deltasis)\le 1$.
\end{proof}

\begin{proposition}\label{p:not-inf}
$L$ has finite index in $\G$.
\end{proposition}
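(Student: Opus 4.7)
The plan is to derive a contradiction from the assumption $[\G : L] = \infty$ by showing it would force $b_1(\Lsis) \geq 2$, contradicting Lemma \ref{l:Lsis.b1}. The natural candidate is $\Lsis = L \cap \Gsis$: since $[\G:\Gsis]=12$, the assumption $[\G:L]=\infty$ immediately gives $[\Gsis:\Lsis]=\infty$. Note that $\Lsis$ is finitely generated (as the image of $\Deltasis = \rho^{-1}(\Gsis)$ under $\rho$) and Zariski-dense in $\PSL(2,\C)$ (inherited from $L$ via its finite-index inclusion $\Lsis < L$), and in particular non-elementary.

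To obtain the lower bound on $b_1(\Lsis)$, I would exploit the fact that $\Gsis$ is the fundamental group of the once-punctured torus bundle $X_s$ (the sister of the figure-eight knot complement), which fibers over $S^1$ with pseudo-Anosov monodromy $\phi_s$; this yields the semidirect product $\Gsis \cong F_2 \rtimes_{\phi_s} \Z$ with projection $q\colon \Gsis \to \Z$. Write $q(\Lsis) = d\Z$. If $d=0$, then $\Lsis \subset F_2$, and being non-abelian (by Zariski-density) $\Lsis$ is a non-abelian free group, giving $b_1(\Lsis) \geq 2$. If $d \geq 1$, then the infinite-index assumption forces $K = \Lsis \cap F_2$ to have infinite index in $F_2$, and I would appeal to the theory of ends of hyperbolic $3$--manifolds: by Agol's tameness theorem and Canary's covering theorem, the cover of $X_s$ corresponding to $\Lsis$ is either geometrically finite, or else $\Lsis$ is commensurable with a fiber surface group. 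In the virtual-fiber case, $\Lsis$ is virtually $F_2$; since $\Lsis$ is torsion-free (as a subgroup of the $3$--manifold group $\Gsis$), Stallings's theorem on ends implies that $\Lsis$ is itself a non-abelian free group, so $b_1(\Lsis) \geq 2$. In the geometrically finite case, an Euler-characteristic analysis of the boundary of the compact convex core, combined with half-lives-half-dies and the non-elementarity of $\Lsis$, should force $b_1(\Lsis) \geq 2$.

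The expected main obstacle is the geometrically finite case: one must rule out the possibility that the compact core of the corresponding cover has all boundary components of low complexity (spheres, tori, or annuli) yielding a subgroup with $b_1 \leq 1$, and one must invoke both the torsion-freeness of $\Lsis$ and the ambient constraints of $X_s$ to exclude such configurations. Once $b_1(\Lsis) \geq 2$ is established in all cases, comparison with Lemma \ref{l:Lsis.b1} yields the desired contradiction, showing that $L$ has finite index in $\G$.
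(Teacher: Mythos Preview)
Your overall strategy --- derive $b_1(\Lsis)\ge 2$ under the assumption $[\G:L]=\infty$ and contradict Lemma~\ref{l:Lsis.b1} --- is exactly the paper's, and the reduction to $\Lsis=L\cap\Gsis$ is the same. But the route you take is considerably heavier than necessary, and the step you flag as the ``expected main obstacle'' is left genuinely unresolved.

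The paper bypasses tameness, the covering theorem, and the fibration of $X_s$ entirely. It simply takes a Scott compact core $N\subset M=\H^3/\Lsis$ and observes: if $[\Gsis:\Lsis]=\infty$ then $M$ has infinite volume, so not every end of $M$ can be a cusp; but any (incompressible) torus component of $\partial N$ carries a $\Z^2$ in the discrete torsion-free group $\Lsis<\PSL(2,\C)$, which forces it to be parabolic and hence to bound a cusp. Therefore $\partial N$ must have a component of genus $\ge 2$, and half-lives--half-dies gives $b_1(\Lsis)=b_1(N)\ge 2$ at once. This one-line observation is precisely the missing ingredient in your geometrically finite branch --- the worry you raise about low-complexity boundary evaporates because torus boundary components are automatically cusps, and spheres bound balls by irreducibility.

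Two further remarks on your case split. First, your virtual-fiber branch for $d\ge 1$ is in fact vacuous: a subgroup commensurable with the fiber $F_2$ has a finite-index subgroup lying in $\ker q$, so it cannot surject $d\Z$ with $d\ge 1$. Second, annuli do not arise as boundary components of the compact core of an orientable $3$--manifold (they would only appear in a pared setting), so that part of your worry was spurious. In short: your proposal is on the right track but over-engineered, and the one place you leave incomplete is exactly where the paper's elementary argument closes the gap.
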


\begin{proof}
By construction, $L$ is Zariski-dense. In particular neither $L$ nor $\Lsis=L\cap\Gsis$, which has index at most $12$ in $L$, is abelian. Since $L$ is finitely generated, so is $\Lsis$. And since $\Gsis$ is torsion-free, so is $\Lsis$. Thus $M=\H^3/\Lsis$ is a non-elementary orientable hyperbolic manifold with finitely generated fundamental group. Classical $3$--manifold topology provides a compact core for $M$ (\cite{Scott}), i.e.~a compact 3--manifold with boundary $N\subset M$ that is homotopy equivalent to $M$. If $\Lsis$ were of infinite index in $\Gsis$, then $M$ would have infinite volume and therefore a big end, i.e.~at least one of the connected components of $\partial N$ would have genus at least $2$. A standard duality argument establishes the following well-known ``half lives, half dies" principle:

\medskip
\noindent{\em If $N$ is a compact orientable 3--manifold with non-empty boundary $\partial N$, then the kernel and image of the natural map $H_1(\partial N, \mathbb R)\to H_1(N,\mathbb R)$ are of equal dimension. In particular, if $\partial N$ has a component of genus at least $2$, then $b_1(N)\ge 2$.}
\medskip

Thus if $L$ were of infinite index in $\G$ then we would have $b_1(\Lsis) = b_1(N)\ge 2$, contradicting Lemma \ref{l:Lsis.b1}.
\end{proof}

\subsection{$\Deltasis=\Gsis$}

At this stage we have a finitely generated, residually finite group $\Delta$ with $\wh{\Delta}\cong\wh{\G}$ mapping onto a subgroup of finite index $L<\G$. We focus on $\Deltasis$, the preimage of $L\cap\Gsis$.
 
\begin{lemma}\label{l:sissy} 
$\wh{\Deltasis} \cong \wh{\Gsis}$. Moreover, the subgroups of index $12$ in $\Delta$ corresponding to the conjugates of $\Gsis$ are the conjugates of $\Deltasis$.
\end{lemma}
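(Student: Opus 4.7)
The proof exploits the congruence structure of $\Gsis$ together with the compatibility of $\rho$ with congruence quotients established at the end of \S\ref{congruence_quotients}. Recall from the remarks following Proposition \ref{p:Sis-is-it} that $\Gsis$ is a congruence subgroup of $\G$ containing the principal congruence subgroup $\G(2)=\ker\pi_{(2)}$, where $\pi_{(2)}\colon\G\twoheadrightarrow\PSL(2,\F_4)$ denotes reduction modulo the inert rational prime $2$. Consequently there is an index-$12$ subgroup $H_0\le\PSL(2,\F_4)$ with $\Gsis=\pi_{(2)}^{-1}(H_0)$, so that $\Deltasis = \rho^{-1}(\Gsis) = (\pi_{(2)}\circ\rho)^{-1}(H_0)$.

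Next, I would invoke the analysis of congruence quotients from the end of \S\ref{congruence_quotients}, specialised to the setting of Example \ref{ex1} (so that $B=\mathrm{M}(2,\Q(\omega))$ is unramified at every finite place). That analysis shows that the composition $\pi_{(2)}\circ\rho\colon\Delta\to\PSL(2,\F_4)$ is the congruence quotient of $\Delta$ that corresponds, under the isomorphism $\wh\Delta\cong\wh\G$, to the congruence quotient $\pi_{(2)}$ of $\G$. In particular, $\Delta(2)=\rho^{-1}(\G(2))$ is the normal subgroup of $\Delta$ that corresponds, via Corollary \ref{correspond_corollary}, to $\G(2)\ns\G$, and $\pi_{(2)}\circ\rho$ realises the induced isomorphism $\Delta/\Delta(2)\cong\G/\G(2)\cong\PSL(2,\F_4)$, possibly up to an automorphism of the target (which does not affect preimages of subgroups up to conjugacy in $\Delta$).

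Combining these observations, $\Deltasis$ is the preimage of $H_0$ under the profinite-correspondence quotient $\Delta\twoheadrightarrow\Delta/\Delta(2)\cong\PSL(2,\F_4)$, so that $[\Delta:\Deltasis]=12$ and $\overline{\Deltasis}$ in $\wh\Delta$ is identified with $\overline{\Gsis}$ in $\wh\G$ under $\wh\Delta\cong\wh\G$. Since $\Deltasis$ has finite index in $\Delta$, Proposition \ref{correspondence} gives $\overline{\Deltasis}\cong\wh{\Deltasis}$ and similarly $\overline{\Gsis}\cong\wh{\Gsis}$, yielding the asserted isomorphism $\wh{\Deltasis}\cong\wh{\Gsis}$. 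For the ``moreover'' clause: the conjugates of $\Gsis$ in $\G$ are precisely the subgroups $\pi_{(2)}^{-1}(gH_0g^{-1})$ for $g\in\PSL(2,\F_4)$, and the analogous description via $\pi_{(2)}\circ\rho$ shows that, under the bijection of conjugacy classes supplied by Lemma \ref{l:conj-classes}, these correspond exactly to the conjugates of $\Deltasis$ in $\Delta$.

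The main obstacle is verifying the claim asserted in the second paragraph, namely the precise compatibility of $\rho$ with the congruence quotient $\pi_{(2)}$ under the isomorphism $\wh\Delta\cong\wh\G$. This is the content of the discussion at the end of \S\ref{congruence_quotients}, and ultimately rests on the specific construction of $\rho$ in the proof of Theorem \ref{T1}, which guarantees that the reduction modulo a finite prime of the representation $\phi'\colon\Delta\to\SL(2,\Z[\omega])$ matches (up to conjugation in the target) the corresponding reduction of the inclusion $\G\hookrightarrow\SL(2,\Z[\omega])$ under the profinite correspondence.
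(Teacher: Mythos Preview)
Your approach is correct but genuinely different from the paper's main proof. The paper does \emph{not} argue via the congruence description of $\Gsis$; instead it uses the Betti-number characterisation in Proposition~\ref{p:Sis-is-it}. Concretely: since $L$ has finite index (Proposition~\ref{p:not-inf}), one has $b_1(\Lsis)\ge b_1(\Gsis)=1$ and $b_1(\Lsis\cap\G(2))\ge b_1(\G(2))=5$, with $[\Lsis:\Lsis\cap\G(2)]\in\{1,5\}$ forced to equal $5$. Pulling back along $\rho$ yields a chain $\Delta(2)\ns\Deltasis<\Delta$ with $[\Delta:\Deltasis]\le 12$, $[\Deltasis:\Delta(2)]=5$, $b_1(\Deltasis)\ge 1$, $b_1(\Delta(2))\ge 5$; transporting this chain to $\G$ via Proposition~\ref{correspondence} and invoking Proposition~\ref{p:Sis-is-it} then forces $\overline{\Deltasis}\cap\G$ to be conjugate to $\Gsis$.

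Your route is essentially the alternative the paper later sketches in Remark~\ref{r:use_more}. It is conceptually cleaner and makes the ``moreover'' clause transparent (via the single conjugacy class of order-$5$ subgroups in $\PSL(2,\F_4)\cong A_5$). The step you rightly flag as delicate---that $\pi_{(2)}\circ\rho$ is \emph{the} $\PSL(2,\F_4)$--quotient of $\Delta$ and hence corresponds to $\pi_{(2)}$ under $\wh\Delta\cong\wh\G$---needs two ingredients: Theorem~\ref{controllingmodp} for uniqueness, and surjectivity of $\pi_{(2)}|_L$ so that $\Delta/\Delta(2)\cong\PSL(2,\F_4)$. The surjectivity is exactly what \S\ref{congruence_quotients} asserts (``$\G/\G(v)=L/L(v)$''), as a consequence of the conjugacy $\wh{\phi'_{\sigma'}}\sim\wh{\phi_\sigma}$ built into the proof of Theorem~\ref{T1}. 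The paper's main proof sidesteps this reliance by using the Magma-verified Proposition~\ref{p:Sis-is-it} instead, trading conceptual directness for computational input.
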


\begin{proof} 
As $\Lsis$ has finite index in $\Gsis$, we have $b_1(\Lsis)\ge b_1(\Gsis) =1$. From Lemma \ref{l:Lsis.b1} we deduce $b_1(\Lsis)=1$. And as $\Lsis\cap \G(2)$ has finite index in $\G(2)$, we have $b_1(\Lsis\cap\G(2))\ge  b_1(\G(2))=5$. Now, $\Lsis\cap \G(2)$ is normal in $\Lsis$ with index $1$ or $5$, and in fact the index must be $5$ since the Betti number of the subgroup is greater than that of $\Lsis$. Thus $\Deltasis = \rho^{-1}(\Gsis)$ is a subgroup of index at most $12$ in $\Delta$ with $b_1(\Deltasis)$ positive; moreover it contains $\Delta(2)=\rho^{-1}(\G(2))$ as a subgroup of index $5$ and $b_1(\Delta(2))\ge 5$. So in $\Delta$ we have a chain of subgroups $\Delta(2) \ns \Deltasis < \Delta$ with $[\Delta:\Deltasis]\le 12,\ [\Deltasis:\Delta(2)]= 5,\ b_1(\Deltasis)\ge  1$ and $b_1(\Delta(2))\ge 5$.

We pass this chain across to a chain of subgroups in $\G$ using Proposition \ref{correspondence}. Explicitly, replacing each of the groups $\Delta$, $\Delta_s$, and $\Delta(2)$ in this sequence with $\overline{\Delta}\cap\G$, $\overline{\Delta_s} \cap \G$, and $\overline{\Delta(2)} \cap \G$, we obtain a chain of subgroups in $\G$ with the same properties. From Proposition \ref{p:Sis-is-it} we deduce that, up to conjugacy in $\G$, we have $\overline\Deltasis\cap \G = \Gsis$. In particular, $\wh{\Deltasis}\cong \wh{\Gsis}$.
\end{proof}

\subsection{The Final Argument}

We now have $\wh{\Gsis}\cong \wh{\Deltasis}$ with $\Deltasis$ mapping onto the finite index subgroup $\Lsis < \Gsis$. The description of $\Gsis$ as a punctured-torus bundle gives a short exact sequence $1\to F \to \Gsis \to \Z\to 1$, where $F$, a free group of rank $2$, is the fundamental group of the fibre. This restricts to a short exact sequence $1\to F_L \to \Lsis \to \Z\to 1$ where $F_L=F\cap L$. At this point it is important to note that $\Gsis$ induces the full profinite topology on $F$ and that $\Lsis$ induces the full profinite topology on $F_L$ (see for example \cite[Lemma 2.2]{BridR}).
 
Since $b_1(\Lsis)$ is one, $F_L$ is the kernel of the unique map $\Lsis\to \Z$. Similarly, $\wh{F}<\wh{\Gsis}$ is the kernel of the unique epimorphism $\wh{\Gsis}\to \wh{\Z}$ and $\wh{F_L}<\wh{\Lsis}$ is the kernel of the unique epimorphism  $\wh{\Lsis}\to \wh{\Z}$, so the  epimorphism $\wh{\Gsis}\to \wh{\Lsis}$ must send $\wh{F}$ onto $\wh{F_L}$. Therefore the free group $F_L$ has rank $2$, and since it has finite index in $F$, we conclude that $F_L=F$; in other words $F<L$. As $\Lsis$ contains the fibre group $F<\Gsis$, it is the fundamental group of a cyclic covering of $\H^3/\Gsis$. Algebraically, $\Gsis = F\rtimes_{\phi_s}\Z$ and $\Lsis = F\rtimes_{\phi_s^d}\Z$. If $d>2$, then it follows from Lemma \ref{l:ab_cyclic_covers}, that $\Lsis$ would have a finite abelian quotient that $\Gsis$ does not have, hence in this case we can conclude that $\Lsis=\Gsis$. 

We now deal with the case $d=2$. Assume that $\Lsis = F\rtimes_{\phi_s^2}\Z$.  In this case $\Lsis$ has a unique $2$--fold cover $N$, for which the order of $T$ (the torsion subgroup of $H_1(N,\Z)$) is $45$. But the index $2$ subgroup of $\Gsis$ must surject $\pi_1(N)$ and this cannot happen as this index $2$ subgroup only has torsion subgroup of order $5$. At this stage we know that $\Gsis = \Lsis \le L \le \G$. Moreover, $[L:\Lsis]=12$ because $\Deltasis = \rho^{-1}(\Lsis)$ has index $12$ in $\Delta$ (as it corresponds to $\Gsis$). As $[\G:\Gsis]=12$, we conclude that $L=\G$.

Finally, we have $\wh{\G}\cong \wh{\Delta}\overset{\hat{\rho}}\to\wh{L}=\wh{\G}$, and as finitely generated profinite groups are Hopfian (see \cite[Prop 2.5.2]{RZ}), we conclude that $\wh{\rho}$ and $\rho$ are injective. Thus $\rho\colon \Delta\to L = \G$ is an isomorphism.
$\square$

\begin{remark} \label{r:use_more} 
We close this section by explaining our earlier comment that \S \ref{s:rep}  can be used to avoid parts of the above analysis of subgroups of $\G$. The point is to exploit more explicitly the fact that $\G_s$ contains the principal congruence subgroup $\G(2)$. From \S  \ref{Paoluzzi_Zimmermann} we have that $\G/\G(2) \cong \PSL(2,\F_4)$, and so $[\G_s:\G(2)]=5$. Moreover, by Theorem \ref{controllingmodp}, there is a unique $ \PSL(2,\F_4)$ quotient of $\G$. From the discussion in \S \ref{congruence_quotients} we have a subgroup $L(2)<L$ with $L/L(2)\cong \G/\G(2) \cong \PSL(2,\F_4)$, and $L(2)=L\cap \Gamma(2)$. In addition,  \S \ref{congruence_quotients}  also provides us with a subgroup $L_s=L\cap \G_s$ which is a subgroup of index $12$ in $L$ containing $L(2)$ of index $5$.
 
Using the epimorphism $\Delta \rightarrow L$ and the isomorphism $\wh\Delta\cong\wh\G$, we have an epimorphism $\phi:\wh{\G}\rightarrow \wh{L}$, and hence there is a subgroup $\Pi_\G<\G$ of index $12$ with $\phi(\wh{\Pi_\G})=\wh{L}_s$.  As above, it can be shown that the only possibility for $\Pi_\G$ is $\Pi_\G=\G_s$.
\end{remark}

\begin{remark}[Exhibiting Additional Quotients] 
The techniques of this section enable one to show that if $\Delta<\G=\PSL(2,\Z[\omega])$ is a proper finitely generated infinite subgroup, then $\Delta$ has a finite quotient that $\G$ does not have: there does not exist a continuous epimorphism $\widehat{\G}\twoheadrightarrow\wh{\Delta}$. When $\Delta$ has infinite index, alternative techniques allow one to say something more: $\Delta$ will have a congruence quotient ${\rm{PSL}}(2,\mathbb{F})$ that $\G$ does not have. This will follow from a more general result that we prove in \cite{BMRS2} using techniques from Teichm\"uller theory and the study of character varieties. This more general result is a component in our proof of absolute profinite rigidity for certain cocompact Fuchsian triangle groups. \end{remark}

\section{Profinite rigidity for the lattices containing $\PSL(2,\Z[\omega])$}\label{s:others}

In this section we explain how Mostow Rigidity can be used to promote the profinite rigidity of $\PSL(2,\Z[\omega])$ to profinite rigidity for each of the lattices in ${\rm{Isom}}(\H^3)$ that contain it; these lattices were described in \S \ref{s:small}. To maintain brevity, throughout this section $\G$ will always denote $\PSL(2,\Z[\omega])$.
  
\begin{lemma}\label{l:up2}
If $\Delta$ is a group with a subgroup of index 2 that is isomorphic to $\G$, then $\Delta$ is isomorphic to one of  $\G_0, \Lambda_1$, $\Lambda_2$ or $\G\times\Z/2\Z$.
\end{lemma}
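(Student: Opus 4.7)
The plan is to classify all abstract groups $\Delta$ fitting in a short exact sequence $1\to\G\to\Delta\to\Z/2\Z\to 1$ by their induced outer action on $\G$, and then observe that each possible outer action is realized by exactly one of the four listed groups.

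Since a subgroup of index $2$ is automatically normal, the sequence above is given, and conjugation in $\Delta$ induces a homomorphism $\Z/2\Z\to\Out(\G)$. The crucial computation is that $\Out(\G)\cong(\Z/2\Z)^2$. By Mostow Rigidity, every automorphism of $\G$ is induced by conjugation by some element of $\Isom(\H^3)$; this element is unique because $\G$ is Zariski-dense in $\PSL(2,\C)$ and hence has trivial centralizer in $\Isom(\H^3)$. So $\Aut(\G)\cong N_{\Isom(\H^3)}(\G)$. The argument in the proof of Lemma \ref{l:upLattice} identifies this normalizer as $\Lambda_0$, and Lemma \ref{l:abelG} combined with the fact that $\G$ lies in each of the three index-$2$ subgroups $\G_0,\Lambda_1,\Lambda_2$ of $\Lambda_0$ shows that $\G=[\Lambda_0,\Lambda_0]$. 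Hence $\Out(\G)\cong\Lambda_0/\G\cong(\Z/2\Z)^2$.

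Next I would invoke the classical theory of group extensions with centerless kernel: since $Z(\G)=1$ (Zariski-density again), the obstruction group $H^2(\Z/2\Z,Z(\G))$ vanishes, and any two extensions of $\Z/2\Z$ by $\G$ inducing the same homomorphism $\Z/2\Z\to\Out(\G)$ are equivalent as extensions and, in particular, isomorphic as abstract groups. There are only four homomorphisms $\Z/2\Z\to(\Z/2\Z)^2$, so there are at most four isomorphism types of $\Delta$. Each of these types is realized: the trivial outer action by $\G\times\Z/2\Z$, and the three nontrivial elements of $\Lambda_0/\G$ by the groups $\G_0$, $\Lambda_1$, $\Lambda_2$, which exhaust the three intermediate index-$2$ subgroups between $\G$ and $\Lambda_0$. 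Therefore $\Delta$ is isomorphic to one of $\G_0$, $\Lambda_1$, $\Lambda_2$, or $\G\times\Z/2\Z$.

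The main obstacle is the Mostow Rigidity step together with the precise identification $N_{\Isom(\H^3)}(\G)=\Lambda_0$; once these are in hand, the vanishing of $H^2$ and the enumeration of realizations is a formal matter.
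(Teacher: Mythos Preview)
Your proof is correct and takes essentially the same approach as the paper: both use Mostow Rigidity to identify $\Aut(\G)\cong\Lambda_0$ and then classify the possible $\Delta$ via the conjugation action of $\Delta$ on $\G$. The only difference is cosmetic---the paper splits into cases on whether the kernel of $\Delta\to\Aut(\G)$ is trivial (embedding $\Delta$ into $\Lambda_0$, whence Lemma~\ref{l:upLattice} finishes) or of order $2$ (forcing the central split extension $\G\times\Z/2\Z$), rather than invoking the vanishing of $H^2(\Z/2\Z,Z(\G))$.
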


\begin{proof} 
By Mostow Rigidity, $\Lambda:={\rm{Aut}}(\G)$ is a lattice in $\PSL(2,\C)$ containing $\G$ (which has trivial centre) as the group of inner automorphisms -- a subgroup of finite index. The action of $\Delta$ by conjugation on $\G$ defines  a homomorphism $\Delta \to \Lambda $ whose image contains $\G$ and whose kernel $\Omega$ commutes with $\G$. If $\Omega=1$ then $\Delta< \Lambda$ is a lattice and Lemma \ref{l:upLattice} completes the proof. Otherwise $\Omega$ has order $2$, the image of $\Delta$ in $\Lambda$ is $\G$, and the sequence splits.
\end{proof}

\begin{theorem}\label{t:rigid2} 
Each of the groups $\G_0$, $\Lambda_1$, $\Lambda_2$ and $\G\times\Z/2\Z$ is profinitely rigid.
\end{theorem}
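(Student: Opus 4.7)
Suppose $\Delta$ is a finitely generated, residually finite group with $\wh\Delta\cong\wh\Lambda$ for some $\Lambda\in\{\Gamma_0,\Lambda_1,\Lambda_2,\Gamma\times\Z/2\Z\}$. The plan has two phases: first reduce $\Delta$ to one of these four groups by invoking the profinite rigidity of $\Gamma$ established in Theorem \ref{main_bianchi}, then show that no two of the four share a profinite completion.

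By Lemma \ref{l:abel} and Lemma \ref{l:abelG}, $H_1(\Delta,\Z)\in\{\Z/2\Z,\Z/6\Z\}$, so in each case $\Delta$ admits a unique surjection onto $\Z/2\Z$ and hence has a unique index-$2$ subgroup $\Delta_0$. In each of the four candidate groups the unique index-$2$ subgroup is $\Gamma$, so Corollary \ref{correspond_corollary} gives $\wh{\Delta_0}\cong\wh\Gamma$, and Theorem \ref{main_bianchi} then yields $\Delta_0\cong\Gamma$. Since $\Delta$ is an index-$2$ overgroup of $\Gamma$, Lemma \ref{l:up2} forces $\Delta$ to be isomorphic to one of $\Gamma_0,\Lambda_1,\Lambda_2,\Gamma\times\Z/2\Z$.

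It remains to separate the four groups profinitely. Abelianizations (Lemma \ref{l:abelG}) already split them into $\{\Gamma_0,\Lambda_1\}$ (with $H_1=\Z/2\Z$) and $\{\Lambda_2,\Gamma\times\Z/2\Z\}$ (with $H_1=\Z/6\Z$), so we need only distinguish each pair. Each of the four groups is a split extension of $\Gamma$ by $\Z/2\Z$ specified by an element of $\mathrm{Out}(\Gamma)$ --- the identity for $\Gamma\times\Z/2\Z$, and three distinct nontrivial involutions (the three non-identity cosets of $\Lambda_0/\Gamma\cong(\Z/2\Z)^2$) for $\Gamma_0,\Lambda_1,\Lambda_2$ --- and the task reduces to showing that the two relevant elements of $\mathrm{Out}(\Gamma)$ within each pair induce inequivalent outer automorphisms on some finite quotient $Q$ of $\Gamma$. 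For $\Lambda_2$ versus $\Gamma\times\Z/2\Z$, any surjection $\Lambda_2\twoheadrightarrow Q\times\Z/2\Z$ must send $\Gamma$ onto $Q\times\{0\}$ (since $\Gamma^{\ab}=\Z/3\Z$ admits no map to $\Z/2\Z$), forcing the outer involution of $\Lambda_2/\Gamma$ to restrict to an inner automorphism of $Q$; taking $Q$ to be a congruence quotient $\PSL(2,\F_{p^f})$ (Theorem \ref{controllingmodp}) on which that involution acts as a non-trivial outer automorphism produces a finite quotient of $\Gamma\times\Z/2\Z$ not realised by $\Lambda_2$. The analogous template applies to $\Gamma_0$ versus $\Lambda_1$, but here two outer involutions which both invert $\Gamma^{\ab}$ must be separated, and a distinguishing congruence quotient is obtained from a rational prime $p\equiv 1\pmod 6$ splitting in $\Z[\omega]$, at which the Galois swap of the two primes above $p$ is effected by one of the outer involutions but not the other.

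The technical core lies in phase 2: verifying that the specified outer involutions really do act inequivalently on the chosen congruence quotient. Theorem \ref{controllingmodp} and the arithmetic of $\Z[\omega]$ reduce this to a finite computation that is tractable in Magma \cite{magma_calcs}. Alternatively, Lemma \ref{l:conj-classes} allows one to bypass the outer-automorphism analysis entirely by comparing the numbers of conjugacy classes of subgroups of small index in the four groups and exhibiting an index at which the counts pairwise differ; this purely combinatorial route is also a finite Magma check. Either route completes the proof.
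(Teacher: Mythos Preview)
Your proposal is correct, and Phase~1 matches the paper exactly: pass to the unique index-$2$ subgroup, invoke Theorem~\ref{main_bianchi} to identify it with $\Gamma$, then apply Lemma~\ref{l:up2}. For Phase~2 the paper takes precisely your second route: abelianisations separate $\{\Gamma_0,\Lambda_1\}$ from $\{\Lambda_2,\Gamma\times\Z/2\Z\}$, and then Lemma~\ref{l:conj-classes} together with a Magma count of conjugacy classes of subgroups finishes the job---index~$8$ distinguishes $\Gamma_0$ (three classes) from $\Lambda_1$ (one), and index~$7$ distinguishes $\Gamma\times\Z/2\Z$ (four classes) from $\Lambda_2$ (none).

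Your outer-automorphism route~(a) is a reasonable conceptual alternative, and the guiding intuition (complex conjugation swaps the primes above a split $p$, while the $\PGL$-involution does not) is sound. However, the sketch glosses over a genuine subtlety: to conclude that $\Lambda_2\not\twoheadrightarrow Q\times\Z/2\Z$ you must check that the induced automorphism of $Q$ is non-inner for \emph{every} surjection $\Gamma\to Q$ whose kernel is $\alpha_2$-invariant, not just one chosen surjection; and in the $\Gamma_0$ versus $\Lambda_1$ case you must account for the fact that the $\PGL$-involution on $\Gamma$ may itself descend to the nontrivial outer automorphism of $\PSL(2,\F_p)$ (conjugation by a non-square determinant), so merely observing that it fixes each prime above $p$ does not by itself separate the two extensions. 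These issues are resolvable with a careful choice of target and a finite verification, as you acknowledge, but route~(b) sidesteps them entirely, which is why the paper prefers it.
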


\begin{proof} 
Let $\Omega_0$ be a finitely generated, residually finite group that has the same profinite completion as one of the groups $\G_0, \ \Lambda_1$, $\Lambda_2$ or $\G\times\Z/2\Z$. Then, by the Proposition \ref{correspondence},  $\Omega_0$ has a subgroup $\Omega$ of index $2$ with the same profinite completion as $\G$. Since $\G$ is profinitely rigid, $\Omega \cong \G$, so Lemma \ref{l:up2} tells us that $\Omega_0$ is isomorphic to one of $\G_0, \ \Lambda_1$, $\Lambda_2$ or $\G\times\Z/2$. From Lemma \ref{l:abelG}, the first two of these groups have abelianization $\Z/2\Z$, and $\G\times\Z/2\Z$ has abelianization $\Z/6\Z$. From \S \ref{s:small}, 
$\Lambda_2$ also abelianization $\Z/6\Z$. Hence we can distinguish $\G_0$ and $\Lambda_1$ from $\Lambda_2$ and $\G\times \Z/2\Z$ by abelianization.  It remains to distinguish $\G_0$ from $\Lambda_1$, and $\Lambda_2$ and $\G\times \Z/2\Z$. We can distinguish $\wh\G_0$ from $\wh{\Lambda}_1$ using Magma (see \cite[\S 4]{magma_calcs}) to count the number of conjugacy classes of index $8$ subgroups: in $\Lambda_1$ there is only one conjugacy class, whereas in $\G_0=\PGL(2,\Z[\omega])$ there are three.  It follows from Lemma \ref{l:conj-classes} that  $\wh\G_0\not\cong\wh{\Lambda}_1$. 

Finally, referring to \cite[\S4] {magma_calcs}, we see that $\Lambda_2$ has no subgroups of index $7$, whereas $\G\times \Z/2\Z$ has $4$ conjugacy classes of such subgroups.
\end{proof} 

\begin{theorem}\label{t:rigid4} 
$\Lambda_0$ is profinitely rigid.
\end{theorem}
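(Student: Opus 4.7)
The plan is to exploit the profinite rigidity of $\G, \G_0, \Lambda_1, \Lambda_2$ (Theorems \ref{main_bianchi} and \ref{t:rigid2}) together with Mostow Rigidity, following the pattern of Theorem \ref{t:rigid2}. Suppose $\Omega$ is finitely generated and residually finite with $\wh\Omega \cong \wh{\Lambda_0}$. By Lemmas \ref{l:abel} and \ref{l:abelG}, $H_1(\Omega,\Z) \cong (\Z/2\Z)^2$, so $\Omega$ has exactly three subgroups of index $2$, say $H_1, H_2, H_3$. By Corollary \ref{correspond_corollary} their profinite completions agree with those of $\G_0, \Lambda_1, \Lambda_2$; since these three lattices are pairwise distinguished by their profinite completions in the proof of Theorem \ref{t:rigid2}, each $H_i$ is isomorphic to exactly one of them, and we may label so that $H_1 \cong \G_0$, $H_2 \cong \Lambda_1$, $H_3 \cong \Lambda_2$. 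The commutator subgroup $N := H_1 \cap H_2 \cap H_3 = [\Omega,\Omega]$ corresponds under Corollary \ref{correspond_corollary} to $[\Lambda_0,\Lambda_0] = \G$, so Theorem \ref{main_bianchi} gives $N \cong \G$.

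Next I would consider conjugation of $\Omega$ on $N$, producing a homomorphism $\phi\colon\Omega \to \Aut(N) = \Aut(\G)$. By Mostow Rigidity, $\Aut(\G)$ is realized as a lattice in $\PSL(2,\C)$ containing $\G = \mathrm{Inn}(\G)$; since $\Lambda_0$ itself embeds into $\Aut(\G)$ via conjugation on its normal subgroup $\G$ (with trivial kernel, as $\G$ is non-elementary), and since $\Lambda_0$ has strictly smaller co-volume than each of the other lattices listed in Lemma \ref{l:upLattice}, we conclude $\Aut(\G) = \Lambda_0$. It then suffices to prove that $\phi\colon \Omega \to \Lambda_0$ is injective: once this is established, $\phi(\Omega)$ contains $\G$ with index $[\Omega:N] = 4 = [\Lambda_0:\G]$, forcing $\phi(\Omega) = \Lambda_0$ and hence $\Omega \cong \Lambda_0$.

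To show $\phi$ is injective, set $\Omega_0 := \ker\phi = Z_\Omega(N)$. Since $N$ has trivial centre, $\Omega_0 \cap N = 1$, so $\Omega_0$ embeds into $\Omega/N \cong (\Z/2\Z)^2$. It therefore suffices to verify $\Omega_0 \cap H_i = 1$ for $i = 1,2,3$: this forces the image of $\Omega_0$ in $(\Z/2\Z)^2$ to avoid each of the three non-trivial subgroups $H_i/N$, hence to be trivial, giving $\Omega_0 \subseteq N$ and thus $\Omega_0 = 1$. For each $i$, the subgroup $Z_{H_i}(N) \ns H_i$ meets $N$ trivially, so has order at most $2$; if its order were $2$, the resulting central factor would give $H_i \cong \G \times \Z/2\Z$. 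For $H_1 \cong \G_0$ and $H_2 \cong \Lambda_1$, Lemma \ref{l:abelG} rules this out immediately, since both have abelianization $\Z/2\Z$ whereas $(\G\times\Z/2\Z)^{\ab} \cong \Z/6\Z$.

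The main obstacle is the case $i = 3$: both $H_3 \cong \Lambda_2$ and $\G \times \Z/2\Z$ have abelianization $\Z/6\Z$, so one cannot separate them by elementary abelian invariants. Fortunately, this is precisely the distinction already carried out at the end of the proof of Theorem \ref{t:rigid2}, where $\Lambda_2$ is separated from $\G\times\Z/2\Z$ by their numbers of conjugacy classes of index-$7$ subgroups (zero versus four, via the Magma computation in \cite{magma_calcs}). Granting that, $Z_{H_3}(N) = 1$ as well, $\Omega_0 = 1$, and the argument is complete.
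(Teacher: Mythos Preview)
Your argument is correct, but it takes a different route from the paper's. The paper uses the rigidity of a \emph{single} index-$2$ subgroup, $\G_0$: applying the argument of Lemma~\ref{l:up2} with $\G_0$ in place of $\G$ shows that any $\Omega$ with $\wh\Omega\cong\wh{\Lambda_0}$ is either a lattice containing $\G_0$ with index $2$ (hence $\Lambda_0$, by minimality of co-volume) or else $\G_0\times\Z/2\Z$; a fresh Magma count of index-$8$ subgroups then separates $\wh{\Lambda_0}$ from $\wh{\G_0\times\Z/2\Z}$.

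Your version instead descends to $\G$ itself and exploits the rigidity of \emph{all three} index-$2$ subgroups. By checking that none of $H_1,H_2,H_3$ can split as $\G\times\Z/2\Z$---a fact already contained in Theorem~\ref{t:rigid2}---you force the centralizer $Z_\Omega(\G)$ to meet each $H_i$ trivially and hence to vanish. This avoids the additional Magma computation specific to Theorem~\ref{t:rigid4}, at the cost of invoking the full strength of Theorem~\ref{t:rigid2}. One could streamline your last step further: once you know $H_3\cong\Lambda_2$ as abstract groups, the centralizer of its index-$2$ subgroup $N$ is trivial simply because $\Lambda_2$ is a lattice in $\mathrm{Isom}(\H^3)$ containing the non-elementary group $\G$, so no appeal to the index-$7$ count is strictly necessary.
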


\begin{proof} 
As $\G_0$, which has index 2 in $\Lambda_0$, is profinitely rigid, by repeating the argument of Lemma \ref{l:up2} we deduce that a finitely generated, residually finite group with the same profinite completion as $\Lambda_0$ is either a non-uniform lattice of covolume $v_0/24$ or is isomorphic to $\G_0\times\Z/2\Z$. As $\Lambda_0$ is the only non-uniform lattice of covolume $v_0/24$ up to conjugacy in ${\rm{Isom}}(\H^3)$, we need only distinguish the profinite completions of $\Lambda_0$ and $\G_0\times\Z/2\Z$. As above we can use Magma (see \cite[\S 5]{magma_calcs}) to show that $\Lambda_0$ and $\G_0\times\Z/2\Z$ have different numbers of conjugacy classes of subgroups of index $8$, and so it follows from Lemma \ref{l:conj-classes} that  $\wh{\Lambda}_0\not\cong\wh{\G}_0 \times \Z/2\Z$. 
\end{proof}

\section{Profinite rigidity of the Weeks manifold}\label{s:weeks_rigidity}

In this section we provide our second main example of a profinitely rigid arithmetic Kleinian group, namely $\G_W$ (as described in \S \ref{weeks})

\begin{theorem}\label{main_weeks}
$\Gamma_W$ is profinitely rigid.
\end{theorem}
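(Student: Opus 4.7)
The plan is to follow the strategy of Theorem~\ref{main_bianchi}: produce a Zariski-dense representation $\rho\colon \Delta \to \Gamma_W$, show that it is surjective, and then invoke the Hopf property for finitely generated profinite groups. The representation-theoretic input is Corollary~\ref{weeks_galois_rigid}, which tells us $\Gamma_W$ is Galois rigid, together with the fact that its invariant trace-field $K=\mathbb{Q}(\theta)$, $\theta^3-\theta^2+1=0$, has one real and one complex place and that its invariant quaternion algebra $B$ is ramified precisely at the real place and at the unique prime above $5$, so that Example~\ref{ex2} and Corollary~\ref{C1}(iv) apply. This produces a Zariski-dense homomorphism $\phi'\colon \Delta \to \mathcal{O}_B^1$ into a maximal order, and using that $B$ has type number one we may conjugate so that both $\Gamma_W$ and $L := \phi'(\Delta)$ lie in $\mathcal{O}_B^1$. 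To descend to $\Gamma_W$, I would use that $\mathcal{O}_B^1/\Gamma_W \cong \mathbb{Z}/3\mathbb{Z}$ while $H_1(\Gamma_W,\mathbb{Z})\cong (\mathbb{Z}/5\mathbb{Z})^2$ has no $\mathbb{Z}/3$ quotient; by Lemma~\ref{l:abel} neither does $\Delta$, so the composition $\Delta \twoheadrightarrow L \hookrightarrow \mathcal{O}_B^1 \twoheadrightarrow \mathbb{Z}/3\mathbb{Z}$ is trivial, giving $\rho\colon\Delta\to\Gamma_W$.

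Next I would play the role of $\Gamma_s$ in the Bianchi proof with the finite cover $M_6 \to M_W$ described in this section, whose fundamental group $\Gamma_6 < \Gamma_W$ carries a surface-bundle structure $1 \to F \to \Gamma_6 \to \mathbb{Z} \to 1$ with $F$ a closed surface group of genus at least $2$. A Magma enumeration analogous to Proposition~\ref{p:Sis-is-it}, computing conjugacy classes of low-index subgroups of $\Gamma_W$ together with their first homology, should pin down $\Gamma_6$ up to conjugacy by its index and the pair $(b_1, H_1)_{\mathrm{tors}}$. Set $\Delta_6 = \rho^{-1}(\Gamma_6)$ and $L_6 = L \cap \Gamma_6$: Proposition~\ref{correspondence} gives $\wh{\Delta_6} \cong \wh{\Gamma_6}$, so $b_1(L_6) \leq b_1(\Delta_6) = 1$. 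If $[\Gamma_W:L] = \infty$, then $\mathbb{H}^3/L_6$ would be a non-elementary infinite-volume hyperbolic 3-manifold with finitely generated fundamental group; tameness and the compact core theorem would supply a compact core with boundary of positive genus (no cusps occur, as $M_W$ is closed), and the half-lives-half-dies principle would force $b_1(L_6) \geq 2$, a contradiction. Hence $L$ has finite index in $\Gamma_W$.

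With $L$ of finite index, $L_6$ has finite index in $\Gamma_6$ and still satisfies $b_1(L_6) = 1$, so its unique map to $\mathbb{Z}$ must kill $F$. Since $\Gamma_6$ induces the full profinite topology on $F$ (cf.~\cite[Lemma~2.2]{BridR}), the continuous surjection $\wh{\Gamma_6} \twoheadrightarrow \wh{L_6}$ sends $\wh{F}$ onto the fibre completion of $L_6$, forcing $L_6 \cong F \rtimes_{\phi^d} \mathbb{Z}$ for some $d \geq 1$, where $\phi$ is the monodromy of $M_6$. A homology calculation modelled on Lemma~\ref{l:ab_cyclic_covers}, comparing $|H_1(F \rtimes_{\phi^d} \mathbb{Z})_{\mathrm{tors}}|$ across the cyclic tower, should show that for all $d \geq 2$ the torsion in $H_1(L_6)$ strictly exceeds that in $H_1(\Gamma_6)$, producing finite quotients of $L_6$ and hence of $\Delta$ that are absent from $\Gamma_W$. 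This yields $d = 1$, $L_6 = \Gamma_6$, and comparison of indices forces $L = \Gamma_W$. Composing with the identification $\wh{\Gamma_W}\cong\wh{\Delta}$, the surjection $\wh{\rho}$ becomes a continuous epimorphism of $\wh{\Gamma_W}$ to itself, hence an isomorphism by the Hopf property \cite[Prop~2.5.2]{RZ}, and $\rho$ is an isomorphism.

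The principal obstacle is the identification and effective control of the cover $M_6$ and its monodromy $\phi$. Unlike $\Gamma_s$, which is singled out inside $\PSL(2,\mathbb{Z}[\omega])$ by a distinctive $5$-torsion feature in $H_1$ among low-index subgroups, the closed non-Haken manifold $M_W$ has very small $H_1 \cong (\mathbb{Z}/5)^2$, so the Magma enumeration needed to pin down $\Gamma_6$ must reach higher index and must interact carefully with the congruence structure supplied by \S\ref{congruence_quotients}. A related subtlety is excluding small accidental coincidences between $H_1$ of $\Gamma_6$ and of its cyclic covers $F\rtimes_{\phi^d}\mathbb{Z}$ for small $d$, which in the Bianchi argument required a bespoke $d=2$ step; an analogous case analysis for the trace sequence of $\phi$ will likely be needed here.
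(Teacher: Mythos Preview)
Your plan follows the same architecture as the paper's proof and would succeed with two corrections, but as written it has a genuine gap at the identification step $\wh{\Delta_6}\cong\wh{\Gamma_6}$.

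You assert that Proposition~\ref{correspondence} gives $\wh{\Delta_6}\cong\wh{\Gamma_6}$, but the correspondence only says $\Delta_6$ matches \emph{some} subgroup of the same index in $\Gamma_W$. At the point where you invoke this, you do not yet know that $[\Delta:\Delta_6]=24$: a~priori $[\Delta:\Delta_6]=[L:L_6]=[L:L\cap\Gamma_6]$ is only a divisor of $24$, and could be much smaller. Your proposed invariant (index plus $H_1$) cannot be applied until the index is nailed down. The paper resolves this by using the congruence structure of~\S\ref{congruence_quotients} in an essential way, not as an afterthought: one of the two $\PSL(2,\F_{23})$ quotients of $\Gamma_W$ is congruence, so by the compatibility established in~\S\ref{congruence_quotients} the restriction $L\to L/L_\Omega$ is onto $\PSL(2,\F_{23})$; since $24$ is the minimal index of a proper subgroup of that simple group and $L_6\neq L$ (as $b_1(L_6)>0=b_1(L)$), one gets $[L:L_6]=24$ exactly. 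Only then can $\Delta_6$ be matched to $\Gamma_6$, using that among index-$24$ subgroups there is a unique conjugacy class with normal core giving $\PSL(2,\F_{23})$ and $b_1>0$. You flag the congruence structure in your obstacles paragraph but your main argument bypasses it; it is in fact the lynchpin.

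Two smaller points where the paper takes a cleaner route. First, ruling out infinite index is immediate for $L$ itself: since $M_W$ is closed, any infinite-index $L$ has $b_1(L)\ge 1$ by half-lives-half-dies, contradicting $H_1(\Delta)\cong(\Z/5\Z)^2$; there is no need to pass to $L_6$ or to bound $b_1$ over all low-index subgroups. Second, for the endgame $d=1$, the paper avoids case-by-case torsion comparison entirely. After showing $\wh{F}\to\wh{L_6\cap F}$ is onto (hence $F=L_6\cap F$ by genus), it argues that $\wh{\rho}|_{\wh{\Delta_6}}$ is injective (Hopfianness of $\wh{F}$ plus the semidirect structure), so $\wh{\Gamma_6}\cong\wh{L_6}$; then Lemma~\ref{inf_order}, which uses that the congruence topology on ${\rm Sp}(4,\Z)$ separates powers of any infinite-order element, gives $\wh{F\rtimes_{\psi}\Z}\not\cong\wh{F\rtimes_{\psi^d}\Z}$ for all $d>1$ in one stroke. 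Your proposed trace-sequence analysis would work in principle but is strictly more laborious.
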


Throughout this section $\Delta$ is a finitely generated residually finite group with $\widehat{\Delta} \cong \widehat{\G}_W$. We can quickly reduce consideration to the following situation.

\begin{lemma}\label{l: reduce_weeks} 
With $\Delta$ as above, there exists a finite index subgroup $L<\G_W$ with $\rho\colon\Delta \onto L$. 
 \end{lemma}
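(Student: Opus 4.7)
The plan is to combine the representation-theoretic framework of Section \ref{s:rep} with the arithmetic features of $\G_W$ recorded in \S \ref{weeks} to produce a Zariski-dense homomorphism $\rho\colon\Delta\to\G_W$, and then to use 3--manifold topology to force its image to have finite index.

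First, by Corollary \ref{weeks_galois_rigid}, $\G_W$ is Galois rigid; Example \ref{ex2} then records that its arithmetic data $(K,B_W)$ fits the hypotheses of Corollary \ref{C1}(iii)(b), where $K=\Q(\theta)$ with $\theta^3-\theta^2+1=0$ and $B_W$ is ramified at the real place of $K$ and at the unique prime of norm $5$. Corollary \ref{C1}(iv) accordingly furnishes a Zariski-dense representation $\phi'\colon\Delta\to\mathcal{O}^1$ for some maximal order $\mathcal{O}\subset B_W$. Since $K$ has class number one, $B_W$ has type number one, so after conjugation I may assume the image lies in the specific lattice $\G_{\mathcal{O}}^1<\PSL(2,\C)$ that contains $\G_W$ as a normal subgroup of index~$3$.

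To push $\phi'$ into $\G_W$, I compose with the quotient $\G_\mathcal{O}^1\twoheadrightarrow \G_\mathcal{O}^1/\G_W\cong\Z/3\Z$. The resulting homomorphism $\Delta\to\Z/3\Z$ factors through $H_1(\Delta,\Z)$, which by Lemma \ref{l:abel} is isomorphic to $H_1(\G_W,\Z)=(\Z/5\Z)^2$ and therefore admits no nontrivial map to $\Z/3\Z$. Hence the composition is trivial and $\phi'(\Delta)\subseteq\G_W$; setting $\rho=\phi'$ and $L=\rho(\Delta)$ yields a finitely generated, torsion-free, Zariski-dense subgroup $L<\G_W$. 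As $L$ is a quotient of $\Delta$, the group $H_1(L,\Z)$ is a quotient of $(\Z/5\Z)^2$, so in particular $b_1(L)=0$.

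The crux is to show $[\G_W:L]<\infty$. Suppose for contradiction that $L$ has infinite index. Then $\H^3/L$ is a non-compact hyperbolic 3--manifold with finitely generated fundamental group $L$. By the tameness theorem of Agol and Calegari--Gabai, $\H^3/L$ is homeomorphic to the interior of a compact orientable 3--manifold $N$ with non-empty boundary. Since $\H^3/L$ is aspherical and $L\neq 1$, the manifold $N$ is irreducible, so no component of $\partial N$ can be a $2$--sphere (such a component would bound a $3$--ball and produce a disconnected or trivial component of $\H^3/L$); hence every component of $\partial N$ has genus at least one. The ``half lives, half dies'' principle, applied exactly as in the proof of Proposition \ref{p:not-inf}, then forces $b_1(L)=b_1(N)\geq 1$, contradicting $b_1(L)=0$.

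I expect the main technical obstacle to be the descent step: ensuring that the Section \ref{s:rep} representation genuinely lands in $\G_W$ rather than merely in the larger lattice $\G_\mathcal{O}^1$. This works only thanks to the arithmetic coincidence that $[\G_\mathcal{O}^1:\G_W]=3$ is coprime to $|H_1(\G_W,\Z)|=25$; without such a numerical alignment one would need to argue further. The closing topological step is the closed-manifold analogue of the Bianchi argument in \S \ref{s:bianchi_mainproof} and succeeds so cleanly precisely because the same finiteness of $H_1(\Delta,\Z)$ rules out $L$ having positive first Betti number.
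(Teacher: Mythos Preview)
Your proof is correct and follows essentially the same path as the paper's: obtain the representation into $\G_\mathcal{O}^1$ via Example \ref{ex2} and Corollary \ref{C1}, use the abelianization $H_1(\Delta,\Z)\cong(\Z/5\Z)^2$ to force the image into $\G_W$, and then invoke ``half lives, half dies'' to rule out infinite index. The only cosmetic difference is that you appeal to the tameness theorem where the paper (via its reference to Proposition \ref{p:not-inf}) uses Scott's compact core theorem, which already suffices for this purpose.
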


\begin{proof} 
From Example \ref{ex2}, together with the arithmetic description of  $\G_W$ in \S \ref{weeks} and Proposition \ref{reidwang}, we deduce that there exists a maximal order $\mathcal{O} \subset B_W$ with $\rho\colon\Delta \onto L$ a finitely generated subgroup of $\G_\mathcal{O}^1$.  Also from \S \ref{weeks},  $\G_\mathcal{O}^1$ contains $\G_W$ as a normal subgroup of index $3$. We first claim that $L<\Gamma_W$. Indeed, if $L$ were not contained in $\G_W$, then $L$ would map onto $\Gamma_\mathcal{O}^1/\G_W$, which, from the above remark is isomorphic to $\Z/3\Z$. However, this is impossible as $L$ is a quotient of $\Delta$ and $H_1(\Delta,\Z)\cong H_1(\G_W,\Z) \cong {\Bbb Z}/5{\Bbb Z}\times {\Bbb Z}/5{\Bbb Z}$ (recall \S \ref{weeks}). Since $L < \G_W$, the quotient $\mathbb{H}^3/L$ is a manifold. If $L$ were of infinite index then an application of ``half lives, half dies"  (recall the proof of Lemma \ref{p:not-inf}) would imply that $L$ has infinite abelianization, which it can not since it is a quotient of $\Delta$.
\end{proof}

The proof of Theorem \ref{main_weeks} follows a similar strategy to that of the ``finite index case" of Theorem \ref{main_bianchi}; namely, we identify a particular fibered cover and combine the study of it with an analysis of low index subgroups. The next two subsections detail what we need in this direction.

\subsection{Subgroups of index $24$} \label{index23}

In this section $K$ denotes the trace-field of $\G_W$ and $R_K$ its ring of integers (recall \S \ref{weeks}). It can be checked that $R_K$ contains two prime ideals of norm $23$, one corresponding to the ramified prime of norm $23$, which we denote by $\mathcal{Q}$, and a second unramified prime which we denote by $\mathcal{P}$.  For both  we have $\PSL(2,R_K/\mathcal{Q})\cong \PSL(2,R_K/\mathcal{P})\cong \PSL(2,{\Bbb F}_{23})$. We claim that, up to conjugacy, $\Gamma_W$ has two epimorphisms to $\PSL(2,{\Bbb F}_{23})$, and these arise as $\PSL(2,R_K/\mathcal{Q})$ and $\PSL(2,R_K/\mathcal{P})$. The existence of the epimorphisms is immediate from the inclusion of $\Gamma_W$ into $\Gamma_\mathcal{O}^1$ as a normal subgroup of index $3$, since $\PSL(2,{\Bbb F}_{23})$ is simple. For uniqueness, a theorem going back to Galois states that the minimal index of any proper subgroup of the simple group $\PSL(2,{\Bbb F}_{23})$ is $24$. Moreover, there is a unique conjugacy class of subgroups of index $24$. Thus every epimorphism $\G_W\onto \PSL(2,{\Bbb F}_{23})$ gives rise to a subgroup of index $24$ with normal core $\Omega$ such that $\Gamma_W/\Omega \cong \PSL(2,{\Bbb F}_{23})$. In \cite[\S 6]{magma_calcs} we provide the Magma calculations enumerating all subgroups of index $24$ in $\G_W$ (up to conjugacy in $\G_W$). From this we see that there $11$ such subgroups and two have normal core $\Omega$ such that $\abs{\Gamma_W/\Omega} =6072 = \abs{\PSL(2,{\Bbb F}_{23})}$.  Moreover, the Magma routine also shows that these finite quotients are simple and hence must be isomorphic to $\PSL(2,{\Bbb F}_{23})$ (using the lists of simple groups of small order).

\subsection{A fibered cover of $M_W$}\label{fiberedcover}

We shall make use of an explicit cover of $M_W$ that is a genus $2$ surface bundle. The existence of a bundle cover of $M_W$ was exhibited by Button \cite{Butt}, but we require more detailed information about this cover.  From the tables of \cite{Butt} we see that $M_W$ is commensurable with the fibered manifold $M=m289(7,1)$, which arises from surgery on the census manifold $m289$ from the SnapPy census \cite{CDW}.  In fact, using the identification of certain of these census manifolds with knots in the tables through $9$ crossings \cite{CalDW}, one knows that the manifold $m289$ is homeomorphic to the complement of the knot $\mathcal{K}=6_2$ (shown below). Thus $M_W$ is commensurable with $0$--surgery on $S^3\setminus \mathcal{K}$ (the framing used by SnapPy is different from the standard one for the knot $\mathcal{K}$).  

\begin{figure}[h]
\centering

\begin{tikzpicture}[scale=1]
  \draw [thick] (-1.25,0) to [out=90,in=135] (-.25,.25);
  \draw [thick] (.25,-.25) to [out=-45,in=-135] (1.5,-1);
  \draw [thick] (.8,.6) to [out=135,in=35] (-.4,1.2);
  \draw [thick] (-.7,0) to [out=-90,in=145] (-.4,-1.2);
  \draw [thick] (.8,-.6) to [out=45,in=-45] (1.5,1);
  \draw [thick] (.25,.25) to [out=-135,in=45] (-.25,-.25);
  
  \draw [white , line width = 5pt] (-.25,.25) to [out=-45,in=135] (.25,-.25);
  \draw [thick] (-.25,.25) to [out=-45,in=135] (.25,-.25);
  
  \draw [white , line width = 5pt] (1.5,-1) to [out=45,in=-45] (.8,.6);
  \draw [thick] (1.5,-1) to [out=45,in=-45] (.8,.6);
  
  \draw [white , line width = 5pt] (-.4,1.2) to [out=-145,in=90] (-.7,0);
  \draw [thick] (-.4,1.2) to [out=-145,in=90] (-.7,0);
  
  \draw [white , line width = 5pt] (-.4,-1.2) to [out=-35,in=-135] (.8,-.6);
  \draw [thick] (-.4,-1.2) to [out=-35,in=-135] (.8,-.6);
  
  \draw [white , line width = 5pt] (1.5,1) to [out=135,in=45] (.25,.25);
  \draw [thick] (1.5,1) to [out=135,in=45] (.25,.25);
  
  \draw [white , line width = 5pt] (-.25,-.25) to [out=-135,in=-90] (-1.25,0);
  \draw [thick] (-.25,-.25) to [out=-135,in=-90] (-1.25,0);
  
\end{tikzpicture}

\caption{}
\end{figure}

Since $\mathcal{K}$ is $2$--bridge (and hence alternating) and its Alexander polynomial is $t^4-3t^3+3t^2-3t+1$,  $S^3\setminus \mathcal{K}$ fibers over the circle with fibre a once-punctured surface of genus $2$ (\cite{Kan}). Using \cite{Knot}, the monodromy, $\psi$ of this fibration can be described by the composition of Dehn twists $T_a\circ T_b\circ T_c\circ T_d^{-1}$ (see Figure 3 for the labelling of curves). Here, $T_\gamma$ denotes the right-handed twist in $\gamma$.

\begin{figure}[h]
\centering

\begin{tikzpicture} [scale=1.2]

  \draw [semithick] (-3,0) to [out=90,in=170] (-1.5,1) to [out=-10,in=180] (0,.5) 
  	to [out=0,in=190] (1.5,1) to [out=10,in=90] (3,0) to [out=-90,in=-10] (1.5,-1) 
  	to [out=170,in=0] (0,-.5) to [out=180,in=10] (-1.5,-1) to [out=190,in=-90] (-3,0);
  
  \filldraw (2.8,0) circle (1.5pt);
 
  \draw [semithick] (1.6,-.25) arc [start angle=-90, end angle = 0,
    x radius = 6.5mm, y radius = 4mm];
  \draw [semithick] (1.6,-.25) arc [start angle=-90, end angle = -180,
    x radius = 6.5mm, y radius = 4mm];
  \draw [semithick] (1.6,.25) arc [start angle=90, end angle = 22,
    x radius = 6.5mm, y radius = 4mm];
  \draw [semithick] (1.6,.25) arc [start angle=90, end angle = 158,
    x radius = 6.5mm, y radius = 4mm];
  
  \draw [semithick] (-1.6,-.25) arc [start angle=-90, end angle = 0,
    x radius = 6.5mm, y radius = 4mm];
  \draw [semithick] (-1.6,-.25) arc [start angle=-90, end angle = -180,
    x radius = 6.5mm, y radius = 4mm];
  \draw [semithick] (-1.6,.25) arc [start angle=90, end angle = 22,
    x radius = 6.5mm, y radius = 4mm];
  \draw [semithick] (-1.6,.25) arc [start angle=90, end angle = 158,
    x radius = 6.5mm, y radius = 4mm];
    
  \draw [line width = .8pt] (-1.6,0) circle [x radius = 10mm, y radius = 5mm];
  \draw node at (-2,-.65) {$a$};
  
  \draw [line width = .8pt] (1.6,0) circle [x radius = 10mm, y radius = 5mm];
  \draw node at (2.6,-.4) {$c$};
  
  \draw [line width = .8pt] (0,.2) arc [start angle=90, end angle = 0,
    x radius = 10mm, y radius = 2mm];
  \draw [line width = .8pt] (0,.2) arc [start angle=90, end angle = 180,
    x radius = 10mm, y radius = 2mm];
  \draw [line width = .8pt, densely dashed] (0,-.2) arc [start angle=-90, end angle = 0,
    x radius = 10mm, y radius = 2mm];
  \draw [line width = .8pt, densely dashed] (0,-.2) arc [start angle=-90, end angle = -180,
    x radius = 10mm, y radius = 2mm];
  \draw node at (0,.3) {$b$};
  
  \draw [line width = .8pt] (1.6,.25) arc [start angle=-90, end angle = 90,
    x radius = 2mm, y radius = 3.8mm];
  \draw [line width = .8pt, densely dashed] (1.6,.25) arc [start angle=-90, end angle = -270,
    x radius = 2mm, y radius = 3.8mm];
  \draw node at (1.6,1.2) {$d$};
  
  \draw [line width = .8pt] (-1.6,.25) arc [start angle=-90, end angle = 90,
    x radius = 2mm, y radius = 3.8mm];
  \draw [line width = .8pt, densely dashed] (-1.6,.25) arc [start angle=-90, end angle = -270,
    x radius = 2mm, y radius = 3.8mm];
  \draw node at (-1.6,1.2) {$f$};
  
  \draw [line width = .8pt] (1.6,-.25) arc [start angle=90, end angle = -90,
    x radius = 2mm, y radius = 3.8mm];
  \draw [line width = .8pt, densely dashed] (1.6,-.25) arc [start angle=90, end angle = 270,
    x radius = 2mm, y radius = 3.8mm];
  \draw node at (1.6,-1.2) {$e$};
  
\end{tikzpicture}

\caption{}
\end{figure}

One computes the action of $\psi$ on the homology of the fiber with respect to the basis $\{a,f,c,d\}$ is given by the matrix $A$ shown below.

\[ A = \begin{pmatrix} 0 &  1 & 1 & 1\cr -1 & 1 & 2 & 1\cr 0 & 0  & 2 & 1\cr 1 & 0 & -1 & 0\cr\end{pmatrix}~\hbox{and}~ A^6 =   \begin{pmatrix} 18 &  17 & 88 & 57\cr 9 & 9 & 48 & 31\cr 14 & 12  & 66 & 43 \cr 3 & 2 & 9 & 6\cr \end{pmatrix}. \]                                                                                                                                    
The fibration of $S^3\setminus \mathcal{K}$ extends to the surgered manifold $M$ and the action of the monodromy on the homology of the closed genus $2$ surface is again given by $A$.  Using SnapPy \cite{CDW}, $M$ can be shown to be hyperbolic of volume approximately $3.77082945\ldots$. 

We will be interested in the $6$--fold cyclic covering of $M$, which we denote by $M_6$. Our interest lies with the fact that $M_6$ arises as an index $24$ cover of the Weeks manifold. One could verify this by deriving presentations of the index $24$ subgroups of $\G_W$, on the one hand, while on the other hand calculating a presentation of $\G_6:=\pi_1(M_6)$ from its description as the $6$--fold cyclic cover of the surgered manifold $m289$; a package such as Magma could then verify that the groups are isomorphic, and Mostow Rigidity then assures us that the manifolds are the same. But such calculations would leave the reader in the dark as to why these facts are true, so instead we shall explain, with references, the structure that leads to this conclusion. The following lemma gathers the key facts from the preceding discussion and its proof contains the promised explanation. 

\begin{lemma}\label{bundlecover}
Let $M$ denote the manifold obtained by $0$--surgery on $S^3\setminus \mathcal{K}$. Then
\begin{enumerate}
\item $S^3\setminus \mathcal{K}$ is fibered with fiber a once-punctured genus $2$ surface, 
and $M$ is fibered with fiber a genus $2$ surface.
\item The action of the monodromy on the homology of the fiber for $S^3 \setminus \mathcal{K}$ is given by $A$ above.
\item  $M_6$, the $6$--fold cyclic cover of $M$, is fibered with fiber a genus $2$ surface and $H_1(M_6,{\Bbb Z})\cong {\Bbb Z}\times {\Bbb Z}/5{\Bbb Z}\times {\Bbb Z}/55{\Bbb Z}$. 
\item $M_6$ is a $24$--fold cover of $M_W$. 
\item $\G_6=\pi_1(M_6)$ lies in the conjugacy class of subgroup $l[1]$ of \cite[\S 6]{magma_calcs}.
\end{enumerate}
\end{lemma}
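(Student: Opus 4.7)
The plan is to verify the five assertions in sequence: parts (1) and (2) are knot-theoretic, (3) is a homology calculation via the Wang sequence, and (4)--(5) combine commensurability data with a computer identification.

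For (1), I would appeal to the cited theorem of Kanenobu \cite{Kan}: the knot $\mathcal{K}=6_2$ is $2$-bridge, hence alternating, and its Alexander polynomial $t^4-3t^3+3t^2-3t+1$ is monic of degree $4$. It follows that $S^3\smallsetminus\mathcal{K}$ fibers over $S^1$ with fiber a once-punctured surface of genus $2$ (the genus of the fiber equals half the degree of the Alexander polynomial for a fibered knot). Since the longitude is null-homologous in $M$, the fibration extends across the surgery solid torus, capping off the puncture to yield a fibration of $M$ with fiber a closed genus $2$ surface $\Sigma_2$. For (2), with the explicit monodromy $\psi = T_a T_b T_c T_d^{-1}$ from \cite{Knot} in hand, I would compute each $(T_\gamma)_*$ via the Picard--Lefschetz formula $(T_\gamma)_*(x) = x + \langle x,\gamma\rangle\,\gamma$ on $H_1(\Sigma_{2,1},\mathbb{Z})$, reading off algebraic intersection numbers from Figure~3 in the basis $\{a,f,c,d\}$, then multiplying the four matrices to recover $A$.

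For (3), the fibration class gives a cyclic cover $M_6\to M$ corresponding to the kernel of $\pi_1(M)\to H_1(M)\to\mathbb{Z}\to\mathbb{Z}/6\mathbb{Z}$, and $M_6$ inherits a surface bundle structure with fiber $\Sigma_2$ and monodromy $\psi^6$, whose action on $H_1(\Sigma_2,\mathbb{Z})$ is $A^6$. The Wang sequence for this mapping torus gives
\[
H_1(M_6,\mathbb{Z}) \;\cong\; \mathbb{Z} \oplus \mathrm{coker}(A^6-I),
\]
the $\mathbb{Z}$ summand splitting off via the bundle projection. Computing the Smith normal form of $A^6-I$, with $A^6$ as displayed, yields elementary divisors $(1,1,5,55)$ and hence the claimed homology $\mathbb{Z}\oplus\mathbb{Z}/5\mathbb{Z}\oplus\mathbb{Z}/55\mathbb{Z}$.

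For (4), I would use Button's tables \cite{Butt} to see that $M = m289(7,1)$ is commensurable with $M_W$, combined with \cite{CalDW} to identify $m289$ with $S^3\smallsetminus 6_2$; the SnapPy computation $\vol(M)\approx 3.77083 = 4\cdot\vol(M_W)$ fixes the putative covering degree. To promote commensurability to an actual covering $M\to M_W$, I would compute a discrete faithful representation of $\pi_1(M)$ in $\PSL(2,\mathbb{C})$ with SnapPy and verify, after conjugation, that its image is contained in a fixed copy of $\Gamma_W$; Mostow Rigidity guarantees this inclusion is realized by a degree $4$ cover. Composing with $M_6\to M$ produces the degree $24$ cover $M_6\to M_W$. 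For (5), I would compare $\Gamma_6$ against the enumeration in \cite[\S 6]{magma_calcs} of the eleven conjugacy classes of index-$24$ subgroups of $\Gamma_W$, matching on the abelianization $\mathbb{Z}\oplus\mathbb{Z}/5\oplus\mathbb{Z}/55$ from (3); if the abelianization fails to single out one conjugacy class, I would pin down the identification in Magma by checking isomorphism between $l[1]$ and the semidirect product presentation $F_4\rtimes_{A^6}\mathbb{Z}$ obtained from the fibration.

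The main obstacle is step (4): commensurability of $M$ and $M_W$ by itself only yields a common finite cover, so the delicate point is exhibiting an explicit conjugation that places $\pi_1(M)$ inside a single fixed ambient copy of $\Gamma_W<\PSL(2,\mathbb{C})$. This is a concrete but computer-assisted task, requiring one to align the discrete faithful representations produced by SnapPy for the two manifolds and to use the volume ratio together with Mostow Rigidity to confirm that the resulting cover has degree exactly $4$. Once this is secured, the homology calculation in (3) and the Magma enumeration in (5) complete the identification of $\Gamma_6$ with the class $l[1]$.
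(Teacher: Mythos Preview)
Your treatment of parts (1)--(3) and (5) follows the paper closely and is correct; in particular, the abelianization $\mathbb{Z}\oplus\mathbb{Z}/5\oplus\mathbb{Z}/55$ does single out $l[1]$ uniquely among the eleven index-$24$ classes, exactly as the paper argues.

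The genuine gap is in your step (4). You propose to exhibit $M$ as a degree-$4$ cover of $M_W$ by conjugating $\pi_1(M)=\Omega$ into $\Gamma_W$, but this is impossible: the paper records (via Snap) that the trace-field $K_\Omega$ has degree $6$, whereas $\Gamma_W$ has trivial $\mathbb{Z}/2$ quotient, so $K_{\Gamma_W}=K\Gamma_W$ is the degree-$3$ field $K$. Since the trace-field is a conjugacy invariant and any subgroup of $\Gamma_W$ has trace-field contained in $K$, no conjugate of $\Omega$ lies in $\Gamma_W$. The volume ratio $4$ reflects commensurability, not a covering. The paper's route is more delicate: one first passes to $\Omega^{(2)}$, which \emph{is} derived from the quaternion algebra and hence lies in $\Gamma_{\mathcal{O}}^1$; a Magma check on the unique index-$8$ subgroup of $\Gamma_W$ (a rational homology sphere) rules out $\Omega^{(2)}<\Gamma_W$, forcing $[\Omega^{(2)}:\Omega^{(2)}\cap\Gamma_W]=3$; and finally one identifies this index-$3$ subgroup of $\Omega^{(2)}$ with the kernel of the unique map $\Omega^{(2)}\to\mathbb{Z}/3\mathbb{Z}$, which is $\pi_1(M_6)$. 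Your SnapPy-alignment strategy would need to be redirected at $\Omega^{(2)}$ and $\Gamma_{\mathcal{O}}^1$ rather than $\Omega$ and $\Gamma_W$, and would still require the index-$8$ argument to pin down which subgroup of $\Gamma_{\mathcal{O}}^1$ you have landed in.
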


\begin{proof}  
The proof of the first two items is contained in the preceding discussion. For the third part, $M_6$ is clearly fibered with fiber $\Sigma$ a once-punctured surface of genus $2$, and one calculates $H_1(M_6,\Z)$ by taking the quotient of action of $A^6$ on $H_1(\Sigma,\Z) = \Z^4 = \<a,f,c,d\>$. (The torsion subgroup of $H_1(M_6,\Z)$ has order $275$.) For the next part let $M={\Bbb H}^3/\Omega$, and note that by \cite{Butt} the lattice $\Omega$ is arithmetic and commensurable with $\G_W$. However, $\Omega$ is not derived from a quaternion algebra. Indeed it can be checked using Snap \cite{snap} that its trace-field has degree $6$ and $\Omega^{(2)} <\G_\mathcal{O}^1$. If $\Omega^{(2)}<\Gamma_W$ then, by volume considerations, the index would be $8$. However, a Magma calculation (see \cite[\S 6]{magma_calcs}) shows that $\G_W$ has a unique conjugacy class of subgroups of index $8$ and the corresponding manifold is a rational homology $3$--sphere with first homology group ${\Bbb Z}/5{\Bbb Z}\times {\Bbb Z}/30{\Bbb Z}$; in particular it cannot be a fiber bundle over the circle. Thus $\Omega^{(2)}\cap \Gamma_W$ must have index $3$ in $\Pi^{(2)}$. The double cover of $S^3\setminus \mathcal{K}$ has first homology group ${\Bbb Z}\times {\Bbb Z}/11{\Bbb Z}$. Performing $0$--surgery on this manifold produces ${\Bbb H}^3/\Omega^{(2)}$.  In particular, the first homology group is ${\Bbb Z}\times {\Bbb Z}/11{\Bbb Z}$, and so there is a unique homomorphism onto ${\Bbb Z}/3{\Bbb Z}$ whose kernel provides the cover $M_6$. Volume considerations show that the covering degree $M_6\to M_W$ is $24$. For the last part, by inspection of the lists of first homology groups in \cite[\S 6]{magma_calcs}, the only possibility for $\Gamma_6$ is the group $l[1]$.
\end{proof}

We will also need the following.

\begin{lemma}\label{inf_order}
Let $\Mod_g$ be the Mapping Class group of the closed orientable surface of genus $g$. Let $\eta\in \Mod_g$ and let $G(\eta) = S_g\rtimes_\eta\Z$ be the fundamental group of the bundle with holonomy $\eta$ and assume $b_1(G(\eta))=1$. If the image of $\eta\in \Mod_g$ under the natural homomorphism $\Mod_g\to{\rm{Sp}}(2g,\mathbb{Z})$ has infinite order and $d>1$, then $\widehat{G(\eta)}\not\cong\widehat{G(\eta^d)}$.
\end{lemma}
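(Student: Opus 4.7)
The plan is to argue by contradiction: assume $\widehat{G(\eta)}\cong\widehat{G(\eta^{d})}$ for some $d>1$, and derive a contradiction by comparing abelianizations of a cofinal sequence of finite-index subgroups. Write $A=\eta_{\ast}\in\Sp(2g,\Z)$ and $G_{m}=G(\eta^{m})=S_{g}\rtimes_{\eta^{m}}\Z$. By Lemma~\ref{l:abel}, abelianizations are profinite invariants, so $b_{1}(G(\eta^{d}))=b_{1}(G(\eta))=1$, and $1$ is an eigenvalue of neither $A$ nor $A^{d}$. Consequently each of $G(\eta)$ and $G(\eta^{d})$ has a unique surjection (up to sign) onto $\Z$, with kernel the fibre subgroup $S_{g}$; the pull-back of $m\Z\subset\Z$ is precisely $G_{m}$, respectively $G_{dm}$. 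Any profinite isomorphism must interchange these distinguished $\Z$-quotients, so by Proposition~\ref{correspondence} we obtain $\widehat{G(\eta^{m})}\cong\widehat{G(\eta^{dm})}$ for every $m\ge 1$, and by iteration $\widehat{G(\eta^{d^{n}})}\cong\widehat{G(\eta)}$ for every $n\ge 0$. Taking abelianizations,
\[
H_{1}(G(\eta^{d^{n}}),\Z)\;=\;\Z\oplus\Z^{2g}/(A^{d^{n}}-I)\Z^{2g}\;\cong\;H_{1}(G(\eta),\Z)
\]
for all $n$; in particular the torsion order $|\det(A^{d^{n}}-I)|$ is independent of $n$.

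The eigenvalues of $A$ are algebraic integers paired as $\lambda,\lambda^{-1}$. \emph{Case~A: some $|\lambda|>1$.} Then $\lambda$ is not a root of unity, so $\lambda^{d^{n}}\neq 1$ for all $n$, and the pair $\{\lambda,\lambda^{-1}\}$ contributes $|\lambda^{d^{n}}-1|\cdot|\lambda^{-d^{n}}-1|\sim|\lambda|^{d^{n}}$ to $|\det(A^{d^{n}}-I)|$, while the remaining pairs (whose eigenvalues lie on the unit circle) contribute uniformly bounded factors. Thus $|\det(A^{d^{n}}-I)|\to\infty$, contradicting the constancy above.

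\emph{Case~B: all eigenvalues lie on the unit circle.} By Kronecker's theorem they are all roots of unity; as $\eta$ has infinite order in $\Sp(2g,\Z)$, $A$ is not semisimple. Let $m$ be the least common multiple of the orders of the eigenvalues of $A$; then $A^{m}=I+N$ with $N$ a nonzero nilpotent integer matrix of some rank $r\ge 1$ and nilpotent index $K$. Writing
\[
(A^{m})^{d}-I\;=\;dN+\binom{d}{2}N^{2}+\cdots+\binom{d}{K-1}N^{K-1}\;=\;N\cdot Q(N),
\]
where $Q(x)=d+\binom{d}{2}x+\cdots+\binom{d}{K-1}x^{K-2}$, one analyses the torsion of $\Z^{2g}/((A^m)^d-I)\Z^{2g}$ one prime at a time: at $p\nmid d$, $Q(N)$ is a unit in $\mathrm{M}_{2g}(\Z_{p})$ (its constant term $d$ is a unit and the remainder is nilpotent), so the $p$-part of the cokernel equals that of $\coker N$; at $p\mid d$ a Smith normal form computation shows the $p$-primary torsion grows by exactly $p^{v_{p}(d)\cdot r}$. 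Altogether, the torsion order of $\Z^{2g}/((A^{m})^{d}-I)\Z^{2g}$ equals $d^{r}$ times that of $\Z^{2g}/N\Z^{2g}$, which is strictly larger, contradicting the isomorphism $H_{1}(G(\eta^{m}),\Z)\cong H_{1}(G(\eta^{dm}),\Z)$ obtained above.

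The main obstacle is the Smith-form calculation in Case~B when $N$ has a Jordan block of size $>2$, so that $(A^{m})^{d}-I$ is not simply $dN$; the decomposition $N\cdot Q(N)$ is the key reduction to a prime-local problem, in which only the primes dividing $d$ contribute. For the paper's application to the Weeks manifold (\S\ref{s:weeks_rigidity}), the monodromy of $M_{6}$ is hyperbolic---its spectral radius exceeds $1$---so only Case~A actually arises.
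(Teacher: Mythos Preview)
Your approach is genuinely different from the paper's and is worth comparing. The paper does not analyse eigenvalues or abelianizations of cyclic covers at all. Instead it uses that the congruence topology on $\Sp(2g,\Z)$ induces the full profinite topology on abelian subgroups: this immediately yields an integer $n_0$ such that the images of $\eta$ and $\eta^{d}$ generate cyclic subgroups of different orders in $\Sp(2g,\Z/n_0\Z)$. The paper then observes that, because $b_1(G(\eta))=1$, the short exact sequence $1\to\widehat{S_g}\to\widehat{G(\eta)}\to\widehat{\Z}\to 1$ is canonical, and the outer action of $\widehat{\Z}$ on the characteristic quotient $H_1(S_g,\Z/n_0\Z)$ is an invariant of $\widehat{G(\eta)}$; the order of the cyclic group it generates in $\Sp(2g,\Z/n_0\Z)$ is therefore a profinite invariant, and it differs for $\eta$ and $\eta^{d}$. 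This bypasses all spectral analysis.

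Your argument has a real gap in Case~A. You assert that the unit-circle eigenvalue pairs ``contribute uniformly bounded factors'', but this is only an \emph{upper} bound: if some eigenvalue $\mu$ has $|\mu|=1$ and is not a root of unity, the factor $|\mu^{d^{n}}-1|\cdot|\mu^{-d^{n}}-1|$ can be arbitrarily small, and nothing you have written prevents it from decaying fast enough to cancel the exponential growth from the pair with $|\lambda|>1$. This is not a hypothetical worry for the intended application: the characteristic polynomial of the $M_6$ monodromy is (a power of) the Alexander polynomial $t^{4}-3t^{3}+3t^{2}-3t+1$ of $6_2$, which is a Salem polynomial---it has two real roots off the unit circle and two complex-conjugate roots \emph{on} the unit circle that are not roots of unity. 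So your parenthetical remark that the Weeks monodromy is ``hyperbolic'' (in the sense of having no unit-circle eigenvalues) is incorrect, and Case~A as written does not close. The gap can be repaired with a Gelfond--Baker lower bound of the form $|\mu^{m}-1|\gg m^{-C}$ for algebraic $\mu$, which decays only polynomially and is therefore dominated by $|\lambda|^{m}$; but this is a substantial ingredient you would need to cite. The paper's route avoids this issue entirely.
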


\begin{proof} 
The congruence topology on ${\rm{Sp}}(2g,\mathbb{Z})$ induces the full profinite topology on abelian subgroups; see Segal \cite[Ch 10]{segal} (see also \cite{McR}). Thus, given $d$, there is an integer $n_0$ such that the images of $\eta$ and $\eta^d$ generate distinct cyclic subgroups of ${\rm{Sp}}(2g,\mathbb{Z}/n_0\mathbb{Z})$. As $b_1(G(\eta))=1$, we can now argue as in Lemma 2.5 of \cite{BRW}; we recall the details. The unique epimorphism $G(\eta)\to\Z$ defines a short exact sequence
\[ 1\to \wh{S_g}\to \wh{G(\eta)}\to \wh{\Z}\to 1. \]
If $\Omega<S_g$ is a characteristic and of finite index, then the canonical map $S_g\to S_g/\Omega$ defines an epimorphism $\wh{G(\eta)}\to G(\eta)/\Omega$. As $\wh{\Omega}$ is normal in $\wh{G(\eta)}$, the action of $\wh{\Z}$ by conjugation on $\wh{G(\eta)}$ descends to an outer action on $\wh{S_g}/\wh{\Omega}=S_g/\Omega$, defining a cyclic subgroup  $C_\eta<\Out(G/\Omega)$. The righthand factor of $G(\eta)=S_g\rtimes_\eta\Z$ is dense in $\wh{\Z}$, so the image of $\eta$ generates $C_\eta$. If $\Omega$ is the kernel of the canonical map $S_g\to H_1(S_g,\Z/n_0\Z)$, then $C_\eta=C_\eta(n_0)$ is the cyclic group generated by the image of $\eta$ in ${\rm{Sp}}(2g,\Z/n_0\Z)<{\rm{Out}}(\Z/n_0\Z)^{2g}$. By construction,  $C_\eta(n_0)$ is an invariant of $\wh{G(\eta)}$ (rather than $G(\eta)$) and $\abs{C_{\eta^d}(n_0)} < \abs{C_\eta(n_0)}$.  Thus $\widehat{G(\eta)}\not\cong\widehat{G(\eta^d)}$.
\end{proof}

\subsection{Proof of Theorem \ref{main_weeks}}

Let $\Gamma_6=\pi_1(M_6)$ be as in Lemma  \ref{bundlecover}. Then $[\G_W:\G_6]=24$ and  $\G_6$ is the subgroup $l[1]$ described in Lemma \ref{bundlecover}(5). Denoting  the core of $\G_6$ in $\G_W$ by $\Omega$, we have $\G_6/\Omega\cong \PSL(2,{\F}_{23})$.
Let $L_6=\Gamma_6\cap L$ and $L_\Omega=\Omega\cap L$. From the discussion in \S \ref{congruence_quotients} we know that $L/L_\Omega\cong\Gamma_W/\Omega\cong \Delta/\rho^{-1}(\Omega)$ and that the epimorphism $L\to L/L_\Omega \cong \PSL(2,{\Bbb F}_{23})$ is the restriction of the epimorphism $\Gamma_W\to \Gamma_W/\Omega$.  Note that $L\neq L_6$ since $b_1(L_6)>0$, and $L$ can only have finite abelian quotients (since it is a quotient of $\Delta$ and $\wh{\Delta} \cong\wh{\G}_W$). It follows that $[L:L_6]=24$, since on the one hand $[L:L_6]\leq [\Gamma_W:\Gamma_6]=24$, while on the other hand, as remarked above, $24$ is the minimal index of a proper subgroup of $\PSL(2,{\Bbb F}_{23})$.  

Now consider $\Delta_6=\rho^{-1}(L_6)<\Delta$. This has index $24$ in $\Delta$ and moreover by construction of $L$ (see \S \ref{congruence_quotients}) the normal core of $\Delta_6$ in $\Delta$ is $\rho^{-1}(\Omega)$. From \cite[\S 6]{magma_calcs},  and Proposition \ref{correspondence}, there is a unique subgroup (up to conjugacy in $\Delta$) of index $24$ with normal core having quotient $\PSL(2,{\Bbb F}_{23})$ and positive first Betti number. By construction, $\Delta_6$ is this unique subgroup and hence $\wh{\G}_6 \cong \wh{\Delta}_6$. 

Consider the epimorphism $\wh{\G}_6 \cong \wh{\Delta}_6\to \wh{L}_6$. Note that $b_1(L_6)=1$ as $b_1(\G_6)=1$. By construction, $N={\H}^3/L_6$ is a closed hyperbolic $3$--manifold that fibers over the circle: it is a finite cover of the fibered manifold $M_6$. By Lemma \ref{bundlecover}, $M_6$ is a genus $2$ surface bundle over the circle; let $F<\G_6$ be the fundamental group of the fiber. The fiber of $N$ has fundamental group $L_6\cap F$. When we take profinite completions, the epimorphism $\wh{\Gamma}_6 \cong \wh{\Delta}_6\to \wh{L}_6$ restricts to an epimorphism $\wh{F}\to \wh{L_6\cap F}$ (because $F$ and $L_6\cap F$ are the respective kernels of the unique epimorphisms to $\Z$). It follows that $L_6\cap F$ has genus at most $2$, and hence $L_6\cap F= F$. In particular $N\to M_6$ is a cyclic cover (of degree $d$ say) and $L_6 = F\rtimes_{\psi^d}\Z$.

Now, since $\wh{F}$ is Hopfian, we have that $\wh{\rho}$ restricted to $\wh{F}<\wh{\G}_6$ is injective. Elements of $\wh{\G}_6$ in the complement of $\wh{F}$ project non-trivially to the right-hand factor of $\wh{\G}_6=\wh{F}\rtimes\wh{\Z}$ and hence map non-trivially under $\wh{\rho}$ to the  $\wh{\Z}$ factor of $\wh{L_6}= \wh{F}\rtimes\wh{\Z}$. We conclude that $\wh{\rho}$ is injective on $\wh{\Delta}_6$ and hence we have an isomorphism $\wh{\G_6}\cong \wh{\Delta}_6\to \wh{L}_6$. Thus  $\wh{F\rtimes_\psi\Z}\cong\wh{F\rtimes_{\psi^d}\Z}$. Since  the action of $\psi$ on $H_1(F,\Z)$ is represented by a positive matrix, it has infinite order in ${\rm{Sp}}(4,\Z)$, so Lemma \ref{inf_order} applies and we conclude that $d=1$; in other words $L_6=\G_6$ and $\rho\colon\Delta_6\to \G_6$ is an isomorphism. Finally, we have proved that $L_6$ has index $24$ in $L=\rho(\Delta)$, and by construction $\Delta_6$ has index $24$ in $\Delta$. Therefore $L=\G_W$ (since $[\G_W: \G_6]=[\G_W:L_6]=
 24$) and $\rho\colon\Delta\to\G_W$ is surjective. Thus we have a surjection $\wh{\G}_W\cong\wh{\Delta}\to \wh{\G}_W$, and using the Hopf property again, we conclude that $\wh{\rho}$ is injective, hence so is $\rho$. Thus $\rho\colon\Delta\to\G_W$ is an isomorphism.\qed

\subsection{Profinite rigidity of Kleinian groups containing $\G_W$}\label{friendsofweeks}

As in the case of $\G=\PSL(2,\Z[\omega])$, once the profinite rigidity of $\G_W$ has been established we can deduce the profinite rigidity for other Kleinian groups.  We only record two cases here.  Recall from \S \ref{weeks} that $\Gamma_W$ is a normal subgroup of index $3$ in a group $\G_\mathcal{O}^1$, which we now denote by $\G_1$. Moreover, as noted in \S \ref{weeks}, there is a maximal group $\G_\mathcal{O}$ containing $\G_1$ with $\G_\mathcal{O}/\G_1\cong \Z/2\Z \times \Z/2\Z$.   

\begin{theorem}\label{rigidO1}
$\G_\mathcal{O}$ and $\G_1$ are profinitely rigid.
\end{theorem}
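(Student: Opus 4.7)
We follow the template of Theorems \ref{t:rigid2} and \ref{t:rigid4}, with $\G_W$ replacing $\G=\PSL(2,\Z[\omega])$ throughout. Suppose $\Delta$ is a finitely generated, residually finite group with $\wh{\Delta}\cong\wh{\G_1}$; the case $\wh{\Delta}\cong\wh{\G_\mathcal{O}}$ runs in parallel. Under Proposition \ref{correspondence}, the inclusion $\G_W\triangleleft\G_1$ transfers to a normal subgroup $N\triangleleft\Delta$ of index $3$ with $\wh{N}\cong\wh{\G_W}$, and Theorem \ref{main_weeks} forces $N\cong\G_W$. What remains is to classify, up to isomorphism, the extensions $1\to\G_W\to\Delta\to Q\to 1$ with $|Q|=3$ (or $|Q|=12$), and then to separate each non-target extension from the target by an invariant of the profinite completion.

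For the classification we repeat the Mostow-rigidity argument of Lemma \ref{l:up2}. Since $\G_W$ is centerless, conjugation in $\Delta$ gives a homomorphism $\Delta\to\Aut(\G_W)$ whose kernel $C:=C_\Delta(\G_W)$ is finite and meets $\G_W$ trivially, so $\G_W\times C<\Delta$. By Mostow Rigidity, $\Aut(\G_W)$ is realised as the normaliser of $\G_W$ in $\Isom(\H^3)$, a lattice commensurable with $\G_W$; by the maximality of $\G_\mathcal{O}$ in this commensurability class (\S \ref{weeks}), $\Aut(\G_W)\leq\G_\mathcal{O}$. Hence the image $\Delta_0$ of $\Delta$ in $\G_\mathcal{O}$ is a lattice with $\G_W\leq\Delta_0\leq\G_\mathcal{O}$, and $\Delta$ is a central extension of $\Delta_0$ by $C$ whose restriction to $\G_W$ splits. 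A short case analysis on $|C|$ then enumerates the candidates: in the index-$3$ case, only $\Delta=\G_1$ or $\Delta=\G_W\times\Z/3\Z$; in the index-$12$ case, $\G_\mathcal{O}$ together with a finite list of ``product'' candidates of the form $\Delta_0\times F$ with $\G_W\leq\Delta_0\leq\G_\mathcal{O}$ and $F$ a finite group of order $[\G_\mathcal{O}:\Delta_0]$.

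Each spurious candidate is eliminated by a profinite invariant. Since $H_1(\G_W,\Z)\cong(\Z/5\Z)^2$, the abelianization criterion of Lemma \ref{l:abel} dispatches any $\Delta=\Delta_0\times F$ whose abelianization differs from that of the target, and where abelianization alone is insufficient we invoke Lemma \ref{l:conj-classes} and count conjugacy classes of subgroups of small index, performed in Magma along the lines of \cite{magma_calcs}. The main obstacle is the bookkeeping for $\wh{\Delta}\cong\wh{\G_\mathcal{O}}$: because $\G_\mathcal{O}/\G_W$ has order $12$, there are several intermediate lattices and several direct-product candidates to refute, but each is a specific finitely presented group and is separated from $\wh{\G_\mathcal{O}}$ by a finite computation of the type already performed in \S \ref{s:others}.
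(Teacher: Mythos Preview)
Your treatment of $\G_1$ is essentially the paper's: pass to the index-$3$ normal subgroup, invoke Theorem~\ref{main_weeks}, use Mostow rigidity to reduce to $\G_1$ versus $\G_W\times\Z/3\Z$, and kill the latter by abelianization.

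For $\G_{\mathcal O}$ you diverge from the paper, and there is a genuine gap in your enumeration. You correctly observe that conjugation gives $\Delta\to\Aut(\G_W)$ with central kernel $C$, so $\Delta$ is a central extension of a lattice $\Delta_0$ with $\G_W\le\Delta_0\le\G_{\mathcal O}$ by $C$, and that this extension splits over $\G_W$. But you then list only direct products $\Delta_0\times F$ as candidates. Splitting over the subgroup $\G_W$ does not force splitting over $\Delta_0$: when $1<|C|<12$ the extension class lives in the kernel of $H^2(\Delta_0,C)\to H^2(\G_W,C)$, and you have not argued that this kernel vanishes. There may be non-product central extensions on your list, and you have neither enumerated nor eliminated them. (They are finitely many and could in principle be dispatched, but that is extra work you have not done, and it is not covered by the Magma calculations in the paper.)

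The paper avoids this entirely by climbing through $\G_1$ rather than $\G_W$. Since $\G_1=[\G_{\mathcal O},\G_{\mathcal O}]$ is the \emph{unique} normal subgroup of index $4$, the profinite hypothesis singles out a copy of $\G_1$ inside $\Delta$ (using the rigidity of $\G_1$ just established). Maximality of $\G_{\mathcal O}$ gives $\Aut(\G_1)\cong\G_{\mathcal O}$, and the conjugation map $c\colon\Delta\to\G_{\mathcal O}$ is then shown to be surjective by checking that the induced map to $\Out(\G_1)=\G_{\mathcal O}/\G_1$ is onto; since $c$ is an isomorphism on the index-$4$ subgroup, it is an isomorphism. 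No case analysis, no additional computation.
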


\begin{proof} 
We deal with $\G_1$ first.  As in the proof of Theorems \ref{t:rigid2} and \ref{t:rigid4}, if $\Delta$ is a residually finite group with $\wh{\Delta}\cong \wh{\G}_1$, then $\Delta$ contains an index $3$ normal subgroup $\Omega$ with $\wh{\Omega}\cong \wh{\G}_W$, and from the profinite rigidity of $\G_W$ and Mostow Rigidity we deduce that $\Delta$ is either an arithmetic Kleinian group containing $\G_W$ as a normal subgroup of index $3$, or else $\Delta\cong \G_W\times \Z/3\Z$.  The latter case can be excluded for this implies that $\Delta$ and $\G_1$ surject $\Z/5\Z\times \Z/5\Z$, which they do not, because as was pointed out in \S \ref{weeks}, the orbifold $\H^3/\G_1$ is obtained from $(3,0)$ Dehn surgery on $5_2$ and so has abelianization $\Z/3\Z$.

Thus we may assume that $\Delta$ is an arithmetic Kleinian group containing $\G_W$.  (We remark that this forces $\Delta$ to contain an element of finite order since $M_W$ is the minimal-volume closed (arithmetic) hyperbolic 3--manifold; and the only possible order for a torsion element is 3, since $\G_W<\Delta$ is a normal subgroup of index $3$.) Because $\G_1^{\ab}\cong \Z/3\Z$, we know that $H_1(\Delta,\Z/2\Z)=0$, and since there is a unique maximal order in the invariant quaternion algebra (as remarked in \S \ref{weeks}) we can assume that $\Delta < \G_1$. But then $[\G_1:\G_W]=[\Delta:\G_W]=3$, so $\Delta=\G_1$ as required. 

We now deal with $\G_\mathcal{O}$. Suppose that $\Delta$ is a finitely generated, residually finite group with $\widehat{\Delta} = \widehat{\G}_{\mathcal{O}}$. The commutator subgroup of $\G_{\mathcal{O}}$ is $\G_1$, and ${\G}_{\mathcal{O}}/\G_1\cong \Z/2\Z\times \Z/2\Z$. Hence there is a unique normal subgroup  of index $4$ in $\widehat{\G}_{\mathcal{O}}$, which we denote by $\Omega$. By the first part of the proof, $\Delta_1:=\Omega\cap \Delta$ is isomorphic to $\G_1= \Omega\cap {\G}_{\mathcal{O}}$. $\G_{\mathcal{O}}$ is maximal in the commensurability class of $\G_1$ (see \cite[Ch 11]{MR}), so by Mostow Rigidity its action by conjugation on $\G_1$ gives an isomorphism $\G_{\mathcal{O}}\cong{\rm{Aut}}(\G_1)$. 

The action of $\Delta$ by conjugation on  $\Delta_1$  defines a map  $c\colon\Delta\to{\rm{Aut}}(\G_1)\cong\G_{\mathcal{O}}$ whose image contains $\G_1$, the group of inner automorphisms. Thus we obtain a map $c'$ from $\Delta$ to the finite group ${\rm{Out}}(\G_1)=\G_{\mathcal{O}}/\G_1$ with kernel $\Delta_1$. The restriction to $\G_{\mathcal{O}}$ of $\widehat{c'}\colon \wh{\Delta}\to {\rm{Out}}(\G_1)=\G_{\mathcal{O}}/\G_1$ is the standard map. In particular it is surjective, so $c\colon\Delta\to\G_{\mathcal{O}}$ is surjective. And since $c$ restricts to an isomorphism between subgroups of index $4$, it is an isomorphism. 
\end{proof}

\begin{remark} 
A slight adjustment to the second half of the above proof shows that if $\Delta$ is a finitely generated, profinitely rigid, centerless group with ${\rm{Out}}(\Delta)$ finite, then ${\rm{Aut}}(\Delta)$ is profinitely rigid.
\end{remark}

\subsection{Additional full-size examples}\label{full_sized}

$\G_W$ denotes the fundamental group of the Weeks manifold.

\begin{theorem}\label{inf_many}
For all integers $r\geq 0$, the groups $\G_W \times \Z^r$ are profinitely rigid.
\end{theorem}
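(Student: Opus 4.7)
The plan is to reduce profinite rigidity of $\G_W\times\Z^r$ to that of $\G_W$ (Theorem~\ref{main_weeks}) by extracting the $\G_W$--factor canonically from the abelianization of $\Delta$, and then showing the resulting extension by $\Z^r$ is a trivial direct product. Let $\Gamma_r:=\G_W\times\Z^r$, and let $\Delta$ be finitely generated and residually finite with $\wh{\Delta}\cong\wh{\Gamma_r}$. By Lemma~\ref{l:abel}, $\Delta^{\ab}\cong\Gamma_r^{\ab}=(\Z/5\Z)^2\times\Z^r$, so the torsion-free quotient of $\Delta^{\ab}$ is canonically $\Z^r$, giving a surjection $\pi\colon\Delta\onto\Z^r$. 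Set $K:=\ker\pi$; the corresponding kernel in $\Gamma_r$ is $\G_W\times\{0\}\cong\G_W$. Functoriality of the torsion-free abelianization on profinite completions identifies the closure $\overline{K}\subset\wh{\Delta}$ with $\wh{\G_W}\times\{0\}\cong\wh{\G_W}$ under $\wh{\Delta}\cong\wh{\Gamma_r}=\wh{\G_W}\times\wh{\Z}^r$. The main task is to promote this to an abstract isomorphism $\wh{K}\cong\wh{\G_W}$, whereupon Theorem~\ref{main_weeks} gives $K\cong\G_W$.

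For that central step I would deploy the machinery of \S\ref{s:rep}. The product $\Gamma_r$ is Galois rigid with the same numerical and arithmetic data as $\G_W$: any Zariski-dense representation $\Gamma_r\to(\P)\SL(2,\C)$ factors through the projection to $\G_W$, since the Zariski-dense image of $\G_W$ has trivial centralizer in $\PSL(2,\C)$ and so the commuting image of $\Z^r$ must be trivial. Corollary~\ref{C1}(iv) in the setup of Example~\ref{ex2} then supplies a Zariski-dense representation $\rho\colon\Delta\to\G_{\mathcal{O}}^1$. Because the composition $\Delta\to\G_{\mathcal{O}}^1\onto\G_{\mathcal{O}}^1/\G_W\cong\Z/3\Z$ factors through $\Delta^{\ab}$, it vanishes on $K$; hence $\rho(K)\subseteq\G_W$, and $\rho(K)$ is infinite (otherwise $\rho(\Delta)$ would be virtually abelian) and normal in $\rho(\Delta)$, hence Zariski dense in $\PSL(2,\C)$. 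A variant of the endgame of Theorem~\ref{main_weeks}, applied to $\rho|_K$ and using the identification inside $\overline{K}\cong\wh{\G_W}$ of the fibered subgroup $\G_6<\G_W$ and its congruence quotients, together with Lemma~\ref{inf_order} and the Hopfian property of finitely generated profinite groups, then upgrades $\rho|_K$ to an isomorphism $K\cong\G_W$; this simultaneously forces the finite generation of $K$ and the coincidence $\wh{K}=\overline{K}$.

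To conclude, since $\G_W$ is centerless, the extension $1\to\G_W\to\Delta\to\Z^r\to 1$ is classified by a homomorphism $\phi\colon\Z^r\to\Out(\G_W)$, and by Mostow Rigidity $\Out(\G_W)=\G_{\mathcal{O}}/\G_W$ is finite. The induced homomorphism $\wh{\Z}^r\to\Out(\wh{\G_W})$ arising from $\wh\Delta\cong\wh{\G_W}\times\wh{\Z}^r$ is trivial because that extension is a direct product; as $\Z^r$ is dense in $\wh{\Z}^r$ and $\Out(\G_W)$ is finite, $\phi$ itself is trivial, so the extension splits with trivial action and $\Delta\cong\G_W\times\Z^r$.

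The principal obstacle is the central step of showing that $K$ is finitely generated and inherits the full profinite topology from $\Delta$---neither property is automatic for kernels of abelian quotients of finitely generated groups, and handling them will require genuine use of the detailed low-index, congruence, and fibration structure of $\G_W$ from \S\S\ref{mainplayers},~\ref{s:weeks_rigidity} in tandem with the representation $\rho$.
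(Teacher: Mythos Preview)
You correctly identify the obstacle, and it is a real one: for the kernel $K=\ker(\Delta\to\Z^r)$ there is no evident reason why $K$ should be finitely generated or why $\Delta$ should induce the full profinite topology on it, and without $\wh{K}\cong\wh{\G_W}$ you cannot invoke Theorem~\ref{main_weeks} for $K$, nor run the low-index/congruence endgame of \S\ref{s:weeks_rigidity} (which at every step compares finite quotients of the unknown group with those of $\G_W$). Your proposed workaround---that the endgame applied to $\rho|_K$ would ``simultaneously force'' finite generation and full profinite topology on $K$---is not justified: those arguments presuppose control of the finite quotients of the group in question, which is exactly what is missing for $K$.

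The paper sidesteps this entirely by reversing the roles of $\G_W$ and $\Z^r$. One does not study $\ker(\Delta\to\Z^r)$; instead one observes that the Zariski-dense representation $\rho\colon\Delta\to L\subset\G_W$ produced by Corollary~\ref{C1} has $\wh\rho$ vanishing on $\{1\}\times\wh{\Z}^r$ (the center), so $\wh\rho$ factors through the first projection to give a continuous epimorphism $\wh{\rho_0}\colon\wh{\G_W}\to\wh{L}$. Now the endgame of Theorem~\ref{main_weeks} can be run on $\wh{\G_W}\to\wh{L}\hookrightarrow\wh{\G_W}$, i.e.\ entirely inside the known group $\G_W$, yielding $L=\G_W$. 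This gives a short exact sequence $1\to\Omega\to\Delta\to\G_W\to 1$ with $\Omega=\Delta\cap(\{1\}\times\wh{\Z}^r)$ central. The splitting is then obtained not via $\Out(\G_W)$ but via the $5$--term exact sequence in homology: Poincar\'e duality gives $H_2(\G_W,\Z)=H^1(\G_W,\Z)=0$, so the sequence collapses to $0\to\Omega\to H_1(\Delta,\Z)\to(\Z/5\Z)^2\to 0$, forcing $\Omega\cong\Z^r$ and splitting the central extension. Your $\Out$ argument for the final step is a reasonable alternative, but the crux is that the paper never needs to analyze $K$ at all.
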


\begin{proof} 
Theorem \ref{main_weeks} shows that we can assume that $r\geq 1$.  Suppose that $\Delta$ is a finitely generated residually finite group with $\wh{\Delta}\cong \wh{\G_W \times \Z^r}\cong\wh{\G}_W \times \wh{\Z}^r$.  Since $\Z^r$ is central, any Zariski-dense representation of $\G_W \times \Z^r$ into $\PSL(2,\C)$ must kill $\Z^r$. (It is important here to work with the centerless $\PSL(2,\C)$.) Hence there is a bijection between $\mathrm{X}_{\mathrm{zar}}(\G_W,\mathbb{C})$ and $\mathrm{X}_{\mathrm{zar}}(\G_W\times \Z^r,\mathbb{C})$. By Corollary \ref{weeks_galois_rigid},  $|\mathrm{X}_{\mathrm{zar}}(\G\times \Z^r,\mathbb{C})|=3$. It follows that $\G_W\times \Z^r$ is Galois rigid, and so by Theorem \ref{T1}, $\Delta$ is Galois rigid. From Galois rigidity, we get an epimorphism $\rho\colon\Delta \rightarrow L$ where $L<\Gamma_{\mathcal{O}^1}$ and $\mathcal{O}$ is a maximal order contained in the invariant quaternion algebra of $\G_W$. Recall that  we can conjugate $\G_W$ into $\Gamma_{\mathcal{O}^1}$, in which case
$[\Gamma_{\mathcal{O}^1}:\G_W]=3$.  By construction, the kernel of $\widehat{\rho}\colon\widehat{\Delta}\to\widehat{L}$ contains $\{1\}\times \widehat{\Z^r}$. Thus $\hat{\rho}$ factors through the projection to the first factor of $\widehat{\G_W}\times \widehat{\Z^r}$ and we obtain a continuous epimorphism $\widehat{\rho_0}\colon\widehat{\G_W}\to \widehat{L}$. It follows that $\wh{L}$ cannot have $\Z/3\Z$ as a quotient, and hence there is no $3$--torsion in $H_1(L,\Z)$. The argument of Lemma \ref{l: reduce_weeks} now applies to show that $L<\G_W$.   

We may now run the argument in the proof of Theorem \ref{main_weeks} on $\wh{\G}_W\rightarrow \wh{L} \hookrightarrow \wh{\G}_W$ to deduce that $L=\G_W$ and $\widehat{\rho_0}$ is an isomorphism. Thus we obtain a short exact sequence
\begin{equation}\label{central}
1\to \Omega \to \Delta \to \G_W\to 1,
\end{equation}
where $\Omega=\Delta \cap (\{1\} \times \widehat{\Z^r})$ is central in $\Delta$. Associated to any short exact sequence of groups $1\to G_1\to G_2\to G_3\to 1$ one has a 5--term exact sequence in homology with $\Z$--coefficients,
\[ H_2(G_3,\Z)\to H_0(G_3, H_1(G_1,\Z)) \to H_1(G_2,\Z) \to H_1(G_3,\Z) \to 0; \]
for central extensions the second term is simply $G_1$. In the case $G_3=\G_W$, from \S \ref{weeks} we have $H_1(\G_W,\Z)=(\Z/5\Z)^2$,  and $H_2(\G_W,\Z)=H^1(\G_W,\Z)=0$ by Poincar\'e duality. Thus the 5--term sequence reduces to a short exact sequence 
\begin{equation}\label{last-eq}
0\to \Omega \to H_1(\Delta,\Z)  \to (\Z/5\Z)^2\to 0,
\end{equation}
where $\Omega$ is torsion-free. But $\widehat{\Delta} = \widehat{\G_W\times \Z^r}$, so $H_1(\Delta,\Z) = (\Z/5\Z)^2 \times \Z^r$, by Lemma  \ref{l:abel}. Thus $\Omega=\Z^r$ and (\ref{last-eq}) splits. It follows that the central extension (\ref{central}) also splits, and therefore $\Delta\cong \G_W \times \Z^r$ as claimed. 
\end{proof}

\bigskip


\noindent{\bf Appendix:}\\[\baselineskip]

Here we prove Lemma \ref{L4}, whose statement we repeat for the convenience of the reader.

\medskip

\noindent{\bf Lemma A.1.}~{\em  If $K$, $K'$ are number fields and $\tau_f\colon V_{K'}^f \to V_K^f$ is an injective map with $K'_w \cong K_{\tau_f(w)}$ for all $w \in V_{K'}^f$, then $\tau_f$ is a bijection. }

\begin{proof}
As $K'_w \cong K_{\tau_f(w)}$ for all $w \in V_{K'}^f$, any prime that splits completely in $K$ must also split completely in $K'$. By \cite[p.~108, Cor to Thm 31]{Mar} and \cite[p.~164, Thm 5.5]{Jan}, we see that $K'$ is a subfield of the Galois closure $K_{\mathrm{gal}}$ of $K$ over $\mathbb{Q}$. Let $G = \mathrm{Gal}(K_{\mathrm{gal}}/\mathbb{Q})$, and let $H$ and $H'$ be the subgroups of $G$ that fix $K$ and $K'$ respectively. Let $\chi, \chi'$ be the permutation characters for the $G$ action on $G/H$ and $G/H'$. By \cite[p.~128, Prop 2.7]{Jan}, if $p$ is a rational prime that does not ramify in $K_{\mathrm{gal}}$ and $g \in G$ is the Frobenius automorphism of any prime of $K_{\mathrm{gal}}$ over $p$, then the inertia degrees of the primes over $p$ in $K, K'$ are given by the orders of the orbits of $\langle g \rangle$ acting on $G/H$ and $G/H'$. Restricting $\tau_f$ to $V_{K'}^p$, we get an injection $G/H' \to G/H$ of $\langle g \rangle$--sets. Thus $\chi'(g) \leq \chi(g)
 $. By Chebotarev's density theorem, each $g \in G$ is the Frobenius automorphism for infinitely many unramified primes of $K_{\mathrm{gal}}$. Therefore, 
\begin{equation}\label{Eq:CharEq0}
\chi'(g) \leq \chi(g)
\end{equation}
 for every $g \in G$. For a $G$--set $Z$ with permutation character $\chi_Z$, the number of orbits of the action on $Z$ is $\frac{1}{\abs{G}} \sum_g \chi_Z(g)$. Hence
\[ \frac{1}{\abs{G}} \sum_g \chi(g) = \frac{1}{\abs{G}} \sum_g \chi'(g) = 1 \] 
and $\sum_g (\chi(g) - \chi'(g)) = 0$. From (\ref{Eq:CharEq0}) we deduce $\chi = \chi'$. Thus $\abs{G/H} = \abs{G/H'}$ and $K,K'$ have the same degree over $\Q$, so $\tau_f$ is a bijection.
\end{proof}


\newpage 

\begin{center}
\textbf{Supplemental Magma Code}
\end{center}

\smallskip

\smallskip

\smallskip

\smallskip

\smallskip

\smallskip

\setcounter{section}{0}

\section{Introduction}

This document is a supplement to our paper \cite{BMRS}. It contains the Magma codes used to verify several group theoretic facts needed in the proofs of the main results of \cite{BMRS}. Before giving the Magma codes and their output, we recall some notation from \cite{BMRS}.

In what follows, $\Lambda_0$ is the group of isometries of hyperbolic 3--space generated by reflections in the faces of the tetrahedron $T_0 = T[3,2,2;6,2,3]$. The index $2$ subgroup of $\Lambda_0$ formed by the orientation-preserving isometries is $\Gamma_0 \cong \mathrm{PGL}(2,\Z[\omega])$. 

The group $\Lambda_1$ is generated by reflections in the face of the tetrahedron $T_1=T[3,2,2;3,3,3]$; this is an index $2$ subgroup of $\Lambda_0$. The index $2$ subgroup of $\Lambda_1$ formed by the orientation-preserving isometries is $\Gamma =\mathrm{PSL}(2,\Z[\omega])$. 

The group $\Lambda_0$ has a third subgroup of index $2$ that will be denoted by $\Lambda_2$. One can check that $\Lambda_2$ contains $\G$ as a subgroup of index $2$. 

\begin{equation}\label{Eq:Natural}
\begin{tikzcd}
& & \Lambda_0 \arrow[rrdd,"2",dash] \arrow[lldd,"2"',dash] \arrow[dd,"2"',dash] & & \\ & & & & \\ \Lambda_1 \arrow[rrdd,"2"', dash] & & \Gamma_0 \arrow[dd,"2"', dash] &  &\Lambda_2 \arrow[lldd,"2", dash] \\ & & & & \\ & & \Gamma & &
\end{tikzcd}
\end{equation}

\noindent The groups $\Lambda_0$, $\Gamma_0$, $\Lambda_1$, $\Gamma$, and $\Lambda_2$ can be presented as follows:
\begin{align*}
\Lambda_0 &= \innp{ x,y,z,w~\mid~ x^2 = y^2 = z^3 = w^2 = (xy)^3 = (xz)^2 = (xw)^2 = (yz)^3 = (yw)^2 = (zw)^6 = 1} \\
\G_0 &= \innp{ x,y,z ~\mid~ x^3= y^2= z^2 = (yz)^6 = (zx^{-1})^2 = (yx^{-1})^3 =1} \\
\Lambda_1 &= \innp{ x,y,z,w ~\mid~ x^2 = y^2 = z^2 = w^2 = (xy)^2 = (xz)^2 = (xw)^3 = (yz)^3 = (yw)^3 = (zw)^3 =1} \\
\G &= \innp{ x,y,z ~\mid~ x^3 = y^2 = z^2 = (yz)^3 = (zx^{-1})^3 = (yx^{-1})^3 = 1} \\
\Lambda_2 &= \innp{x,y,z ~\mid~ y^2 = x^2 = z^3 = zyz^{-1}y = (zx)^3 = (xy)^6 = 1}.
\end{align*}

\noindent Finally, $\Gamma_W$ is the fundamental group of the Weeks manifold; it has the following presentation

\[ \G_W=\innp{ a,b~\mid~ababa^{-1}b^2a^{-1}b = abab^{-1}a^2b^{-1}ab = 1}. \]

\noindent We remind the reader that Magma uses the notation $[m,n,0]$ to declare that the abelianization of a group is $\Z/m\Z\times\Z/n\Z\times \Z$.

\section{The proof of \cite[Lemma 7.2]{BMRS}}\label{magma_bianchi}

What follows is the Magma routine to complete the proof of \cite[Lemma 7.2]{BMRS}. Here, $\tt{g}$ is the group $\Gamma$, presented as in (\ref{Eq:Natural}). The routine computes subgroups of index $n$ less than or equal to $12$, prints the number of such and computes their abelianizations. 

\medskip

\begin{verbatim}
> g<a,b,c>:=Group<a,b,c|a^3,b^2,c^2,(b*c)^3,(c*a^-1)^3,(b*a^-1)^3>;
> print AbelianQuotientInvariants(g);
[ 3 ]
> l:=LowIndexSubgroups(g,<3,3>); 
> print #l;
1
> print AbelianQuotientInvariants(l[1]);
[ 2, 2 ]
> l:=LowIndexSubgroups(g,<4,4>); 
> print #l;                             
1
> print AbelianQuotientInvariants(l[1]);
[ 3, 3 ]
> l:=LowIndexSubgroups(g,<5,5>); 
> print #l;                             
1
> print AbelianQuotientInvariants(l[1]);
[ 3, 3 ]
> l:=LowIndexSubgroups(g,<6,6>); 
> print #l;                             
2
> print AbelianQuotientInvariants(l[1]);
[ 2, 0 ]
> print AbelianQuotientInvariants(l[2]);
[ 6 ]
> l:=LowIndexSubgroups(g,<7,7>); 
> print #l;                             
4
> print AbelianQuotientInvariants(l[1]);
[ 6 ]
> print AbelianQuotientInvariants(l[2]);
[ 6 ]
> print AbelianQuotientInvariants(l[3]);
[ 6 ]
> print AbelianQuotientInvariants(l[4]);
[ 6 ]
> l:=LowIndexSubgroups(g,<8,8>);        
> print #l;                             
2
> print AbelianQuotientInvariants(l[1]);
[ 3, 3 ]
> print AbelianQuotientInvariants(l[2]);
[ 3, 3 ]
> l:=LowIndexSubgroups(g,<9,9>); 
> print #l;                             
1
> print AbelianQuotientInvariants(l[1]);
[ 2, 2 ]
> l:=LowIndexSubgroups(g,<10,10>); 
> print #l;                             
1
> print AbelianQuotientInvariants(l[1]);
[ 6, 0 ]
> l:=LowIndexSubgroups(g,<11,11>); 
> print #l;                             
0
> l:=LowIndexSubgroups(g,<12,12>); 
> print #l;                       
7
> print AbelianQuotientInvariants(l[1]);
[ 0 ]
> print AbelianQuotientInvariants(l[2]);
[ 5, 0 ]
> print AbelianQuotientInvariants(l[3]);
[ 3, 9 ]
> print AbelianQuotientInvariants(l[4]);
[ 3, 9 ]
> print AbelianQuotientInvariants(l[5]);
[ 3, 3, 3 ]
> print AbelianQuotientInvariants(l[6]);
[ 2, 0 ]
> print AbelianQuotientInvariants(l[7]);
[ 3, 3, 3 ]
\end{verbatim}


\section{The proof of \cite[Proposition 7.3]{BMRS}}\label{magma_sister5}

What follows is the Magma routine to complete the proof of \cite[Proposition 7.3]{BMRS}. This runs through all of the index $5$ subgroups in the groups under consideration and calculates their abelianizations. We again use the presentation of $\G$ from (\ref{Eq:Natural}). We
consider those subgroups listed in the previous Magma routine that have first Betti number $b_1=1$, except that we ignore the index $12$ subgroup of $\G$ with abelianization $\Z$, because this is the fundamental group of the figure-eight knot complement, which can be eliminated from consideration because its unique $5$--fold cyclic cover is a once-punctured torus bundle over the circle with $b_1=1$. The last group whose index $5$ subgroups are analysed below is the fundamental group of the sister of the figure-eight knot complement.

\medskip

\begin{verbatim}
g<a,b,c>:=Group<a,b,c|a^3,b^2,c^2,(b*c)^3,(c*a^-1)^3,(b*a^-1)^3>;
> l:=LowIndexSubgroups(g,<6,6>); 
> print #l;
2
> print AbelianQuotientInvariants(l[1]);
[ 2, 0 ]
> h:=Rewrite(g,l[1]); 
> k:=LowIndexSubgroups(h,<5,5>); 
> print #k;
4
> print AbelianQuotientInvariants(k[1]);
[ 2, 0, 0 ]
> print AbelianQuotientInvariants(k[2]);
[ 2, 0 ]
> print AbelianQuotientInvariants(k[3]);
[ 2, 0, 0, 0 ]
> print AbelianQuotientInvariants(k[4]);
[ 2, 0, 0 ]
> l:=LowIndexSubgroups(g,<10,10>);
> print #l;
1
> print AbelianQuotientInvariants(l[1]);
[ 6, 0 ]
> h:=Rewrite(g,l[1]); 
> k:=LowIndexSubgroups(h,<5,5>); 
> print #k;                             
6
> print AbelianQuotientInvariants(k[1]);
[ 3, 6, 0 ]
> print AbelianQuotientInvariants(k[2]);
[ 3, 6, 0 ]
> print AbelianQuotientInvariants(k[3]);  
[ 3, 6, 0 ]
> print AbelianQuotientInvariants(k[4]);  
[ 6, 0 ]
> print AbelianQuotientInvariants(k[5]);
[ 2, 6, 0, 0 ]
> print AbelianQuotientInvariants(k[6]);
[ 3, 6, 0, 0 ]
> l:=LowIndexSubgroups(g,<12,12>); 
> print #l;
7
> print AbelianQuotientInvariants(l[6]);
[ 2, 0 ]
> h:=Rewrite(g,l[6]); 
> k:=LowIndexSubgroups(h,<5,5>); 
> print #k;                             
4
> print AbelianQuotientInvariants(k[1]);
[ 2, 0, 0, 0 ]
> print AbelianQuotientInvariants(k[2]);
[ 2, 0, 0, 0 ]
> print AbelianQuotientInvariants(k[3]);
[ 2, 0, 0, 0 ]
> print AbelianQuotientInvariants(k[4]);  
[ 2, 0 ]
> print AbelianQuotientInvariants(l[2]);
[ 5, 0 ]
> h:=Rewrite(g,l[2]); 
> k:=LowIndexSubgroups(h,<5,5>); 
> print #k;                             
8
> print AbelianQuotientInvariants(k[1]);
[ 0, 0, 0, 0, 0 ]
> print AbelianQuotientInvariants(k[2]);
[ 5, 5, 0 ]
> print AbelianQuotientInvariants(k[3]);
[ 5, 5, 0 ]
> print AbelianQuotientInvariants(k[4]);  
[ 5, 5, 0 ]
> print AbelianQuotientInvariants(k[5]);
[ 5, 25, 0 ]
> print AbelianQuotientInvariants(k[6]);
[ 0, 0, 0 ]
> print AbelianQuotientInvariants(k[7]);
[ 0, 0, 0 ]
> print AbelianQuotientInvariants(k[8]);
[ 5, 5, 0 ]
\end{verbatim}


\section{The proof of \cite[Theorem 8.2]{BMRS}}\label{magma_lam2}

The following routine is used in the proof of \cite[Theorem 8.2]{BMRS}, where we promote the profinite rigidity of $\G$ to profinite rigidity for each of its index $2$ extensions,  $\G_0$, $\Lambda_1$, $\Lambda_2$ and $\G \times \Z/2\Z$. The group $\tt{g}$ below is $\Lambda_0$, presented as in (\ref{Eq:Natural}), while $\tt{z}$ is the group $\Lambda_2$, presented by applying ``Rewrite" to the group $\tt{l}[2]$, and $\tt{bia}$ is the group $\G \times \Z/2\Z$. 

\medskip

\begin{verbatim}
> g<r1,r2,r3,r4>:=Group<r1,r2,r3,r4|r1^2,r2^2,r3^2,r4^2,(r1*r2)^3,(r1*r3)^2,(r1*r4)^2,
> (r2*r3)^3,(r2*r4)^2,(r3*r4)^6>; 
> print AbelianQuotientInvariants(g);   
[ 2, 2 ]
> l:=LowIndexSubgroups(g,<2,2>); 
> print AbelianQuotientInvariants(l[1]);
[ 2 ]
> print AbelianQuotientInvariants(l[2]); 
[ 6 ]
> print AbelianQuotientInvariants(l[3]);
[ 2 ]
> print l[1];
Finitely presented group on 3 generators, Index in group g is 2,
Generators as words in group g
    $.1 = r2 * r1, $.2 = r3 * r1, $.3 = r4 * r1
> print l[2];
Finitely presented group on 3 generators, Index in group g is 2,
Generators as words in group g
    $.1 = r2 * r1, $.2 = r3 * r1, $.3 = r4
> print l[3];
Finitely presented group on 4 generators, Index in group g is 2, 
Generators as words in group g
    $.1 = r1, $.2 = r2, $.3 = r3, $.4 = r4 * r3 * r4
> z:=Rewrite(g,l[2]); print z;
Finitely presented group z on 3 generators, Generators as words in group g
    z.1 = r2 * r1, z.2 = r3 * r1, z.3 = r4
Relations
    z.3^2 = Id(z), z.2^2 = Id(z), z.1^3 = Id(z), z.1 * z.3 * z.1^-1 * z.3 = Id(z)
    (z.1 * z.2)^3 = Id(z), (z.2 * z.3)^6 = Id(z)    
> bia<a,b,c,t>:=Group<a,b,c,t|(t,a),(t,b),(t,c),t^2,a^3,b^2,c^2,(b*c)^3,
(c*a^-1)^3,(b*a^-1)^3>; 
> q:=LowIndexSubgroups(z,<7,7>); 
> print #q;
0
> qq:=LowIndexSubgroups(bia,<7,7>); 
> print #qq;                       
4    
\end{verbatim}   

\medskip

\noindent The proof of \cite[Theorem 8.2]{BMRS} also relies on the following routine which distinguishes $\widehat{\G}_0$ from $\widehat{\Lambda}_1$ by counting conjugacy classes of index $8$ subgroups.

\medskip

\begin{verbatim}
> g<x,y,z,w>:=Group<x,y,z,w|x^2,y^2,z^2,w^2,(x*y)^2,(x*z)^2,(x*w)^3,
(y*z)^3,(y*w)^3,(z*w)^3>; l:=LowIndexSubgroups(g,<8,8>); 
print #l; 
1
> h<a,b,c>:=Group<a,b,c|a^3,b^2,c^2,(b*c)^6,(c*a^-1)^2,(b*a^-1)^3>;
> l:=LowIndexSubgroups(h,<8,8>); 
print #l;
3
\end{verbatim}

\section{The proof of \cite[Theorem 8.3]{BMRS}} 

\cite[Theorem 8.3]{BMRS} relies on the following Magma routine which shows that $\tt{g}=\Lambda_0$ has fewer index $8$ subgroups, up to conjugacy, than ${\tt{bia}} =\G_0\times \Z/2\Z$.

\medskip

\begin{verbatim}
> g<r1,r2,r3,r4>:=Group<r1,r2,r3,r4|r1^2,r2^2,r3^2,r4^2,(r1*r2)^3,(r1*r3)^2,
(r1*r4)^2, (r2*r3)^3,(r2*r4)^2,(r3*r4)^6>; bia<a,b,c,t>:=Group<a,b,c,t|(t,a),
(t,b),(t,c),t^2,a^3,b^2,c^2,(b*c)^6,(c*a^-1)^2, (b*a^-1)^3>;
> l:=LowIndexSubgroups(g,<8,8>);  
k:=LowIndexSubgroups(bia,<8,8>); 
print #l;                       
3
> print #k;                       
5
\end{verbatim}

\section{Calculations for $\G_W$} \label{magma_weeks} 

This section contains the Magma calculations used in \cite[\S 9]{BMRS} in proving that the fundamental group of the Weeks manifold is profinitely rigid; see in particular the proof of \cite[Lemma 9.3]{BMRS}. The group ${\tt{g}}$ in this routine is $\Gamma_W$, given by the presentation from (\ref{Eq:Natural}).

\medskip

\begin{verbatim}
> g<a,b>:=Group<a,b| a*b*a*b*a^-1*b^2*a^-1*b,a*b*a*b^-1*a^2*b^-1*a*b>;      
> print AbelianQuotientInvariants(g);
[ 5, 5 ]
> l:=LowIndexSubgroups(g,<24,24>); 
> print #l;             
11
> print AbelianQuotientInvariants(l[1]);
[ 5, 55, 0 ]
> f,i:=CosetAction(g,l[1]); 
> print Order(i);
6072
> IsSimple(i);
true
> print AbelianQuotientInvariants(l[2]);
[ 2, 2, 2, 10, 110 ]
> f,i:=CosetAction(g,l[2]); 
> print Order(i);                       
6072
> IsSimple(i);                          
true
> print AbelianQuotientInvariants(l[3]);
[ 5, 30, 0 ]
> f,i:=CosetAction(g,l[3]); 
> print Order(i);                       
2204496
> print AbelianQuotientInvariants(l[4]);
[ 90, 90 ]
> f,i:=CosetAction(g,l[4]); 
> print Order(i);                       
2204496
> print AbelianQuotientInvariants(l[5]);
[ 5, 30, 0 ]
> f,i:=CosetAction(g,l[5]); 
> print Order(i);                       
2204496
> print AbelianQuotientInvariants(l[6]);
[ 2, 2, 2, 70, 70 ]
> f,i:=CosetAction(g,l[6]); 
> print Order(i);                       
168
> print AbelianQuotientInvariants(l[7]);  
[ 90, 90 ]
> f,i:=CosetAction(g,l[7]); 
> print Order(i);                       
2204496
> print AbelianQuotientInvariants(l[8]);
[ 5, 5, 10 ]
> f,i:=CosetAction(g,l[8]); 
> print Order(i);
1320
> print AbelianQuotientInvariants(l[9]);
[ 5, 30 ]
> f,i:=CosetAction(g,l[9]); 
> print Order(i);                       
310224200866619719680000
> print AbelianQuotientInvariants(l[10]); 
> f,i:=CosetAction(g,l[10]); 
> print Order(i);           
310224200866619719680000
> print AbelianQuotientInvariants(l[11]);
[ 5, 30 ]
> f,i:=CosetAction(g,l[11]); 
> print Order(i);                        
310224200866619719680000
\end{verbatim}

\medskip

\noindent The following enumeration of index $8$ subgroups of $\G_W$ is also required in the proof of \cite[Lemma 9.3]{BMRS}.  Again $\tt{g}$ denotes $\G_W$.

\medskip

\begin{verbatim}
> g<a,b>:=Group<a,b| a*b*a*b*a^-1*b^2*a^-1*b,a*b*a*b^-1*a^2*b^-1*a*b>;                                                                                                                                  
> print AbelianQuotientInvariants(g);
[ 5, 5 ]
> l:=LowIndexSubgroups(g,<8,8>); print #l;
1
> print AbelianQuotientInvariants(l[1]);
[ 5, 30 ]
\end{verbatim}

\medskip


\end{document}